\documentclass[10pt,one side,emlines]{amsart}
\usepackage{amssymb,latexsym,xy,eucal,mathrsfs}
\usepackage{tikz}
\usetikzlibrary{arrows.meta}
\usepackage{graphicx, graphics}
\usepackage{caption}
\usepackage{wrapfig}
\newtheorem{lem}{Lemma}[section]
\newtheorem{lemma}[lem]{Lemma}
\newtheorem{theorem}[lem]{Theorem}
\newtheorem*{theorem*}{Main Theorem}
\newtheorem{thm}[lem]{Theorem}

\newtheorem{corollary}[lem]{Corollary}

\theoremstyle{definition}

\newtheorem*{cthm*}{Conjecture}
\newtheorem{rem}[lem]{Remark}

\newtheorem{defn}[lem]{Definition}

\begin{document}
	\baselineskip 15truept
	
	\subjclass[2020]{Primary 05C15, Secondary 06A12, 13A70} %
	
	\title{Complemented zero-divisor graph of posets}
	\maketitle
	\markboth{Anagha Khiste, Ganesh Tarte and Vinayak Joshi}{Complemented zero-divisor graph of posets}\begin{center}\begin{large} $^\text{a}$Anagha Khiste, $^\text{b}$Ganesh Tarte and $^\text{c}$Vinayak Joshi \end{large}\begin{small}\vskip.1in$^\text{a}$\emph{Department of Applied Science and Humanities, Indian Institute of Information Technology, Pune - 410507. }\\$^\text{b}$\emph{Department of Applied Sciences and Humanities, Pimpri Chinchwad College of Engineering, Pune - 411044.}\\$^\text{c}$\emph{Department of Mathematics, Savitribai Phule Pune University, Pune - 411007, Maharashtra, India.}\\\emph{E-mail: avanikhiste@gmail.com, ganesh.tarte@pccoepune.org, vvjoshi@unipune.ac.in }\end{small}\end{center}

\begin{abstract}
	In this paper, we derive a set of equivalent conditions for the zero-divisor graph $\Gamma(Q)$ of a poset $Q$ with $0$ to be complemented, characterizing it in terms of quasi-complemented posets. Furthermore, we prove that the notions of a complemented zero-divisor graph and a uniquely complemented zero-divisor graph coincide for any poset $Q$ with $0$. In addition, we provide both algebraic and topological characterizations for $\Gamma(Q)$ to be a complemented graph. In the final section, we apply these characterizations to the zero-divisor graphs of a reduced (multiplicative) semigroup $S$ with $0$ and the comaximal (ideal) graph  of an Artinian ring $R$, and the nonzero component union graph $\mathbb{UG}(\mathbb{V})$ of a finite-dimensional vector space $\mathbb{V}$ over a field $\mathbb{F}$.
\end{abstract}

\maketitle \noindent{ \small \textbf{Keywords}: Comaximal (ideal) graph,  complemented graph,   minimal prime ideal, nonzero component union graph, quasi-complemented poset and zero-divisor
	graph· 
} \maketitle

\section{Introduction}
The concept of the zero-divisor graph of a commutative ring with identity was introduced by Beck \cite{B} as a tool to explore the interplay between ring-theoretic and graph-theoretic properties. In recent years, zero-divisor graphs have been extensively studied across various algebraic structures, as seen in the works of Anderson and Naseer \cite{AN}, Anderson and Livingston \cite{AL}, and DeMeyer, McKenzie, and Schneider \cite{DMS}. These investigations have also extended into ordered structures, with notable contributions from Hala\v{s} and Jukl \cite{HJ}, Joshi et al. \cite{VK, JA 1}, Nimbhorkar et al. \cite{NWD}, and Lu and Wu \cite{LW}.

In \cite[Theorem 3.5]{ALS}, Anderson et al. proved that for a reduced ring $R$, the zero-divisor graph is uniquely complemented if and only if it is complemented, if and only if the total ring of quotients $T(R)$ is von Neumann regular. It is well known that the zero-divisor graph of a Boolean ring is isomorphic to the zero-divisor graph of the Boolean lattice derived from it. LaGrange \cite{L} further investigated the complementedness of zero-divisor graphs in Boolean rings, and this study was later extended to Boolean posets by Joshi and Khiste \cite{JA 2}.

In 2012, Visweswaran \cite{SV} examined the complementedness of the complement of the zero-divisor graph of a commutative ring with identity. More recently, Bennis et al. \cite{BAL} explored complemented and uniquely complemented properties for extended zero-divisor graphs of commutative rings, while Bender et al. \cite{CPRL} analyzed similar properties for zero-divisor graphs associated with finite commutative semigroups, particularly in relation to the clique number.

In this paper, we investigate a special class of posets, denoted by $\mathbb{P}^{\ell}_{MFP}$, consisting of posets with the least element $0$ in which every maximal filter is prime, and every maximal $\ell$-filter is maximal among all filters. It is known that a lattice $Q$ belongs to $\mathbb{P}^{\ell}_{MFP}$ if and only if it is a $0$-distributive lattice. This class has been well studied by Mundlik et al. \cite{MJH}.

The notion of quasi-complemented lattices was originally introduced by Grillet and Varlet \cite{GV}, and further investigated by Cornish \cite{C}. Jayaram \cite{Jr} studied quasi-complemented semilattices. More recently, Mundlik et al. \cite{MJH} extended these ideas to define and explore quasi-complemented and weakly quasi-complemented posets. We extend these developments by examining posets $Q \in \mathbb{P}^{\ell}_{MFP}$ that satisfy the annihilator condition. 

Specifically, we characterize the structure of the zero-divisor graph $\Gamma(Q)$ of a poset $Q$ with $0$ and present a set of equivalent conditions under which it becomes a complemented graph, using the framework of quasi-complemented posets. We prove that for any poset $Q$ with $0$, the notions of a complemented zero-divisor graph and a uniquely complemented zero-divisor graph are equivalent. In addition, we provide both algebraic and topological characterizations for $\Gamma(Q)$ to be complemented. We prove that $\Gamma(Q)$ is complemented (respectively, uniquely complemented) if and only if $Q$ is  quasi-complemented.

Finally, we apply our results to various structures, including the zero-divisor graphs of reduced semigroups with $0$, the comaximal ideal graph $\mathbb{CIG}(R)$ and the comaximal graph $\mathbb{CG}(R)$ of an Artinian ring $R$, and the nonzero component union graph $\mathbb{UG}(\mathbb{V})$ of a finite-dimensional vector space $\mathbb{V}$ over a field $\mathbb{F}$.

\section{Preliminary concepts and definitions}

Let $Q$ be a poset and $A \subseteq Q$. The set $A^u = \{ x \in Q~|~ a \leq x$ for every $a \in A\}$ is called the \textit{upper cone} of $A$. Dually, we have the concept of the \textit{lower cone} $A^{ \ell }$ of $A$. $A^{u \ell }$ shall mean $\{A^u\}^{ \ell }$ and $A^{ \ell u}$ shall mean $\{A^{ \ell }\}^u$. We use $a^u$ instead of $\{a\}^u$ and dually. We note that $A \subseteq A^{u \ell }$ and $A \subseteq A^{ \ell u}$. If $ A \subseteq B$ then $A^{ \ell } \supseteq B^{ \ell }$ and $A^u \supseteq B^u$. Moreover, $ A^{\ell u \ell } = A^{ \ell }$, $A^{ u \ell u} = A^u$, $\{a^u\}^{ \ell }=a^{ \ell }$, and $\{a^\ell\}^u=a^u$.

Given a poset $Q$ and a non-empty subset $I$ of $Q$, it is said to be a \textit{semi-ideal} if $y\in I$ and $x\leq y$ imply $x\in I$.
A non-empty subset $I$  of $Q$ is said an \textit{ideal} if $a, b\in I$ yields $\{a, b\}^{u\ell} \subseteq I$ (\textit{see} Hala\v s \cite{H}). The set of all ideals of $Q$ is denoted by $Id(Q)$. Dually, we have the concept of a \textit{semi-filter} and \textit{filter} respectively.
An ideal $I$ is said to be a \textit{principal ideal generated} by $a \in Q$ if $I = (a] = \{x \in Q ~|~ x \leq a\}$. Dually, we have the \textit{principal filter} $F$ generated by $a \in Q$ if $F =[a)=\{x \in Q ~|~ a \leq x\}$. An ideal $I$ is called a \textit{$u$-ideal} if, for all $x, y \in I$, $\{x, y\}^{u} \cap I \neq \emptyset$. Note that every principal ideal is a $u$-ideal but not conversely. Dually, we have the concepts of an \textit{$\ell$-filter}.

For a non-empty subset $A \subseteq Q$, the \textit{annihilator} of $A$ is denoted by $A^{\perp }= \{y \in Q | \{x, y\}^\ell = \{0\}, \forall x \in A\}$. In particular, if $A = \{x\}$, then the annihilator of $x$ is $x^{\perp}= \{y \in Q | \{x, y\}^\ell = \{0\}\}$ (\textit{see} Joshi and Waphare \cite{JW}). A proper ideal $I$ of $Q$ is called \textit{prime} if for $x, y \in Q$, $\{x, y\}^{\ell} \subseteq I$ implies $x \in I$ or $y \in I$. Dually, we have the concept of a \textit{prime filter}. The set of all prime ideals of a poset $Q$ is denoted by $Spec(Q)$. Minimal elements of the poset of all prime ideals (prime $u$-ideals) of $Q$ will be called \textit{minimal prime ideals (minimal prime $u$-ideals)} of $Q$. The set of all minimal prime ideals of a poset $Q$ with $0$ is denoted by $Min(Q)$. A proper ideal ($u$-ideal) $I$ is a \textit{maximal ideal ($u$-ideal)} if there is no proper ideal ($u$-ideal) $J$ such that $I\subsetneqq J\subsetneqq Q$. Dually, we have the concept of a \textit{maximal filter (maximal $\ell$-filter)}.

For $a\in Q$, we write $V (a) = \{P \in Spec(Q) ~|~ a \in P\}$ and $D(a) = \{P \in Spec(Q) ~|~ a\notin P\} = Spec(Q) \setminus V (a)$. We set $V'(a)=V(a)\cap Min(Q)$ and $D'(a)=D(a)\cap Min(Q)$. Then the sets $V(I)= \bigcap_{a\in I} V(a)$, where $I$ is an ideal of $Q$, satisfy the axioms for the closed sets of a topology on $Spec(Q)$, called the \textit{Zariski topology}. Given a subset $\mu$ of a topological space $Spec(Q)$,  the \textit{interior} of $\mu$ is denoted by $int(\mu)$ and defined as the union of all open sets contained in $\mu$ and the \textit{closure} of $\mu$ is denoted by $\overline{\mu}$ and defined as the intersection of all closed sets containing $\mu$, equivalently the closure of $\mu$ is the set of all points $x\in Spec(Q)$ such that every neighborhood of $x$ intersects $\mu$. We also consider $Min(Q)$ as a subspace of $Spec(Q)$.

We denote by $\mathbb{P}_{MFP}$ the class of posets having the least element $0$ in which every maximal filter is prime, and by $\mathbb{P}^{\ell}_{MFP}$ the subclass of $\mathbb{P}_{MFP}$ consisting of all posets having the least element $0$ and with the property that every maximal $\ell$-filter (i.e., maximal among all $\ell$-filters) is a maximal filter (i.e., maximal among all filters). Hence every maximal $\ell$-filter $F$ of a poset $Q$ in $\mathbb{P}^{\ell}_{MFP}$ is also prime. Dually, we have the class $\mathbb{P}^{u}_{MIP}$ (\textit{see} Mundlik et al. \cite{MJH}).

The concept of a quasi-complemented lattice was introduced and studied by Grillet and Varlet \cite{GV}, which was extended by Mundlik et al. \cite{MJH} to posets.

A poset $Q$ with $0$ is called \textit{quasi-complemented} if for any $x\in Q$, there exists $y$ distinct from $x$ such that $\{x,y\}^{\ell} = \{0\}$ and $ \bigg((x] \vee (y]\bigg)^{\perp } = \{0\}$. A poset $Q$ is said to be \textit{weakly quasi-complemented} if for any $x \in Q$, there exist $y_1, y_2,\cdots, y_n$ distinct from $x$ such that $x^{\perp \perp} = \bigcap_{i=1}^{n}y_{i}^{\perp }$.

One can see that a modular, non-distributive lattice $M_3$ is quasi-complemented. However, $M_3 \notin \mathbb{P}^{\ell}_{MFP}$. Now, we provide an example of a lattice $L\in \mathbb{P}^{\ell}_{MFP}$ but not quasi-complemented. Consider the set $L=\{X\subseteq \mathbb{N}~:~|X|< \infty\}$. Then $L$ is a distributive lattice and hence $L\in \mathbb{P}^{\ell}_{MFP}$. However, $L$ is not quasi-complemented. Thus, the class of posets $\mathbb{P}^{\ell}_{MFP}$ and quasi-complemented are not contained in one another. Further, the class of poset $\mathbb{P}^{\ell}_{MFP}$ is a subclass of $0$-distributive poset, and these two classes coincide if $P$ happens to be a lattice; \textit{see} Theorem \ref{2.3}.

A poset $Q$ with the least element $0$ is said to be \textit{$0$-distributive} if for $x, y, z \in Q$, $\{x, y\}^{\ell} = \{x,z\}^{\ell} = \{0\}$ imply $\{x, \{y,z\}^u\}^{\ell} = \{0\}$ (\textit{see} Joshi and Waphare \cite{JW}). Let $Q$ be a poset with $0$. An element $x^* \in Q$ is said to be the \textit{pseudocomplement} of $x \in Q$, if $\{x, x^*\}^{\ell} = \{0\}$ and for $y \in Q$, $\{x, y\}^{\ell} =\{0\}$ implies $y\leq x^*$. A poset $Q$ with $0$ is called \textit{pseudocomplemented} if each element of $Q$ has the pseudocomplement (\textit{see} Venkatanarasimhan \cite{V} and Hala\v s \cite{H}).

Let $Q$ be a poset with $0$. An element $x \in Q$ is called a \textit{zero-divisor} if $\{x,y\}^\ell=\{0\}$ for some $y \in Q\setminus \{0\}$. Let $Z(Q)$ denote the set of all zero-divisors of $Q$. For a poset $Q$ with $0$, we associate an undirected graph called the \textit{zero-divisor graph of $Q$}, denoted by $\Gamma(Q)$, whose vertex set $V(\Gamma(Q))$ consists of the non-zero zero-divisors of $Q$ and two distinct vertices $x,y$ are adjacent if and only if $\{x, y\}^ {\ell} = \{0\}$; (\textit{see} Lu and Wu \cite{LW} and Joshi \cite{J}).

For distinct vertices $a$ and $b$ in a graph $G$, we say that $b$ is a \textit{complement} of $a$, written $a\bot b$ if $a$ and $b$ are adjacent and there is no vertex $c$ in $G$ which is adjacent to both $a$ and $b$. A graph $G$ is called \textit{complemented} if for each vertex $a$ of $G$, there is a vertex $b$ of $G$ such that $a\bot b$, and that $G$ is an \textit{uniquely complemented} if $G$ is complemented and whenever $a\bot b$ and $a\bot c$, then $b$ and $c$ are adjacent to exactly the same vertices; {\it see} LaGrange\cite{L}.

 Let $G=(V,E)$ be a simple graph. The \textit{complement of $G$} denoted by $G^c$ is defined as a graph with $V(G^c)= V(G)=V$ and two distinct elements $x, y\in V$ are joined by an edge in $G^c$ if and only if there is no edge of $G$ joining $x$ and $y$; see West \cite{W}.

For undefined concepts in lattices and graphs, {\it see} \cite{G, W} respectively.

We need the following results to prove  Main Theorem.

\begin{theorem}[Mundlik et al. \cite {MJH}]\label{2.3'}
	Let $Q$ be a poset in $\mathbb{P}^{\ell}_{MFP}$. Then the following statements are true.
	\begin{enumerate}
		\item[i)] $Q$ is a $0$-distributive poset.
		\item[ii)] $Spec(Q)\neq \emptyset$.
		\item[iii)] A prime ideal $P$ is minimal if and only if $P$ contains precisely one of $(x]$ or $x^{\perp }$ for any $x \in Q$.
		\item[iv)] $Q$ is quasi-complemented if and only if for any $x \in Q$ there exists $y \in Q$ such that $x^{\perp}= y^{\perp \perp}$.
	\end{enumerate}
\end{theorem}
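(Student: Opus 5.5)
The plan is to obtain the four parts in the order stated, each from the defining property of $\mathbb{P}^{\ell}_{MFP}$: every maximal $\ell$-filter is a maximal filter, and hence is prime. Throughout, the basic tool is Zorn's lemma in the form "every proper $\ell$-filter, in particular every $[a)$ with $a\neq 0$, lies in a maximal $\ell$-filter $M$." Such an $M$ is prime, so $Q\setminus M$ is a prime ideal.

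\textbf{Part (i).} Suppose $\{x,y\}^\ell=\{x,z\}^\ell=\{0\}$ but $\{x,\{y,z\}^u\}^\ell\neq\{0\}$. Pick $0\neq a$ in the latter set, so $a\le x$ and $a\le w$ for every $w\in\{y,z\}^u$. Extend $[a)$ to a maximal $\ell$-filter $M$. Then $x\in M$, and $\{y,z\}^u\subseteq M$.

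Since $M$ is a maximal filter, $y\notin M$ would mean the filter generated by $M\cup\{y\}$ is all of $Q$, which contains $0$. I expect this, together with $\{x,y\}^\ell=\{0\}$, to force a contradiction. The delicate point is the poset form of "filter generated": one needs that some finite set of elements of $M$ together with $y$ has lower cone $\{0\}$. For that I would use the characterization of maximal filters via $\{m,y\}^\ell=\{0\}$ for some $m\in M$; this is where maximality among \emph{all} filters, not just $\ell$-filters, is used. So $y\in M$, and similarly $z\in M$.

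Now $\{y,z\}^u\subseteq M$ and $M$ is prime in the dual sense, so the conclusion needs $\{x,y\}^\ell$ or a similar set to meet $M$. It is cleaner to argue with prime ideals directly. Let $P=Q\setminus M$, a prime ideal with $a\notin P$. Since $\{x,y\}^\ell=\{0\}\subseteq P$ and $x\notin P$, primeness gives $y\in P$; likewise $z\in P$. Because $P$ is an ideal, $\{y,z\}^{u\ell}\subseteq P$. But $a\in\{y,z\}^{u\ell}$, a contradiction. This settles (i), and it also gives (ii), since $0\neq a$ exists whenever $Q\neq\{0\}$.

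\textbf{Part (iii).} For a minimal prime $P$ and an element $x$, I show $P$ contains exactly one of $(x]$ and $x^\perp$.
\begin{itemize}
\item \emph{Not both.} $x\in P$ and $x^\perp\subseteq P$ cannot hold together. The filter $Q\setminus P$ together with $x$ generates a filter avoiding $0$, so it extends to a maximal $\ell$-filter disjoint from a smaller prime, contradicting minimality. This uses the standard fact that $P$ is minimal if and only if for each $x\in P$ there is $y\notin P$ with $\{x,y\}^\ell=\{0\}$.
\item \emph{At least one.} Conversely, $\{x,y\}^\ell=\{0\}$ for all $y\in x^\perp$, so if $x\notin P$ then $x^\perp\subseteq P$.
\end{itemize}
For the converse direction, suppose the dichotomy holds for $P$. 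Then every $x\in P$ has some $y\in x^\perp\setminus P$. Then $(Q\setminus P)\cup\{x\}$ generates a filter containing $0$, so any prime $P'\subseteq P$ must contain $x$, giving $P'=P$. The main obstacle is proving the minimal-prime criterion via Zorn on filters containing $Q\setminus P$ and disjoint from the semi-ideal generated by $\{0\}$. Here the $\ell$-filter hypothesis is needed so that the maximal object is prime.

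\textbf{Part (iv).} If $Q$ is quasi-complemented with a partner $y$ for $x$, then I show $x^\perp=y^{\perp\perp}$ by testing membership against minimal primes. Using (iii), $z^\perp$ equals the intersection of the minimal primes not containing $z$, and $z^{\perp\perp}$ the intersection of those containing $z$. The condition $\bigl((x]\vee(y]\bigr)^\perp=\{0\}$ says no minimal prime omits both $x$ and $y$. Together with $\{x,y\}^\ell=\{0\}$ this gives $D'(x)=V'(y)$ exactly, and the equality follows. The reverse implication reads the same computation backwards.
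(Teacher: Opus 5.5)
The paper gives no proof of this statement (it is imported from \cite{MJH}), so your argument has to stand on its own. Parts (i) and (ii) are fine via the $P=Q\setminus M$ argument. Delete the paragraph before it, though: its conclusion ``$y\in M$'' is false, since $M$ is an $\ell$-filter containing $x$ and $\{x,y\}^{\ell}=\{0\}$, so $y\in M$ would put $0\in M$.

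The genuine gap is the forward half of (iii): a minimal prime $P$ with $x\in P$ must satisfy $x^{\perp}\not\subseteq P$. You justify this by a ``standard fact'' that \emph{is} this claim, so the argument is circular. The mechanism you sketch also fails. The filter generated by $(Q\setminus P)\cup\{x\}$ need not be an $\ell$-filter, so Zorn on $\ell$-filters does not apply to it. A maximal filter containing it need not be an $\ell$-filter either, so its complement need not be a prime ideal, and no smaller prime is produced.

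The contradiction has to come from the maximality of $Q\setminus P$ itself:
(a) $Q\setminus P$ is an $\ell$-filter missing $0$. Extend it to a maximal $\ell$-filter $M$; this is a prime $\ell$-filter, so $Q\setminus M\subseteq P$ is a prime ideal, and minimality forces $M=Q\setminus P$.
(b) By the $\mathbb{P}^{\ell}_{MFP}$ hypothesis, $M$ is maximal among \emph{all} filters.
(c) Let $x\notin M$. The filters $\{m,x\}^{\ell u}$ ($m\in M$) form an upward directed family, because $M$ is down-directed. Their union is therefore a filter properly containing $M$, hence equals $Q$. This gives $\{m,x\}^{\ell}=\{0\}$ for some $m\notin P$.

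This is exactly the criterion ``$y\notin M$ iff $\{m,y\}^{\ell}=\{0\}$ for some $m\in M$'' that you raised in part (i), where it is not needed. It belongs here, and it is where maximality of $M$ among all filters, not merely its primeness, is needed.

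Part (iv) contains a mistranslation. The condition $\bigl((x]\vee(y]\bigr)^{\perp}=\{0\}$ does not say that no minimal prime omits both $x$ and $y$; that is what $\{x,y\}^{\ell}=\{0\}$ says. As written, both hypotheses give only $D'(x)\subseteq V'(y)$, and $D'(x)=V'(y)$ does not follow. The correct reading is $V'(x)\cap V'(y)=\emptyset$, and getting it takes two steps you omit:
first, $\bigl((x]\vee(y]\bigr)^{\perp}=x^{\perp}\cap y^{\perp}$, which uses (i): in a $0$-distributive poset each $w^{\perp}$ is an ideal, so any $w\in x^{\perp}\cap y^{\perp}$ annihilates $(x]\vee(y]$;
second, $x^{\perp}\cap y^{\perp}=\{0\}$ iff $V'(x)\cap V'(y)=\emptyset$, via (iii) as in $(\star)$ of Theorem \ref{1.5}.

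Your formulas $z^{\perp}=\bigcap D'(z)$ and $z^{\perp\perp}=\bigcap V'(z)$ are correct. However, they also require every nonzero element to be missed by some \emph{minimal} prime (the argument of Theorem \ref{1.6}), which you never establish.

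In fact, once $\bigl((x]\vee(y]\bigr)^{\perp}=x^{\perp}\cap y^{\perp}$ is known, (iv) needs no primes at all. Suppose $y\in x^{\perp}$ and $x^{\perp}\cap y^{\perp}=\{0\}$. Then $x\in y^{\perp}$ gives $y^{\perp\perp}\subseteq x^{\perp}$. For $w\in x^{\perp}$ and $s\in y^{\perp}$, every element of $\{w,s\}^{\ell}$ lies in $x^{\perp}\cap y^{\perp}=\{0\}$, so $x^{\perp}\subseteq y^{\perp\perp}$. Conversely, $x^{\perp}=y^{\perp\perp}$ gives $y\in x^{\perp}$ and $x^{\perp}\cap y^{\perp}=y^{\perp\perp}\cap y^{\perp}=\{0\}$, with $y\neq x$ unless $Q=\{0\}$.
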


\begin{theorem}[Pawar and Thakare \cite{PT 1}, Theorem 5]\label{2.3}
	If $Q$ is a meet-semilattice, then $Q\in \mathbb{P}^{\ell}_{MFP}$ if and only if $Q$ is $0$-distributive.
\end{theorem}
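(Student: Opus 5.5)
For the direct implication I would simply quote Theorem \ref{2.3'}(i): every poset in $\mathbb{P}^{\ell}_{MFP}$ is $0$-distributive, and this holds in particular for meet-semilattices. So the work is in the converse. There I assume that $Q$ is a $0$-distributive meet-semilattice and must show two things: every maximal $\ell$-filter is a maximal filter, and every maximal filter is prime.

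\emph{Step 1 (filters in a meet-semilattice).} For $a,b\in Q$ we have $\{a,b\}^{\ell}=(a\wedge b]$, and hence $\{a,b\}^{\ell u}=[a\wedge b)$. I would use this to show that the filters of $Q$ are exactly the nonempty up-sets closed under $\wedge$. Every filter is then automatically an $\ell$-filter, since $a\wedge b$ lies in $F\cap\{a,b\}^{\ell}$. So the two notions coincide, and a maximal $\ell$-filter is the same thing as a maximal filter. This settles the second defining condition of $\mathbb{P}^{\ell}_{MFP}$.

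\emph{Step 2 (maximal filters).} Let $M$ be a maximal filter, so $M$ is proper and $0\notin M$. For $t\notin M$, the filter generated by $M\cup\{t\}$ is the up-set of $\{m\wedge t: m\in M\}$. By maximality it equals $Q$, so it contains $0$. This gives the standard criterion: $t\notin M$ if and only if $m\wedge t=0$ for some $m\in M$.

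\emph{Step 3 (primeness).} Suppose $\{x,y\}^{u}\subseteq M$ but $x,y\notin M$. By Step 2 there are $m_1,m_2\in M$ with $m_1\wedge x=0=m_2\wedge y$. Put $m=m_1\wedge m_2\in M$. Then $\{m,x\}^{\ell}=\{m,y\}^{\ell}=\{0\}$, and $0$-distributivity gives $\{m,\{x,y\}^{u}\}^{\ell}=\{0\}$.

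If $\{x,y\}^u=\emptyset$, this set is just $(m]$. Then $m=0\in M$, a contradiction. If $\{x,y\}^u\neq\emptyset$, I must produce a nonzero element of $(m]\cap\{x,y\}^{u\ell}$ from the hypothesis $\{x,y\}^u\subseteq M$.

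This is the step I expect to be the main obstacle. For a single $z\in\{x,y\}^u$, the element $m\wedge z\in M$ is nonzero, but it lies below $z$ only, not below all of $\{x,y\}^u$. My first attempt would be to argue by contradiction using the filter structure. I would consider the family $\{m\wedge z : z\in\{x,y\}^u\}\subseteq M$ and try to show, using $0$-distributivity together with the criterion of Step 2, that if $(m]\cap\{x,y\}^{u\ell}=\{0\}$ then some element of $\{x,y\}^u$ falls outside $M$. That would contradict $\{x,y\}^u\subseteq M$. Failing that, I would reformulate primeness via its dual: the complement $Q\setminus M$ is a down-set. I would then show it is an ideal in Hala\v{s}' sense, i.e.\ that $x,y\in Q\setminus M$ forces $\{x,y\}^{u\ell}\subseteq Q\setminus M$. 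That version seems better matched to the $0$-distributive identity, since $w\in\{x,y\}^{u\ell}\cap M$ with $w\wedge m\neq 0$ would directly contradict $\{m,\{x,y\}^u\}^{\ell}=\{0\}$.
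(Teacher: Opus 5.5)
The forward direction, Step 1, Step 2, and the case $\{x,y\}^u=\emptyset$ are all fine. The paper itself gives no argument for this statement; it only cites Pawar and Thakare. The genuine gap is the remaining case of Step 3, and neither of your strategies closes it. The second strategy is a non-sequitur. Your argument does show that $Q\setminus M$ is an ideal, but primeness of $M$ in the paper's sense says more: for $x,y\notin M$, some $d\in\{x,y\}^u$ must lie outside $M$. That is, $Q\setminus M$ must be up-directed (a $u$-ideal). This is strictly stronger than $\{x,y\}^{u\ell}\subseteq Q\setminus M$, because $\{x,y\}^u$ can be a descending family inside $M$ whose common lower bounds all lie outside $M$.

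In fact the step cannot be completed with the Joshi--Waphare definition of $0$-distributivity used in the paper, so the converse fails for infinite meet-semilattices. Let $X=\{p,q\}\cup\mathbb{N}$ and $N_n=\{k\in\mathbb{N}: k\geq n\}$. Let $Q$ consist of $\emptyset,\{p\},\{q\}$ together with $N_n$, $\{p\}\cup N_n$, $\{q\}\cup N_n$, $\{p,q\}\cup N_n$ for $n\geq 1$, ordered by inclusion. $Q$ is closed under $\cap$, so it is a meet-semilattice with $0=\emptyset$, and $\{a,b\}^{\ell}=\{0\}$ means $a\cap b=\emptyset$.

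Take $a\neq\emptyset$ and $b,c\in a^{\perp}$. Either $b,c$ have a common upper bound $d\in Q$ with $d\cap a=\emptyset$, so every element of $\{a,\{b,c\}^u\}^{\ell}$ is contained in $a\cap d=\emptyset$. Or $a=N_n$ and $\{b,c\}=\{\{p\},\{q\}\}$, and then every element of $\{a,\{b,c\}^u\}^{\ell}$ is contained in $N_n\cap\bigcap_k(\{p,q\}\cup N_k)=\emptyset$. Hence $Q$ is $0$-distributive in the paper's sense.

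Now let $M=\{S\in Q: N_n\subseteq S \text{ for some } n\}$. It is a maximal filter: a filter properly containing $M$ contains one of $\emptyset,\{p\},\{q\}$, all disjoint from $N_1\in M$, so it contains $\emptyset$. Moreover $\{p\},\{q\}\notin M$, yet $\{\{p\},\{q\}\}^{u}=\{\{p,q\}\cup N_n: n\geq 1\}\subseteq M$. So $M$ is not prime and $Q\notin\mathbb{P}^{\ell}_{MFP}$. This is exactly the configuration, with $m=N_1$, that your first strategy hopes to rule out. Also, $Q\setminus M=\{\emptyset,\{p\},\{q\}\}$ is an ideal that is not up-directed, which is why your second strategy cannot give primeness.

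What Step 3 really needs is the stronger form of $0$-distributivity: if $a\neq 0$ and $a\wedge b=a\wedge c=0$, then $a\wedge d=0$ for some $d\in\{b,c\}^u$. With that, Step 3 ends at once, since Step 2 puts $d$ outside $M$. The paper's form implies this stronger form whenever any two elements with a common upper bound have a join: take $d=b\vee c$, since then $\{b,c\}^{u\ell}=(b\vee c]$. This covers lattices and finite meet-semilattices.
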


\begin{theorem}[Joshi and Waphare \cite{JW}, Theorem 2.10]\label{2.4'}
	Every pseudocomplemented poset is $0$-distributive. Further, a poset $Q$ is $0$-distributive if and only if the lattice $Id(Q)$ is pseudocomplemented.
\end{theorem}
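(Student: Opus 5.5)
The statement has two parts, and I would prove each directly from the definitions of $\{\cdot\}^{\ell}$, $\{\cdot\}^{u}$ and ideals. First I record a basic fact used throughout: every ideal $I$ is a semi-ideal. Indeed, for $a\in I$ we have $(a]=\{a,a\}^{u\ell}\subseteq I$. Consequently, for ideals $I,J$, the lower cone $\{x,y\}^{\ell}$ lies in $I\cap J$ whenever $x\in I$ and $y\in J$.

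For the first claim, let $Q$ be pseudocomplemented and suppose $\{x,y\}^{\ell}=\{x,z\}^{\ell}=\{0\}$. By the definition of $x^*$ we get $y\le x^*$ and $z\le x^*$, so $x^*\in\{y,z\}^u$. Now take any $t\in\{x,\{y,z\}^u\}^{\ell}$. Then $t\le x$, and $t$ lies below every element of $\{y,z\}^u$, in particular $t\le x^*$. Hence $t\in\{x,x^*\}^{\ell}=\{0\}$, which proves $0$-distributivity.

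For the equivalence, I would first note that $Id(Q)$ is a complete lattice in which the meet is intersection: an arbitrary intersection of ideals contains $0$ and is closed under $\{a,b\}^{u\ell}$. Pseudocomplementation therefore means: for each ideal $I$ there is a largest ideal $J$ with $I\cap J=\{0\}$.

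Assume first that $Q$ is $0$-distributive. For an ideal $I$, the candidate pseudocomplement is $I^{\perp}$.
\begin{enumerate}
\item[(a)] $I^{\perp}$ is an ideal. This is the main step, and the only place $0$-distributivity enters. Clearly $0\in I^{\perp}$. Let $y,z\in I^{\perp}$, $w\in\{y,z\}^{u\ell}$ and $x\in I$. By $0$-distributivity, $\{x,\{y,z\}^u\}^{\ell}=\{0\}$. Any $t\le x,w$ lies below every element of $\{y,z\}^u$, since $w$ does. So $t\in\{x,\{y,z\}^u\}^{\ell}=\{0\}$, and therefore $w\in I^{\perp}$.
\item[(b)] $I\cap I^{\perp}=\{0\}$. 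If $t$ lies in both, then $(t]=\{t,t\}^{\ell}=\{0\}$, so $t=0$.
\item[(c)] Maximality. If $J$ is an ideal with $J\cap I=\{0\}$, then for $y\in J$ and $x\in I$ we have $\{x,y\}^{\ell}\subseteq I\cap J=\{0\}$. Hence $J\subseteq I^{\perp}$.
\end{enumerate}
Together these show $I^{\perp}$ is the pseudocomplement of $I$ in $Id(Q)$.

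Conversely, assume $Id(Q)$ is pseudocomplemented, and suppose $\{x,y\}^{\ell}=\{x,z\}^{\ell}=\{0\}$. Let $K$ be the pseudocomplement of $(x]$ in $Id(Q)$. Since $(y]\cap(x]=\{x,y\}^{\ell}=\{0\}$, we get $(y]\subseteq K$, and likewise $(z]\subseteq K$. Because $K$ is an ideal containing $y$ and $z$, it follows that $\{y,z\}^{u\ell}\subseteq K$. Now take $t\in\{x,\{y,z\}^u\}^{\ell}$. Then $t\le x$ and $t\in\{y,z\}^{u\ell}\subseteq K$, so $t\in(x]\cap K=\{0\}$. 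This gives $0$-distributivity.

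I expect step (a) to be the only delicate point: it is exactly where the defining condition of $0$-distributivity has to be matched to the closure of $I^{\perp}$ under $\{y,z\}^{u\ell}$.
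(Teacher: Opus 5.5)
Your proof is correct and complete. The paper gives no proof of this statement; it simply quotes it from Joshi and Waphare \cite{JW}, so there is no internal argument to compare yours against.

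Your argument is the standard direct one and checks out at each step:
\begin{itemize}
\item Pseudocomplemented implies $0$-distributive: you use that $x^{*}\in\{y,z\}^{u}$.
\item For the equivalence, you identify the pseudocomplement of an ideal $I$ in $Id(Q)$ explicitly as $I^{\perp}$. $0$-distributivity enters exactly once, in showing that $I^{\perp}$ is closed under $\{y,z\}^{u\ell}$. Step (a) also handles the degenerate case $\{y,z\}^{u}=\emptyset$ without modification.
\item For the converse, you use the pseudocomplement of the principal ideal $(x]$.
\end{itemize}

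The identification $I^{*}=I^{\perp}$ proves more than the statement asserts, and it is worth keeping. The paper relies on it silently in the proof of Theorem \ref{1.5}, $(3)\Rightarrow(1)$. There it obtains $((x]\vee(y])^{\perp}=x^{\perp}\cap y^{\perp}$ from the pseudocomplement identity $(I\vee J)^{*}=I^{*}\cap J^{*}$ in $Id(Q)$. That step is only valid once $I^{*}=I^{\perp}$ is known, together with $(x]^{\perp}=x^{\perp}$.
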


\begin{lemma}[Joshi and Mundlik \cite{VM}, Lemma 3.2]\label{3.8}
	Every pseudocomplemented poset is quasi-complemented.
\end{lemma}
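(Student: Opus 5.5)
Since the lemma asserts that every pseudocomplemented poset is quasi-complemented, I will produce, for a given $x\in Q$, the witness $y=x^*$, the pseudocomplement of $x$. The proof then reduces to two verifications: that $\{x,x^*\}^{\ell}=\{0\}$, and that $\big((x]\vee(x^*]\big)^{\perp}=\{0\}$. There is also a small side issue about the requirement $y\neq x$.

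The first condition holds by the definition of the pseudocomplement.

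For the second condition, I read $(x]\vee(x^*]$ as the join in $Id(Q)$. This is the smallest ideal containing $x$ and $x^*$, so it contains both elements.
\begin{itemize}
\item Take $z\in\big((x]\vee(x^*]\big)^{\perp}$.
\item Then $\{z,x\}^{\ell}=\{0\}$, since $x$ lies in the join.
\item By the pseudocomplement property, this gives $z\le x^*$.
\item Also $\{z,x^*\}^{\ell}=\{0\}$, since $x^*$ lies in the join.
\item Because $z\le x^*$, we have $z\in\{z,x^*\}^{\ell}$, so $z=0$.
\end{itemize}
This shows the annihilator is $\{0\}$. The argument is stable under the choice of interpretation: if the join is taken as some larger ideal, its annihilator only shrinks, and we only ever used that $x$ and $x^*$ lie in the ideal.

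The main obstacle is the distinctness requirement $y\neq x$. The problematic case is $x=x^*$. This forces $x\in\{x,x^*\}^{\ell}=\{0\}$, so $x=0$, and then $0^*$ is the greatest element $1$. So $x=x^*$ can only happen when $Q=\{0\}$, which is a trivial case to set aside. Otherwise, for $x=0$ we take $y=1\neq 0$; indeed $1=0^*$ exists, since every element is annihilated by $0$. For $x\neq0$ we have $x^*\neq x$ as shown.

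A fallback route is also available: use Theorem \ref{2.4'} to get that $Q$ is $0$-distributive. Then, if $Q\in\mathbb{P}^{\ell}_{MFP}$, Theorem \ref{2.3'}(iv) applies with $y=x^*$, using $x^{\perp}=(x^*]=x^{*\perp\perp}$. However, the direct argument above does not need this extra hypothesis, so I would rely on it.
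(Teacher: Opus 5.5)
The paper does not prove this lemma; it quotes it from Joshi and Mundlik \cite{VM}, so there is no in-paper argument to compare against. Your direct proof is correct.

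With $y=x^*$, the only fact you need is the easy inclusion $\big((x]\vee(x^*]\big)^{\perp}\subseteq x^{\perp}\cap x^{*\perp}=(x^*]\cap x^{*\perp}=\{0\}$. This holds because $x$ and $x^*$ both lie in the join, whichever ideal is meant by $\vee$; in $Id(Q)$ the join is in fact $\{x,x^*\}^{u\ell}$.

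This is more economical than what the paper does in a comparable spot. In the proof of Theorem \ref{1.5}, $(3)\Rightarrow(1)$, it invokes $0$-distributivity via Theorem \ref{2.4'} to get the equality $(I\vee J)^{\perp}=I^{\perp}\cap J^{\perp}$. To show an annihilator is $\{0\}$, only the trivial inclusion is needed. So no $0$-distributivity or $\mathbb{P}^{\ell}_{MFP}$ hypothesis is required, and you can drop your fallback through Theorem \ref{2.3'}(iv).

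Your handling of the distinctness requirement is also right. $x=x^*$ forces $(x]=\{0\}$, so the only failure is the one-element poset $Q=\{0\}$. That poset is pseudocomplemented but cannot be quasi-complemented under the literal ``$y$ distinct from $x$'' wording. This is a degenerate exception to the statement as phrased, not a flaw in your argument.
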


With the groundwork laid out, let's proceed to the first result.

\begin{thm}\label{1.6} 
	Let $Q$ be a poset in $\mathbb{P}^{\ell}_{MFP}$. Then the intersection of all minimal prime ideals in $Q$, denoted $\bigcap Min(Q)$, is $\{0\}$, and hence the intersection of all prime ideals of $Q$ is also $\{0\}$.
\end{thm}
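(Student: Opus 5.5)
Fix a nonzero element $x\in Q$. The aim is to produce a minimal prime ideal $P$ with $x\notin P$. This gives $\bigcap Min(Q)=\{0\}$. Since $0$ lies in every ideal (ideals are nonempty semi-ideals, so each contains $0$), and every minimal prime ideal is prime, $\{0\}\subseteq\bigcap Spec(Q)\subseteq\bigcap Min(Q)=\{0\}$, which settles the second assertion once the first is proved.

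To find such a $P$, I would work with filters. The principal filter $[x)$ is an $\ell$-filter: for $a,b\in[x)$ we have $x\in\{a,b\}^{\ell}\cap[x)$. It is proper, because $0\notin[x)$ when $x\neq 0$. By Zorn's lemma it is contained in a maximal $\ell$-filter $F$, i.e.\ a maximal element among proper $\ell$-filters. A union of a chain of $\ell$-filters is again an $\ell$-filter: the filter property $\{a,b\}^{u\ell}$-closure holds because it involves only two elements at a time, and the same is true of the $\ell$-condition. The union avoids $0$, so it is proper. Since $Q\in\mathbb{P}^{\ell}_{MFP}$, $F$ is a maximal filter and hence a prime filter. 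Put $P=Q\setminus F$. Primality of $F$ should make $P$ a prime ideal, and $x\in F$ gives $x\notin P$. The complement step needs care in posets: if $a,b\in P$ and some $c\in\{a,b\}^{u\ell}$ lay in $F$, then $\{a,b\}^{u}\subseteq c^{u}\subseteq F$, contradicting primeness of $F$ as the dual condition.

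The main obstacle is showing that $P$ is \emph{minimal}. I would use Theorem \ref{2.3'}(iii): it suffices that for each $y\in Q$, $P$ contains exactly one of $(y]$ and $y^{\perp}$.

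They cannot both lie in $P$. If $y\in P$ and $y^{\perp}\subseteq P$, I would use maximality of $F$: since $y\notin F$, the filter generated by $F\cup\{y\}$ is all of $Q$, so it contains $0$. From this I would extract $f\in F$ with $\{f,y\}^{\ell}=\{0\}$. Then $f\in y^{\perp}\subseteq P$, a contradiction.

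At least one of them lies in $P$. If $y\notin P$, then $y\in F$. For $z\in y^{\perp}$, if also $z\in F$, the $\ell$-filter property would give a nonzero lower bound of $y,z$ in $F$ (nonzero because $0\notin F$), contradicting $\{y,z\}^{\ell}=\{0\}$. So $y^{\perp}\subseteq P$.

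The delicate point is the extraction of $f$ in the first case. The description of the filter generated by $F\cup\{y\}$ in a poset is less explicit than in a lattice. I would try to handle it by a maximality argument directly. If no $f\in F$ annihilates $y$, then the set $\{t : t\geq s \text{ for some } s\in\{f,y\}^{\ell}\setminus\{0\},\ f\in F\}$, or a suitable $\ell$-filter built from $F$ and $y$, is a proper $\ell$-filter strictly containing $F$, contradicting maximality. The $0$-distributivity from Theorem \ref{2.3'}(i) would be used if needed to verify the filter closure.

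If this direct check becomes unwieldy, the alternative is to cite the standard fact that $Q\setminus F$ is a minimal prime ideal for a maximal $\ell$-filter $F$ in $\mathbb{P}^{\ell}_{MFP}$ (Mundlik et al.\ \cite{MJH}).
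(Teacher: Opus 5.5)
Your construction of $F$ and $P=Q\setminus F$ is the paper's, but the key step, the minimality of $P$, is not established. You route it through Theorem \ref{2.3'}(iii). The ``not both'' case then needs, for $y\notin F$, some $f\in F$ with $\{f,y\}^{\ell}=\{0\}$, and you leave this open. The set you propose is the union of the principal filters $[s)$ over nonzero $s\in\{f,y\}^{\ell}$, $f\in F$. In general it is neither an $\ell$-filter nor even a filter, because two nonzero lower bounds of $f$ and $y$ may have only $0$ below them both. For example, take the Boolean lattice of subsets of $\{1,2,3\}$, the $\ell$-filter $F=\{\{1,2,3\}\}$ and $y=\{1,2\}$. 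No element of $F$ annihilates $y$, yet your set contains $\{1\}$ and $\{2\}$. (When verifying the set, you may only use that $F$ is an $\ell$-filter and that no $f\in F$ annihilates $y$; maximality is what you are trying to contradict.) The fallback of citing Mundlik et al.\ for ``$Q\setminus F$ is a minimal prime ideal'' just assumes the statement. Two smaller points. Your check shows only that $P$ is an ideal; its primeness comes from the $\ell$-property of $F$, since if $a,b\notin P$ then some $c\in\{a,b\}^{\ell}$ lies in $F$, so $\{a,b\}^{\ell}\not\subseteq P$. Also, filters are closed under $\{a,b\}^{\ell u}$, not $\{a,b\}^{u\ell}$.

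Your route can be rescued with the set $G=\bigcup_{f\in F}\{f,y\}^{\ell u}$. If $t_i\in\{f_i,y\}^{\ell u}$ and $f\in F$ lies below $f_1,f_2$, then $\{f,y\}^{\ell}\subseteq\{t_1,t_2\}^{\ell}$, so $\{t_1,t_2\}^{\ell u}\subseteq G$. Hence $G$ is a filter containing $F\cup\{y\}$, and $0\in G$ exactly when some $\{f,y\}^{\ell}=\{0\}$; maximality of $F$ among all filters then yields the required $f$. The paper, however, avoids Theorem \ref{2.3'}(iii) entirely. Suppose $P'$ is a prime ideal with $P'\subsetneqq Q\setminus F$. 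Then $Q\setminus P'$ is an $\ell$-filter: for $a,b\notin P'$, primeness gives some $c\in\{a,b\}^{\ell}\setminus P'$, and $\{a,b\}^{\ell u}\subseteq c^{u}\subseteq Q\setminus P'$. It is proper since $0\in P'$, and it strictly contains $F$, contradicting the maximality of $F$ among $\ell$-filters. This uses only that $F$ is a maximal $\ell$-filter; membership in $\mathbb{P}^{\ell}_{MFP}$ is needed just to make $Q\setminus F$ an ideal.
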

\begin{proof} 
	Let $Q \in \mathbb{P}^{\ell}_{MFP}$. Suppose, to the contrary, that there exists a non-zero element $x \in \bigcap Min(Q)$. This implies $(0] \cap [x)= \emptyset $. Then by Zorn's lemma, there exists a maximal $\ell$-filter $F$ containing $x$ such that $F \cap (0]=\emptyset $. Since $Q\in \mathbb{P}^{\ell}_{MFP}$, $F$ is a prime filter with $x \in F$. As $F$ is a maximal $\ell$-filter, which is also a prime filter, $Q\setminus F$ is a prime ideal. Now, we prove that $Q \setminus F$ is a minimal prime ideal. Suppose, on the contrary, there exists a prime ideal $P$ such that $(0] \subsetneqq P \subsetneqq Q \setminus F$. Then it is evident that $F \subsetneqq Q \setminus P$ and $Q \setminus P$ is an $\ell$-filter, which contradicts the maximality of $F$. Hence, we have $Q \setminus F$ as a minimal prime ideal, implying $x \notin Q \setminus F$. This contradicts the assumption that $x \in \bigcap Min(Q)$. Therefore, $\bigcap Min(Q)=\{0\}$. As $\bigcap Spec(Q)\subseteq \bigcap Min(Q)$, we conclude that $\bigcap Spec(Q)=\{0\}$.
\end{proof}

\begin{lemma}\label{2.9} 
	Let $Q$ be a poset in $\mathbb{P}^{\ell}_{MFP}$. Then $int V(x)=(Spec(Q)\setminus \overline{D(x)})$ for any $x\in Q$.
\end{lemma}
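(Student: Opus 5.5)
The plan is to treat Lemma \ref{2.9} as an instance of the general topological identity $int(\mu)=X\setminus\overline{X\setminus\mu}$, which holds for any subset $\mu$ of any topological space $X$. We apply it with $X=Spec(Q)$ carrying the Zariski topology and $\mu=V(x)$. By definition $D(x)=Spec(Q)\setminus V(x)$, so the right-hand side $Spec(Q)\setminus\overline{D(x)}$ is exactly $X\setminus\overline{X\setminus\mu}$. The lemma then follows once the identity is proved.

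For the inclusion $int V(x)\subseteq Spec(Q)\setminus\overline{D(x)}$, take $P\in int V(x)$. Then there is an open set $U$ with $P\in U\subseteq V(x)$, so $U\cap D(x)=\emptyset$. Hence $P$ has a neighbourhood missing $D(x)$, which gives $P\notin\overline{D(x)}$.

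For the reverse inclusion, let $W=Spec(Q)\setminus\overline{D(x)}$. It is open because $\overline{D(x)}$ is closed. Since $D(x)\subseteq\overline{D(x)}$, we have $W\subseteq Spec(Q)\setminus D(x)=V(x)$. So $W$ is an open set contained in $V(x)$, and by the definition of interior as the union of all open sets contained in $V(x)$, we get $W\subseteq int V(x)$.

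The only point needing care is that the Zariski topology is a genuine topology on $Spec(Q)$, with closed sets $V(I)$, so that closures and interiors behave as usual. The paper has already asserted this. The hypothesis $Q\in\mathbb{P}^{\ell}_{MFP}$ is used only to ensure, via Theorem \ref{2.3'}(ii), that $Spec(Q)\neq\emptyset$, so the statement is not vacuous; the identity itself needs no property of $Q$. There is no real obstacle beyond bookkeeping the complement $D(x)=Spec(Q)\setminus V(x)$ correctly.
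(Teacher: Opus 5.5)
Your proof is correct, and it takes a more general route than the paper's. The paper works with basic open sets. For $\subseteq$ it takes a $D(a)$ with $P\in D(a)\subseteq V(x)$. For $\supseteq$ it takes a basic neighbourhood $D(a)$ of $P$ missing $D(x)$, then uses Theorem~\ref{1.6} to show $\{a,x\}^{\ell}=\{0\}$, and only then concludes $D(a)\subseteq V(x)$. Theorem~\ref{1.6} gives $\bigcap Spec(Q)=\{0\}$, and this is the only place the hypothesis $Q\in\mathbb{P}^{\ell}_{MFP}$ enters.

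Your argument shows that detour is unnecessary for the lemma itself: $D(a)\cap D(x)=\emptyset$ already says $D(a)\subseteq Spec(Q)\setminus D(x)=V(x)$. Indeed $int(\mu)=X\setminus\overline{X\setminus\mu}$ holds in any topological space, so the lemma holds for every poset with $0$. The hypothesis is not needed even for non-vacuity, as you suggest; if $Spec(Q)=\emptyset$, both sides are empty.

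What the paper's route buys is an algebraic by-product: for $Q\in\mathbb{P}^{\ell}_{MFP}$, $D(a)\cap D(x)=\emptyset$ forces $a\in x^{\perp}$. That is exactly the fact invoked later in the proof of Theorem~\ref{1.5} (``$D(b)\cap D(x)=\emptyset$. Hence $\{b,x\}^{\ell}=\{0\}$''). If your proof of the lemma is adopted, that implication has to be justified separately at that point, again via Theorem~\ref{1.6}.
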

\begin{proof} 
	Let $x\in Q$ be any element. First, we prove that $int V(x)\subseteq (Spec(Q)\setminus \overline{D(x)})$. Let $P\in int V(x)$. By the definition of the interior operator, $P\in D(a)$ for some open set $D(a)\subseteq V(x)$. Hence $D(a)\cap D(x) = \emptyset$. This implies the existence of a neighborhood $D(a)$ of $P$ such that $D(a)\cap D(x)=\emptyset $. Therefore, by the definition of the closure operator $P\notin \overline{D(x)}$, i.e., $P\in Spec(Q)\setminus \overline{D(x)}$. Hence $int V(x) \subseteq (Spec(Q)\setminus \overline{D(x)})$. Now, let $P\in Spec(Q)\setminus \overline{D(x)}$. Thus, there exists a neighborhood $D(a)$ of $P$ such that $D(a)\cap D(x) = \emptyset $. We claim that $\{a,x\}^{\ell}=\{0\}$. Suppose $\{a,x\}^{\ell}\neq \{0\}$. Then there exists a non-zero element $t$ such that $t\in \{a, x\}^{\ell}$. Since $Q\in \mathbb{P}^{\ell}_{MFP}$, the intersection of all prime ideals is $\{0\}$. Hence, there exists a prime ideal $P_1$ such that $t\not \in P_1$. But then $\{a, x\}^{\ell}\not \subseteq P_1$, implies that $P_1\in D(a)\cap D(x)$.  This contradicts the fact that $D(a)\cap D(x)=\emptyset $. Thus $\{a, x\}^{\ell}=\{0\}$, implying $D(a)\subseteq V(x)$. This implies $P\in int V(x)$. Hence, $(Spec(Q)\setminus \overline{D(x)})\subseteq int V(x)$. Therefore, $int V(x)=(Spec(Q)\setminus \overline{D(x)}).$ 
\end{proof}

\begin{lemma}[Mundlik et al. \cite{MJH}, Lemma 2.7]\label{2.9'} 
	Let $Q$ be a poset with $0$ and $X$ be any non-empty subset of $Spec(Q)$. Then $\overline {X} = V(\bigcap _{P\in X} P)$.
\end{lemma}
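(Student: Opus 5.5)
We prove the two inclusions between $\overline{X}$ and $V(\bigcap_{P\in X}P)$ directly from the description of the Zariski topology. Throughout, set $I=\bigcap_{P\in X}P$. First we check that $I$ is an ideal of $Q$. It is non-empty, since $0\in P$ for every prime ideal $P$: each $P$ is a non-empty semi-ideal. If $a,b\in I$, then $\{a,b\}^{u\ell}\subseteq P$ for every $P\in X$, so $\{a,b\}^{u\ell}\subseteq I$. Hence $V(I)=\bigcap_{a\in I}V(a)$ is one of the closed sets of the topology. Moreover, a prime ideal $P'$ lies in $V(I)$ exactly when $I\subseteq P'$.

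\textbf{Step 1: $\overline{X}\subseteq V(I)$.} It suffices to show that $X\subseteq V(I)$, since $V(I)$ is closed and $\overline{X}$ is the smallest closed set containing $X$. This is immediate: for $P\in X$ we have $I\subseteq P$, so $P\in V(a)$ for every $a\in I$.

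\textbf{Step 2: $V(I)\subseteq\overline{X}$.} We use the neighbourhood description of closure. Suppose $P'\in V(I)$ and $D(a)$ is a basic open neighbourhood of $P'$, so $a\notin P'$. Because $I\subseteq P'$, it follows that $a\notin I$. Hence $a\notin P$ for some $P\in X$, which means $P\in D(a)\cap X$. So every basic neighbourhood of $P'$ meets $X$.

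It remains to note that the sets $D(a)$ form a base. Every closed set has the form $V(J)=\bigcap_{a\in J}V(a)$, so every open set is $\bigcup_{a\in J}D(a)$. Therefore every open neighbourhood of $P'$ contains a basic neighbourhood $D(a)$ of $P'$, and that $D(a)$ meets $X$. Thus $P'\in\overline{X}$.

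The only point requiring care is this identification of basic open sets: a general closed set is $V(J)$ for an ideal $J$, not just a single $V(a)$. The intersection formula resolves it, and the remaining steps are routine. Note that no hypothesis beyond $Q$ having $0$ is needed, consistent with the statement.
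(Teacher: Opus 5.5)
Your proof is correct and complete. This paper does not prove the lemma itself; it quotes it from Mundlik et al., so there is no internal argument to compare against. What you wrote is the standard hull--kernel closure computation, and each step checks out under the paper's definition of the topology (closed sets $V(J)$ for ideals $J$).

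A slightly shorter route for $V(I)\subseteq\overline{X}$ stays with closed sets rather than neighbourhoods. Suppose $V(J)$ is any closed set containing $X$. Then $J\subseteq P$ for every $P\in X$, hence $J\subseteq I$ and so $V(I)\subseteq V(J)$. Thus $V(I)$, which you showed is closed and contains $X$, is the smallest closed set containing $X$.

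This version avoids having to check that the sets $D(a)$ form a base. Your neighbourhood argument, including that base check, is equally valid.
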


\begin{thm}\label{1.5}
	Let $Q$ be a poset in $\mathbb{P}^{\ell}_{MFP}$. Then the following are equivalent. \begin{enumerate}\item $Q$ is a quasi-complemented poset. \item For every $x \in Q$, there exists $y \in x^{\perp}$ such that $x^{\perp} \cap y^{\perp}=\{0\}$. \item For every $x \in Q$, there exists $y \in Q$ such that $\overline{int(V(x))}=\overline{D(y)}$. 
\end{enumerate}\end{thm}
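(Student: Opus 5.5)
I will use Theorem \ref{2.3'}(iv) as a bridge: in $\mathbb{P}^{\ell}_{MFP}$, $Q$ is quasi-complemented if and only if for every $x$ there is $y$ with $x^{\perp}=y^{\perp\perp}$. The proof then has two parts. First I show that (2) is equivalent to this annihilator condition. Second I translate annihilators into topology, using Theorem \ref{1.6}, Lemma \ref{2.9} and Lemma \ref{2.9'}, to get (3).

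\emph{(1)$\Rightarrow$(2).} By Theorem \ref{2.3'}(iv), pick $y$ with $x^{\perp}=y^{\perp\perp}$. Since $y\in y^{\perp\perp}=x^{\perp}$, we have $y\in x^{\perp}$. If $t\in x^{\perp}\cap y^{\perp}$, then $t\in y^{\perp\perp}\cap y^{\perp}$, so $\{t,t\}^{\ell}=(t]=\{0\}$ and $t=0$.

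\emph{(2)$\Rightarrow$(1).} Take $y\in x^{\perp}$ with $x^{\perp}\cap y^{\perp}=\{0\}$; I claim $x^{\perp}=y^{\perp\perp}$.
\begin{itemize}
\item From $y\in x^{\perp}$ we get $x\in y^{\perp}$, hence $y^{\perp\perp}\subseteq x^{\perp}$.
\item Conversely, let $t\in x^{\perp}$ and $s\in y^{\perp}$, and suppose $\{t,s\}^{\ell}\neq\{0\}$. I use Theorem \ref{1.6} to choose a prime ideal $P$ missing a nonzero $w\in\{t,s\}^{\ell}$; after shrinking, $P$ can be taken minimal by Theorem \ref{2.3'}(iii).
\item Then $t,s\notin P$. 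Since $(t]\not\subseteq P$, Theorem \ref{2.3'}(iii) gives $t^{\perp}\subseteq P$, so $x\in P$. Likewise $y\in P$. Because $\{x,y\}^{\ell}=\{0\}\subseteq P$, primality is consistent with this, so I need a genuine contradiction.
\item For the contradiction I work in $Id(Q)$, which is pseudocomplemented by Theorem \ref{2.4'}. The element $w$ satisfies $w\le t$ and $w\le s$, so $w^{\perp}\supseteq x$ and, by the same minimal-prime/0-distributivity reasoning, $w^{\perp}\supseteq y$.
\item Then $\{x,y\}^{u}$-type joins $(x]\vee(y]$ lie in $w^{\perp}$ by 0-distributivity (Theorem \ref{2.3'}(i)). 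Since $w\in x^{\perp}\cap y^{\perp}$, and $w\le t\in x^{\perp}$ and $w\le s\in y^{\perp}$, we get $w\in x^{\perp}\cap y^{\perp}=\{0\}$, a contradiction.
\end{itemize}
This last step is actually direct: annihilators are semi-ideals, so $w\le t$ gives $w\in x^{\perp}$ and $w\le s$ gives $w\in y^{\perp}$. So $y^{\perp}\subseteq t^{\perp}$, i.e. $t\in y^{\perp\perp}$. With Theorem \ref{2.3'}(iv) this closes the equivalence (1)$\Leftrightarrow$(2).

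\emph{Topological translation.} I establish the dictionary
\[
\overline{D(a)}=V(a^{\perp}) \quad\text{and}\quad int V(x)=\textstyle\bigcup_{a\in x^{\perp}}D(a).
\]
\begin{itemize}
\item For the first identity, Lemma \ref{2.9'} gives $\overline{D(a)}=V(\bigcap D(a))$. I then claim $\bigcap_{P\in D(a)}P=a^{\perp}$.
\item The inclusion $a^{\perp}\subseteq P$ holds whenever $a\notin P$, because $\{a,z\}^{\ell}=\{0\}\subseteq P$ and $P$ is prime.
\item For the reverse inclusion, take $z\notin a^{\perp}$ and pick nonzero $w\in\{a,z\}^{\ell}$. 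Theorem \ref{1.6} gives a prime $P$ with $w\notin P$, so $a,z\notin P$.
\item For the second identity, the proof of Lemma \ref{2.9} shows $D(a)\subseteq V(x)$ iff $\{a,x\}^{\ell}=\{0\}$, which gives $int V(x)=\bigcup_{a\in x^{\perp}}D(a)$.
\end{itemize}
From these, the closure of $int V(x)$ is $V(\bigcap_{a\in x^{\perp}}a^{\perp})=V(x^{\perp\perp})$.

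\emph{(1)$\Leftrightarrow$(3).} Condition (3) now reads $V(x^{\perp\perp})=V(y^{\perp})$.
\begin{itemize}
\item I show that for annihilator ideals $I,J$, $V(I)=V(J)$ iff $I=J$. Using the intersection formula, $\bigcap V(a^{\perp})$-type sets recover the ideal: $\bigcap_{P\supseteq a^{\perp}}P=a^{\perp\perp\perp}$-style identities via Theorem \ref{1.6}.
\item This is the main obstacle: proving that the prime ideals containing $y^{\perp}$ intersect exactly to $y^{\perp}$. I would try to get this by realizing $V(y^{\perp})$ as $\overline{D(y)}$ and reusing $\bigcap D(y)=y^{\perp}$, so that (3) becomes $\overline{D(\cdot)}$ equalities.
\item Then (3) says $x^{\perp\perp}=y^{\perp}$. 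By symmetry of the dictionary, with $x$ and $y$ roles swapped and using that $x^{\perp}=y^{\perp\perp}$ iff $x^{\perp\perp}=y^{\perp\perp\perp}=y^{\perp}$, this is equivalent to Theorem \ref{2.3'}(iv), i.e. to (1).
\end{itemize}
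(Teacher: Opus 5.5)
Your argument is correct, and it takes a genuinely different route from the paper's.

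\textbf{The paper's route.} It runs the cycle $(1)\Rightarrow(2)\Rightarrow(3)\Rightarrow(1)$. From (2) it first derives $V'(x)\cap V'(y)=\emptyset$, using the minimal-prime characterization in Theorem \ref{2.3'}(iii). It then proves $\overline{int V(x)}=\overline{D(y)}$ by chasing basic neighbourhoods. For $(3)\Rightarrow(1)$ it checks the definition of quasi-complementedness directly, using that $Id(Q)$ is pseudocomplemented (Theorem \ref{2.4'}) to get $((x]\vee(y])^{\perp}=x^{\perp}\cap y^{\perp}$.

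\textbf{Your route.} You prove $(2)\Rightarrow(1)$ directly. Only the semi-ideal argument giving $x^{\perp}=y^{\perp\perp}$ is needed; the preceding bullets about minimal primes and $(x]\vee(y]$ can be deleted. You then translate (3) through the identities $\overline{D(a)}=V(a^{\perp})$ and $\overline{int V(x)}=V(x^{\perp\perp})$, which need only Theorem \ref{1.6} and Lemma \ref{2.9'}.

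\textbf{Comparison.} Your version shows that (3) is just the annihilator identity $x^{\perp\perp}=y^{\perp}$, i.e. Theorem \ref{2.3'}(iv), in topological form. It avoids both minimal primes and the pseudocomplementedness of $Id(Q)$. The cost is that you rely on Theorem \ref{2.3'}(iv) in both directions. The paper's route, by contrast, produces the by-product $(\star)$, which it reuses in Corollary \ref{1.8} and Theorem \ref{1.13}.

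\textbf{The step you flag as the main obstacle.} It closes with what you already have; state it as a proof rather than as something you ``would try''. Your identity $\bigcap_{P\in D(a)}P=a^{\perp}$ shows that every annihilator $S^{\perp}=\bigcap_{s\in S}s^{\perp}$ is an intersection of prime ideals. Hence it equals the intersection of all primes containing it, so $V$ is injective on annihilators. Equivalently, by Lemma \ref{2.9'}, the intersection of the primes in $\overline{X}$ equals that of the primes in $X$. Apply this to $X=D(y)$ and to $X=int V(x)=\bigcup_{a\in x^{\perp}}D(a)$. Then $\overline{int V(x)}=\overline{D(y)}$ yields $x^{\perp\perp}=y^{\perp}$ immediately, and hence $x^{\perp}=y^{\perp\perp}$.
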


\begin{proof} $(1)\Rightarrow (2)$: Let $Q$ be a quasi-complemented poset and let $x \in Q$ be any element.
	Then by Lemma \ref{2.3'}, there exist $y \in Q$ such that $x^{\perp}= y^{\perp \perp}$. It is easy to observe that $y\in y^{\perp \perp}=x^{\perp}$ and $x^{\perp} \cap y^{\perp}=y^{\perp \perp} \cap y^{\perp}=\{0\}$.   
	\vskip 5truept 
	$(2)\Rightarrow (3)$: Let $x\in Q$ be any element. Therefore by the assumption, there exists $y\in x^{\perp}$ such that $x^{\perp} \cap y^{\perp}=\{0\}$. First, we claim that $V'(x)\cap V'(y)=\emptyset$. For
	this assumes that there exists a minimal prime ideal
	$P_{0}$ such that $P_{0}\in V'(x) \cap V'(y)$. By Theorem \ref{2.3'}, we have $x^{\perp} \nsubseteq  P_0$  and $y^{\perp} \nsubseteq P_0$. Hence there exist $z_1 \in x^{\perp}$ and $z_2 \in y^{\perp}$  such that $z_1, z_2 \notin P_0$. This together with $\{z_1,z_2\}^{\ell} \subseteq x^{\perp} \cap y^{\perp} =\{0\}\subseteq P_0$, which is a contradiction to the fact that $z_1, z_2 \notin P_0$.
	
		{Thus 	$V'(x)\cap V'(y)=\emptyset$. ---------------------------------------------------\textbf{$(\star)$}}
	\vskip 5truept

	Now, we claim that $\overline{intV(x)}=\overline{D(y)}$. For this, let $P\in \overline{int(V(x))}.$  By Lemma \ref{2.9}, $P\in \overline{(Spec(Q)\setminus \overline{D(x)})}.$ Let $D(a)$ be any neighborhood of $P$. Using the definition of the closure operator, $ D(a)\cap (Spec(Q)\setminus
	\overline{D(x)})\neq \emptyset.$ Thus there exists a prime ideal $P_{1}$ of
	$Q$ such that $P_{1}\in D(a)\cap (Spec(Q)\setminus
	\overline{D(x)})$. Since $P_{1}\not \in \overline{D(x)}$, we have
	a neighborhood $D(b)$ of $P_{1}$ such that $D(b)\cap
	D(x)=\emptyset$. Hence $\{b,x\}^{\ell}=\{0\}$. As $a, b\not \in
	P_{1}$, we have $\{a,b\}^{\ell} \neq \{0\}$. Hence there exists a non-zero element $z \in \{a,b\}^{\ell}$. By Theorem \ref{1.6}, there exists a minimal prime ideal $P_{2}$ such that $z \not \in P_{2}$. Thus $a,b\not\in P_2$. 
	Since $\{b,x\}^{\ell}=\{0\} $ implies that $x\in P_2,$ \textit{i.e.,} $P_{2}\in V'(x).$ Further $V'(x)\cap V'(y)=\emptyset$, we have $P_2 \not\in V'(y),$ \textit{i.e.,} $y\not \in P_{2}$. Therefore $P_{2}\in D(y)\cap D(a)$. Thus $D(y)\cap D(a)\neq \emptyset$ and hence $P\in	\overline{D(y)}$.  This proves $\overline{intV(x)}\subseteq \overline{D(y)}$.
	\vskip 5truept 
	Now, we suppose that there exists a prime ideal $P\in \overline{D(y)}$ such that $P\not \in \overline{intV(x)}=\overline{(Spec(Q)\setminus
	\overline{D(x)})}$.	By Lemma \ref{2.9'},  $P\notin V(\bigcap_{R \in (Spec(Q)\setminus \overline{D(x)})} R),$ \textit{i.e.,} $\bigcap_{R \in (Spec(Q)\setminus \overline{D(x)})} R\not\subseteq P$. Thus there exists an element $a \in \bigcap_{R\in (Spec(Q)\setminus
	\overline{D(x)})}R $ such that $a \not \in P$. Since $P\in \overline{D(y)}\cap D(a)$, we have $D(a)\cap D(y)\neq \emptyset$.
	Thus, there is a prime ideal $R_1\in D(a)\cap D(y)$. As $a\in
	\bigcap_{R\in (Spec(Q)\setminus \overline{D(x)})} R $ and $R_1\in
	D(a)$ give $R_1\in \overline{D(x)}$. This, together with $R_1\in
	D(y)$, we have $D(x)\cap D(y)\neq \emptyset$. This gives a
	contradiction to the fact that $\{x,y\}^{\ell} = \{0\}$, \textit{i.e.}, $y\in
	x^{\perp}$. Thus $\overline{D(y)}\subseteq \overline{intV(x)}.$ Hence $\overline{intV(x)} = \overline{D(y)}$.
	\vskip 5truept 
	$(3)\Rightarrow (1)$: Let $x \in Q$ be any element. Then by the assumption, there exists $y \in Q$ such that $\overline{int V(x)}= \overline{D(y)}$. Now, to prove $Q$ is a quasi-complemented poset, we claim that $y \in x^{\perp}$ such that $((x]\vee (y])^{\perp}=\{0\}$. Suppose that $y \notin x^{\perp}$.  This gives $z \in \{x,y\}^{\ell}$ such that $z \neq 0$. By Theorem \ref{1.6}, there is a prime ideal $P$ such that $z \notin P$. Thus, $P \in D(x) \cap D(y)$.
	Since $P \in D(y)$ and $D(y) \subseteq \overline {D(y)}=\overline {int V(x)}$, we have $D(x) \cap int V(x) \neq \emptyset$,  a contradiction to $D(x) \cap V(x) = \emptyset$. Hence $y \in x^{\perp}$.

	Now, since $Q \in \mathbb{P}^{\ell}_{MFP}$, we have $Q$ is a $0$-distributive poset by Theorem \ref{2.3'}. Applying Theorem \ref{2.4'}, we have $Id(Q)$ as a pseudocomplemented lattice. It is known that in a pseudocomplemented lattice, $(I \vee J)^{\perp } = I^{\perp } \cap  J^{\perp }$ for any $I, J \in Id(Q)$. This implies that $((x] \vee (y])^{\perp}=x^{\perp} \cap y^{\perp}$. So, let $z \in  ((x] \vee (y])^{\perp}=x^{\perp} \cap y^{\perp}$ such that $z \neq 0$. Clearly, $D(x) \cap D(z)=\emptyset = D(y) \cap D(z)$. Since $z \neq 0$, there exists a prime ideal $P$ such that $z \notin P$. This together with  $D(x) \cap D(z)=\emptyset$ gives $P \notin \overline{D(x)}$ and hence by Lemma \ref{2.9},  $P \in int V(x) \subseteq \overline{int V(x)}=\overline{D(y)}.$ This together with $P\in D(z),$ we have $D(z) \cap D(y) \neq \emptyset,$  a contradiction to $D(z) \cap D(y) = \emptyset$. Thus $((x] \vee (y])^{\perp} = \{0\}$. Hence, $Q$ is a quasi-complemented poset.
\end{proof}

\begin{corollary}\label{1.8}
	Let $Q$ be a poset in $\mathbb{P}^{\ell}_{MFP}$. Then for every $x \in Q$, there exists $y \in x^{\perp}$ such that $x^{\perp} \cap y^{\perp}=\{0\}$ if and only if for every $x \in Q$, there exists $y \in Q$ such that $V'(y)=V'(x^{\perp})$.
\end{corollary}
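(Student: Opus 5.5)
We read $V'(x^{\perp})$ as $\bigcap_{a\in x^{\perp}}V'(a)$, the set of minimal prime ideals containing $x^{\perp}$. The main tool is Theorem \ref{2.3'}(iii): for a minimal prime $P$ and any $x$, exactly one of $x\in P$, $x^{\perp}\subseteq P$ holds. So
\[
V'(x^{\perp}) = Min(Q)\setminus V'(x)=D'(x).
\]
The condition $V'(y)=V'(x^{\perp})$ therefore says exactly that $V'(y)=D'(x)$. We also use Theorem \ref{1.6} (that $\bigcap Min(Q)=\{0\}$) to turn statements about elements into statements about minimal primes.

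For the forward direction, take $y\in x^{\perp}$ with $x^{\perp}\cap y^{\perp}=\{0\}$.
\begin{itemize}
\item[(a)] $D'(x)\subseteq V'(y)$. Let $P$ be minimal with $x\notin P$. Since $\{x,y\}^{\ell}=\{0\}\subseteq P$ and $P$ is prime, $y\in P$.
\item[(b)] $V'(y)\subseteq D'(x)$. This is the observation $(\star)$ in the proof of Theorem \ref{1.5}: $V'(x)\cap V'(y)=\emptyset$. Indeed, if $x,y\in P_0$, then Theorem \ref{2.3'}(iii) gives $z_1\in x^{\perp}\setminus P_0$ and $z_2\in y^{\perp}\setminus P_0$. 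But $\{z_1,z_2\}^{\ell}\subseteq x^{\perp}\cap y^{\perp}=\{0\}\subseteq P_0$, contradicting primeness of $P_0$.
\end{itemize}
Hence $V'(y)=D'(x)=V'(x^{\perp})$.

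For the converse, fix $x$ and take $y$ with $V'(y)=D'(x)$. We need $y\in x^{\perp}$ and $x^{\perp}\cap y^{\perp}=\{0\}$.
\begin{itemize}
\item[(a)] $y\in x^{\perp}$. Suppose $0\neq z\in\{x,y\}^{\ell}$. By Theorem \ref{1.6} there is a minimal prime $P$ with $z\notin P$, so $x,y\notin P$. Then $P\in D'(x)=V'(y)$, so $y\in P$, a contradiction.
\item[(b)] $x^{\perp}\cap y^{\perp}=\{0\}$. Let $0\neq w\in x^{\perp}\cap y^{\perp}$ and choose a minimal prime $P$ with $w\notin P$ (Theorem \ref{1.6}). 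From $\{w,x\}^{\ell}=\{0\}\subseteq P$ we get $x\in P$, and likewise $y\in P$. Then $P\in V'(y)=D'(x)$, so $x\notin P$, a contradiction.
\end{itemize}

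The step needing most care is the identification $V'(x^{\perp})=D'(x)$. It requires reading Theorem \ref{2.3'}(iii) as a genuine dichotomy for every minimal prime. Both inclusions use it: if $x\notin P$ then $x^{\perp}\subseteq P$ by the "precisely one" clause, and if $x^{\perp}\subseteq P$ then $x\notin P$ by the same clause. The rest only uses primeness and Theorem \ref{1.6}.
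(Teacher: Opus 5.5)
Your proof is correct and follows the same route as the paper. The forward direction is the observation $(\star)$ from Theorem \ref{1.5} combined with Theorem \ref{2.3'}(iii). The converse uses $\bigcap Min(Q)=\{0\}$ to find a minimal prime avoiding a putative nonzero element, and you make the identification $V'(x^{\perp})=D'(x)$ explicit where the paper leaves it implicit, and you also write out the step $x^{\perp}\cap y^{\perp}=\{0\}$ that the paper dismisses as ``on similar lines.''
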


\begin{proof} The only if implication follows by $(\star)$ of Theorem \ref{1.5} and Theorem \ref{2.3'}. For the converse part, let $x\in Q$. Then by the assumption, there exists $y\in Q$ such that $V'(y)= V'(x^{\perp})$. To prove that $y\in x^{\perp}$, consider a non-zero element $z\in \{x, y\}^{\ell}$. Now, using the fact that $\bigcap Min(Q)=\{0\}$, there exists $M\in Min(Q)$ such that $z\notin M$. This gives that $x,y\notin M$ and hence $M\in V'(x^{\perp})=V'(y)$, which is not possible. Thus, $y\in x^{\perp}$. On similar lines, we can prove that $x^{\perp}\cap y^{\perp}=\{0\}$.
\end{proof}

It should be noted that the complemented zero-divisor graphs of rings need not be uniquely complemented. However, in the case of posets, we have a pleasant situation. 

\begin{thm}\label{1.11}
	Let $Q$ be a poset with $0$. Then $\Gamma(Q)$ is complemented if and only if $\Gamma(Q)$ is uniquely complemented.
\end{thm}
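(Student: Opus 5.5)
The direction ``uniquely complemented $\Rightarrow$ complemented'' holds by definition. For the converse I would argue by contradiction, using only the definition of adjacency through lower cones. No results on $\mathbb{P}^{\ell}_{MFP}$ should be needed, and no hypothesis on $Q$ beyond having $0$.

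Assume $\Gamma(Q)$ is complemented, and let $a\bot b$ and $a\bot c$ with $b\neq c$; if $b=c$ there is nothing to prove. By symmetry between $b$ and $c$, it suffices to show that every vertex $d$ adjacent to $b$ is adjacent to $c$. Suppose instead that $\{d,b\}^{\ell}=\{0\}$ but $d$ is not adjacent to $c$. I will write the argument so that it also covers $d=c$, where ``not adjacent'' means that $d$ is just $c$ itself. In every case $\{d,c\}^{\ell}\neq\{0\}$: if $d=c$ this is because $c\in\{c,c\}^{\ell}$ and $c\neq 0$. So choose a non-zero $t\in\{d,c\}^{\ell}$; when $d=c$ one may simply take $t=c$.

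The key step is that lower cones shrink under going down. Since $t\le c$, we get $\{a,t\}^{\ell}\subseteq\{a,c\}^{\ell}=\{0\}$. Since $t\le d$, we get $\{b,t\}^{\ell}\subseteq\{b,d\}^{\ell}=\{0\}$. Hence $t$ is a non-zero zero-divisor, and so a vertex of $\Gamma(Q)$. Moreover $t\neq a$, because otherwise $a\in\{a,a\}^{\ell}=\{0\}$; similarly $t\neq b$. So $t$ is a vertex adjacent to both $a$ and $b$, which contradicts $a\bot b$. Therefore $b$ and $c$ have the same neighbours, and $\Gamma(Q)$ is uniquely complemented.

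The only delicate point is bookkeeping. One must check that the witness $t$ is a genuine vertex distinct from $a$ and $b$, and handle the degenerate case in which the distinguishing vertex is $c$ itself, i.e.\ $b$ adjacent to $c$. The choice $t\in\{d,c\}^{\ell}$ above handles both uniformly. This contrasts with rings, where a nilpotent-type obstruction ($x^2=0$) breaks the analogous monotonicity argument.
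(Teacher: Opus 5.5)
Your proof is correct and is essentially the paper's argument: a nonzero $t\in\{d,c\}^{\ell}$ is adjacent to both $a$ and $b$ because lower cones shrink as you go down, which contradicts $a\bot b$. Your explicit checks that $t$ is a vertex with $t\neq a,b$, and that the case $d=c$ (where $b$ would be adjacent to $c$) is covered, are details the paper leaves implicit.
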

\begin{proof} Let $\Gamma(Q)$ be a complemented graph and $a, b, c \in V(\Gamma(Q))$ such that $a\bot b$ and $a\bot c$. Assume that $x \in V(\Gamma(Q))$ be adjacent to $b$ but not adjacent to $c$. Therefore $\{x,c\}^{\ell}\neq \{0\}$. Thus, there exists $y \neq 0$ such that $y \in \{x,c\}^{\ell}$. Since  $ \{x,b\}^{\ell}=\{0\}=\{a,c\}^{\ell},$ we have  $y \in V(\Gamma(Q))\setminus_{\{a,b\}}$ such that $ \{y,b\}^{\ell}=\{0\}=\{y,a\}^{\ell}$ and hence $y$ is adjacent to both $a$ and $b$, contradicts to $ a \bot b$. Hence, $x$ is adjacent to $c$. Similarly, we can prove that if $x$ is adjacent to $c$, then $x$ is adjacent to $b$. Thus, $b$ and $c$ are adjacent to exactly the same vertices. Therefore,  $\Gamma(Q)$ is uniquely complemented. Converse follows by the definition of a uniquely complemented graph.
\end{proof}

\begin{defn} A poset $Q$ with $0$ is said to satisfy the  \textit{annihilator condition $(a.c.)$}, if for every $ x,y \in Q$, there exists $z \in Q$  such that $x^{\perp} \cap y^{\perp}=z^{\perp}$.
\end{defn}	

\begin{thm}\label{1.11'} Let $Q$ be a poset in $\mathbb{P}^{\ell}_{MFP}$ such that $Q$ satisfies $a.c.$ Then $Q$ is a quasi-complemented poset if and only if $\Gamma(Q)$ is complemented.  
\end{thm}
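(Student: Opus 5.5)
The plan is to reduce everything to condition (2) of Theorem \ref{1.5}: for every $x\in Q$ there is $y\in x^{\perp}$ with $x^{\perp}\cap y^{\perp}=\{0\}$. Since $Q\in\mathbb{P}^{\ell}_{MFP}$, that theorem says this condition is equivalent to $Q$ being quasi-complemented.

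The first step is to translate graph complementation into annihilator language. For vertices $a,b$ of $\Gamma(Q)$, I claim $a\bot b$ if and only if $b\in a^{\perp}$ and $a^{\perp}\cap b^{\perp}=\{0\}$.

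\emph{The intersection contains neither $a$ nor $b$.} If $a\in a^{\perp}$, then $(a]=\{a\}^{\ell}=\{0\}$, so $a=0$; but $a$ is a vertex, hence nonzero. The same argument applies to $b$.

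\emph{Nonzero elements of the intersection are vertices.} Let $c\neq 0$ with $c\in a^{\perp}\cap b^{\perp}$. Then $c$ is annihilated by the nonzero element $a$, so $c$ is a zero-divisor and hence a vertex.

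Consequently, a vertex adjacent to both $a$ and $b$ is exactly a nonzero element of $a^{\perp}\cap b^{\perp}$, which proves the claim.

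($\Leftarrow$) Assume $Q$ is quasi-complemented and let $x$ be a vertex. Theorem \ref{1.5} gives $y\in x^{\perp}$ with $x^{\perp}\cap y^{\perp}=\{0\}$.
\begin{itemize}
\item $y\neq 0$: otherwise $y^{\perp}=Q$, which forces $x^{\perp}=\{0\}$. This contradicts $x$ being a zero-divisor.
\item $y\neq x$: we saw $x\notin x^{\perp}$.
\end{itemize}
So $y$ is a nonzero element annihilating the nonzero $x$, hence a vertex, and by the claim $x\bot y$. Thus $\Gamma(Q)$ is complemented. This direction does not use a.c.

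($\Rightarrow$) Assume $\Gamma(Q)$ is complemented. We verify condition (2) of Theorem \ref{1.5} for each $x\in Q$, splitting into three cases.
\begin{itemize}
\item \emph{$x$ is a vertex.} Its complement $y$ works, by the claim.
\item \emph{$x\neq 0$ and $x$ is not a zero-divisor.} Then $x^{\perp}=\{0\}$, and $y=0$ works: $0\in x^{\perp}$ and $x^{\perp}\cap 0^{\perp}=\{0\}$.
\item \emph{$x=0$.} Here $x^{\perp}=Q$, so we need a "dense" element $y$ with $y^{\perp}=\{0\}$.
\end{itemize}

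The main obstacle is producing this dense element, and this is where the annihilator condition is used.
\begin{itemize}
\item If $\Gamma(Q)$ has a vertex $a$, take its complement $b$. Then $a^{\perp}\cap b^{\perp}=\{0\}$, and a.c. gives $z\in Q$ with $z^{\perp}=a^{\perp}\cap b^{\perp}=\{0\}$. So $y=z$ works.
\item If $\Gamma(Q)$ is empty and $Q\neq\{0\}$, any nonzero $y$ has $y^{\perp}=\{0\}$, since it is not a zero-divisor.
\item If $Q=\{0\}$, then $y=0$ works trivially.
\end{itemize}
Condition (2) therefore holds for every $x\in Q$, and Theorem \ref{1.5} yields that $Q$ is quasi-complemented.
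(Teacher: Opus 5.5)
Your proof is correct and follows the paper's argument essentially step for step. The forward direction goes through condition (2) of Theorem \ref{1.5}. The converse uses the same case split on $x$ (vertex, nonzero non-zero-divisor, $x=0$), with a.c.\ invoked exactly once to turn $a^{\perp}\cap b^{\perp}=\{0\}$ for a complementary pair into a single $y$ with $y^{\perp}=\{0\}$. Your explicit lemma that $a\bot b$ iff $b\in a^{\perp}$ and $a^{\perp}\cap b^{\perp}=\{0\}$, and your checks that $y\neq 0$ and $y\neq x$, fill in what the paper dismisses as ``easy''. You also handle the degenerate case $Q=\{0\}$, which the paper leaves implicit.
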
               

\begin{proof} Let $Q$ be a quasi-complemented poset. It is easy to prove that  $\Gamma(Q)$ is complemented using Theorem \ref{1.5} and the definition of the complement of a vertex in a graph. Conversely, let $\Gamma(Q)$ be a complemented graph. To prove that, $Q$ is a quasi-complemented, by Theorem \ref{1.5}, we claim that for every $x\in Q$, there exists $y \in x^{\perp}$ such that $x^{\perp}\cap y^{\perp}=\{0\}$. Let $x\in Q$. If $x\in V(\Gamma(Q))$, then by the definition of a complemented graph, there exists $y\in V(\Gamma(Q))$ such that $x\bot y$, {\it{i.e.,}} $x$ and $y$ are adjacent in $\Gamma(Q)$ such that there is no vertex $z (\neq 0)$, which is adjacent to both $x$ and $y$. Hence $y \in x^{\perp}$ with $x^{\perp}\cap y^{\perp}=\{0\}$. We assume that $x\notin V(\Gamma(Q))$. This implies that either $x^{\perp}=\{0\}$ or $x=0$. \newline If $x^{\perp}=\{0\}$, then we have $y=0\in x^{\perp}$ such that $x^{\perp}\cap y^{\perp}=\{0\}$.\newline Let $x=0$. If $V(\Gamma(Q)) = \emptyset$, then for any non-zero element $y\in Q$, we have $y^{\perp }=\{0\}$ and hence $x \in y^\perp$ and $x^\perp \cap y^\perp=\{0\}$. Now, we consider the case $V(\Gamma(Q))\neq \emptyset $. Since $\Gamma(Q)$ is a complemented graph, there exist $a,b\in V(\Gamma(Q))$ such that $a \bot b$, that is, $a^{\perp}\cap b^{\perp}=\{0\}$. Since $Q$ satisfies $a.c.$, there exists $y\in Q$ such that $a^{\perp} \cap b^{\perp}=y^{\perp}$. Clearly $y\neq 0$. Thus,  $x \in \{0\} = y^{\perp}$ and $x^{\perp}\cap y^{\perp}=\{0\}$. 
\end{proof}          

If $Q$ is a lattice and $Q\in \mathbb{P}^{\ell}_{MFP}$, by Theorem \ref{2.3}, $Q$ is $0$-distributive. Further, in a  $0$-distributive lattice $L$, for any $a,b\in L$, $(a\vee b)^{\perp}=a^{\perp}\cap b^{\perp}$. This shows that a $0$-distributive lattice always satisfies $a.c.$ So  Theorem \ref{1.11'} takes a following form.

\begin{corollary}\label{1.12'}
	Let $Q$ be a lattice such that $Q\in \mathbb{P}^{\ell}_{MFP}$ (equivalently, $Q$ is a $0$-distributive lattice). Then $Q$ is  quasi-complemented if and only if $\Gamma(Q)$ is complemented.       
\end{corollary}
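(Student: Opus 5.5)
This corollary should follow from Theorem \ref{1.11'}. The only work is to check that its two hypotheses hold for a lattice $Q\in\mathbb{P}^{\ell}_{MFP}$: that $Q$ lies in $\mathbb{P}^{\ell}_{MFP}$, which is given, and that $Q$ satisfies the annihilator condition. The parenthetical equivalence is Theorem \ref{2.3}, since a lattice is in particular a meet-semilattice. So $Q$ is $0$-distributive, and it remains to show $a.c.$

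For $a.c.$, I will take $a,b\in Q$ and prove $(a\vee b)^{\perp}=a^{\perp}\cap b^{\perp}$, so that $z=a\vee b$ works. In a lattice, $\{x,y\}^{\ell}=\{0\}$ means exactly $x\wedge y=0$.

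The inclusion $(a\vee b)^{\perp}\subseteq a^{\perp}\cap b^{\perp}$ is monotonicity. If $x\wedge(a\vee b)=0$, then $x\wedge a\le x\wedge(a\vee b)=0$, and likewise $x\wedge b=0$.

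For the reverse inclusion, suppose $x\wedge a=0=x\wedge b$. The $0$-distributive law for posets gives $\{x,\{a,b\}^u\}^{\ell}=\{0\}$. In a lattice, $\{a,b\}^u=[a\vee b)$, so $\{x,\{a,b\}^u\}^{\ell}=(x\wedge(a\vee b)]$. Hence $x\wedge(a\vee b)=0$, that is, $x\in(a\vee b)^{\perp}$. This translation of the poset form of $0$-distributivity into the lattice identity is the only point needing care, and it is routine.

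With $a.c.$ established, Theorem \ref{1.11'} applies directly and gives that $Q$ is quasi-complemented if and only if $\Gamma(Q)$ is complemented. Theorem \ref{1.11} then lets us add ``uniquely complemented'' to the equivalence if desired. I expect no real obstacle.
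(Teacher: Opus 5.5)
Your proposal is correct and follows the paper's own argument. The paper likewise uses Theorem \ref{2.3} to get $0$-distributivity, notes that $(a\vee b)^{\perp}=a^{\perp}\cap b^{\perp}$ in a $0$-distributive lattice (so $a.c.$ holds with $z=a\vee b$), and then applies Theorem \ref{1.11'}; you additionally spell out the translation $\{x,\{a,b\}^u\}^{\ell}=(x\wedge(a\vee b)]$, which the paper leaves implicit.
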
                                                          
A topological space $X$ is  {\it compact} if for every
family of open sets $\{A_{\alpha}\}_{\alpha \in \Lambda} $ with $X \subseteq \bigcup_{\alpha \in \Lambda}A_{\alpha}$, there are $A_{\alpha_1},\cdots, A_{\alpha_n} \in \{A_\alpha \}_{\alpha \in \Lambda}$
such that $X \subseteq  \bigcup_{i=1}^{n}A_{\alpha_{i}}$. A topological space $X$ is said to be a \textit{Hausdorff space (T2-space)} if each pair $x, y$ of distinct points of $X$ there exist	neighborhoods $U_1$ and $U_2$ of $x$ and $y$ respectively, that are disjoint.

\begin{thm}\label{1.13} Let $Q$ be a poset in $\mathbb{P}^{\ell}_{MFP}$ such that $Q$ satisfies $a.c.$ If $Min(Q)$ is compact, then for any $x\in Q$, there exists $y\in Q$ such that $\overline{int V(x)} = \overline{D(y)}$.
\end{thm}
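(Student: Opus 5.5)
By Theorem \ref{1.5} and Corollary \ref{1.8}, it suffices to show the following. For every $x\in Q$ there is $y\in Q$ with $V'(y)=V'(x^{\perp})$, where $V'(x^{\perp})=\{P\in Min(Q): x^{\perp}\subseteq P\}$. Indeed, Corollary \ref{1.8} converts this into condition (2) of Theorem \ref{1.5}, and $(2)\Rightarrow(3)$ then yields $\overline{int V(x)}=\overline{D(y)}$.

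The plan is to identify $V'(x^{\perp})$ with the basic open set $D'(x)$ of $Min(Q)$ and then use compactness. By Theorem \ref{2.3'}(iii), a minimal prime $P$ contains exactly one of $x$ and $x^{\perp}$. Hence $V'(x^{\perp})=D'(x)$ and $D'(x)=Min(Q)\setminus V'(x)$. So each $D'(x)$ is clopen in $Min(Q)$, and it remains to show that the closed set $V'(x)$ has the form $D'(y)$ for some $y$.

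Write $V'(x)=\bigcap_{z\in x^{\perp}} V'(z)$ complemented, namely $V'(x)=\bigcup_{z\in x^{\perp}}D'(z)$. For $\supseteq$: if $z\in x^{\perp}$ and $z\notin P$, then $\{x,z\}^{\ell}=\{0\}\subseteq P$ and primeness give $x\in P$. For $\subseteq$: if $x\in P$ with $P$ minimal, then $x^{\perp}\not\subseteq P$ by Theorem \ref{2.3'}(iii), so some $z\in x^{\perp}$ has $z\notin P$.

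Since $V'(x)$ is closed in the compact space $Min(Q)$, it is compact. The cover $\{D'(z)\}_{z\in x^{\perp}}$ therefore has a finite subcover, giving $V'(x)=D'(z_1)\cup\cdots\cup D'(z_n)$ with $z_i\in x^{\perp}$. (If $V'(x)=\emptyset$, take $y=0$ when appropriate; more simply, in that case $x^{\perp}=\{0\}$ and any element works.)

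The main obstacle is combining finitely many $z_i$ into a single $y$, and this is where a.c.\ enters. Repeatedly applying a.c.\ gives $w$ with $z_1^{\perp}\cap\cdots\cap z_n^{\perp}=w^{\perp}$. Using Theorem \ref{2.3'}(iii), $V'(w^{\perp})=D'(w)$, and we need to relate $V'(\bigcap z_i^{\perp})$ to $\bigcup V'(z_i^{\perp})$. The intersection of ideals is contained in a prime $P$ iff one of them is. The direction $\Leftarrow$ is trivial. For $\Rightarrow$, pick $t_i\in z_i^{\perp}\setminus P$. By primeness there is a nonzero element $s\in\{t_1,t_2\}^{\ell}\setminus P$, and iterating lower bounds lands in the intersection outside $P$. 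This step needs care, since iterated lower cones in posets are not automatic; I would use that $Q\setminus P$ is a filter, so the $t_i$ have a common lower bound outside $P$. Since such a bound lies below each $t_i$ and semi-ideals are down-closed, it lies in $\bigcap z_i^{\perp}$.

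Thus $D'(w)=\bigcup D'(z_i)=V'(x)$, so $V'(w)=D'(x)=V'(x^{\perp})$. Taking $y=w$ finishes the proof via Corollary \ref{1.8} and Theorem \ref{1.5}.
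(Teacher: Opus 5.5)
Your proof is correct and follows the paper's route: compactness of $Min(Q)$ gives finitely many $z_1,\dots,z_n\in x^{\perp}$ with $V'(x)=\bigcup_{i=1}^{n}D'(z_i)$, a.c.\ merges them into one $y$ with $y^{\perp}=\bigcap_{i=1}^{n}z_i^{\perp}$, and one checks $V'(y)=V'(x^{\perp})$; the only difference is the inclusion $D'(y)\subseteq V'(x)$, which the paper gets directly from $x\in\bigcap_{i=1}^{n}z_i^{\perp}=y^{\perp}$, i.e.\ $y\in x^{\perp}$, so your prime-avoidance step is not needed (it is still valid, since primeness of $P$ makes $Q\setminus P$ downward directed). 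One small slip: when $V'(x)=\emptyset$ only $y=0$ works, not ``any element'', because then $V'(y)=V'(x^{\perp})=Min(Q)$ forces $y\in\bigcap Min(Q)=\{0\}$.
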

\begin{proof} Let $x\in Q$. Then it is enough to prove that there exists $y\in x^{\perp}$ such that $V'(x)\cap V'(y)=\emptyset $ and hence by using the proof of Theorem \ref{1.5}, we have $\overline{int V(x)} = \overline{D(y)}$. Clearly, $V'(x)\cap V'(x^{\perp})=\emptyset$. This gives that $V'(x)\cap (\bigcap _{a\in x^{\perp}}V'(a))= \emptyset $. Since $Min(Q)$ is compact, it satisfies the finite intersection property. Thus there exist $a_{1}, a_{2},\cdots ,a_{n}\in x^{\perp}$ such that $V'(x)\cap (\bigcap _{i=1}^{n}V'(a_i))= \emptyset $, where $a_i \in x^{\perp}$. Then by the Theorem \ref{2.3'}, we can easily prove that $V'(x^{\perp})=\bigcap _{i=1}^{n}V'(a_i), $  where $a_i \in x^{\perp}$. Now, as we have $Q$ satisfies $a.c.$, by the induction, we can prove that there exists $y\in Q$ such that  $y^{\perp}= \bigcap _{i=1}^{n} a_i^{\perp}$. Clearly, $x\in  \bigcap _{i=1}^{n} a_i^{\perp} = y^{\perp}$, we have $y\in x^{\perp}$. We prove that $V'(y)=\bigcap _{i=1}^{n}V'(a_i)=V'(x^{\perp})$. For this, let $M\in V'(y)$. This gives $y^{\perp}\not \subseteq M $ and hence $\bigcap _{i=1}^{n} a_i^{\perp} \not \subseteq M$. Thus  $a_{i}\in M$, for every $i=1,2,\cdots, n$ and hence $M\in \bigcap _{i=1}^{n}V'(a_i)=V'(x^{\perp})$. This proves that $V'(y)\subseteq V'(x^{\perp})$. Now, let $M\in \bigcap _{i=1}^{n}V'(a_i)=V'(x^{\perp})$ and since $y\in x^{\perp}$, we have $M\in V'(y)$. Thus we have $V'(y)=\bigcap _{i=1}^{n}V'(a_i)=V'(x^{\perp})$ and hence $V'(x)\cap V'(y)=\emptyset$. 
\end{proof}

\begin{lemma}\label{1.16} Let $Q$ be a poset in $\mathbb{P}^{\ell}_{MFP}$. Then for any elements $x_1,x_2,\cdots ,x_n$ in $Q$, \linebreak $(\{x_1,x_2,\cdots , x_n\}^{u\ell})^{\perp }=x_1^{\perp }\cap x_2^{\perp }\cap \cdots \cap x_n^{\perp }$.\end{lemma}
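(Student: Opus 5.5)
One inclusion is immediate and the other comes from $0$-distributivity, via Theorem \ref{2.3'}(i) and Theorem \ref{2.4'}. Write $A=\{x_1,\dots,x_n\}^{u\ell}$. Since each $x_i\in A$, any $z\in A^{\perp}$ satisfies $\{z,x_i\}^{\ell}=\{0\}$ for every $i$. Hence $A^{\perp}\subseteq \bigcap_{i=1}^n x_i^{\perp}$, and this direction needs no hypothesis on $Q$.

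For the reverse inclusion, take $z\in \bigcap_i x_i^{\perp}$ and $a\in A$; we must show $\{z,a\}^{\ell}=\{0\}$. I would avoid the pseudocomplemented lattice $Id(Q)$ here, because the ideal join $(x_1]\vee\cdots\vee(x_n]$ may be larger than $A$. Instead I would use minimal primes through Theorem \ref{1.6}. Suppose $0\neq t\in\{z,a\}^{\ell}$. By Theorem \ref{1.6} there is a minimal prime ideal $P$ with $t\notin P$, so $z\notin P$ and $a\notin P$. Since $z\in x_i^{\perp}$, we have $\{z,x_i\}^{\ell}=\{0\}\subseteq P$, and primeness of $P$ forces $x_i\in P$ for every $i$. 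The remaining step is to show that $x_1,\dots,x_n\in P$ implies $A=\{x_1,\dots,x_n\}^{u\ell}\subseteq P$; this gives $a\in P$, a contradiction.

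The main obstacle is this closure step, because the definition of an ideal only guarantees $\{p,q\}^{u\ell}\subseteq P$ for two elements. I would argue by induction on $n$ that the ideal generated by finitely many elements contains their whole $u\ell$-closure. For $n=2$ this is the definition. For larger $n$, the difficulty is that $\{x_1,\dots,x_n\}^{u\ell}$ is not obviously obtained by iterating pairwise closures in a poset, so I would argue directly with primeness instead. Suppose $a\in A\setminus P$. By the dual characterization, $Q\setminus P$ is a prime filter containing $a$ and $z$. Since $a\in\{x_1,\dots,x_n\}^{u\ell}$, every common upper bound of the $x_i$ lies above $a$.

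If closure under finite $u\ell$ fails for ideals in general, I would fall back on Theorem \ref{2.3'}(iii). For the minimal prime $P$ with $z\notin P$, this gives $z^{\perp}\subseteq P$, and $z^{\perp}$ is itself a semi-ideal. I would then try to show $A\subseteq z^{\perp}$ by induction on $n$ using $0$-distributivity: $\{z,x_1\}^{\ell}=\{z,x_2\}^{\ell}=\{0\}$ gives $\{z,\{x_1,x_2\}^{u}\}^{\ell}=\{0\}$, and from this one extracts that elements of $\{x_1,x_2\}^{u\ell}$ are disjoint from $z$. Iterating is where care is needed, since $\{x_1,\dots,x_n\}^{u}$ must be handled as a set rather than through a single join. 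I expect this induction to be the crux of the proof.
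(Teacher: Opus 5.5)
There is a genuine gap. Your reduction is sound: the inclusion $A^{\perp}\subseteq\bigcap_i x_i^{\perp}$ is fine, and choosing a minimal prime $P$ with $t\notin P$ correctly gives $z,a\notin P$ and $x_1,\ldots,x_n\in P$. But the proposal stops at the step that carries all the content, namely $x_1,\ldots,x_n\in P\Rightarrow A\subseteq P$, and neither of your suggestions for it works.

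Closure of ideals under finite $u\ell$-closures is false for $n\geq 3$. The ideal axiom only handles pairs, and the ideal generated by $x_1,\ldots,x_n$ can be strictly smaller than $A$. For example, take atoms $x_1,x_2,x_3,c$ over $0$. For each pair $i<j$ add a maximal element $w_{ij}$ whose only lower elements are $0,x_i,x_j$. Add two incomparable maximal elements $u_1,u_2$ lying above all four atoms. Then $\{0,x_1,x_2,x_3\}$ is an ideal, yet $c\in\{x_1,x_2,x_3\}^{u\ell}$.

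This is also the real reason $Id(Q)$ does not help, and your stated reason is reversed. $A$ is itself an ideal containing every $x_i$, so $(x_1]\vee\cdots\vee(x_n]\subseteq A$ always. The join can be smaller than $A$, not larger, so pseudocomplementation in $Id(Q)$ only yields the easy inclusion.

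Your fallback is circular. $A\subseteq z^{\perp}$ is literally the statement $z\in A^{\perp}$ you are trying to prove. Moreover, two-element $0$-distributivity applied to pairwise closures hits the same obstruction, since iterated pairwise closures need not exhaust $A$.

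The missing idea is the one you began to write and then abandoned: you need $Q\setminus P$ to be a \emph{prime filter}, equivalently $P$ to be a $u$-ideal. This is not a formal dual characterization. The complement of a prime ideal is always an $\ell$-filter, but it is a prime filter exactly when $P$ is a $u$-ideal, which is not automatic. It does hold for minimal primes in $\mathbb{P}^{\ell}_{MFP}$:
\begin{itemize}
\item Extend the $\ell$-filter $Q\setminus P$ by Zorn's lemma to a maximal $\ell$-filter $F$ with $0\notin F$.
\item $F$ is a maximal, hence prime, filter, so $Q\setminus F$ is a prime ideal contained in $P$.
\item Minimality of $P$ gives $Q\setminus F=P$.
\end{itemize}
Then iterate. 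Pick $y_1\in\{x_1,x_2\}^u\cap P$, then $y_2\in\{y_1,x_3\}^u\cap P$, and so on, ending with $w\in\{x_1,\ldots,x_n\}^u\cap P$. Since $a\leq w$, you get $a\in P$, which is the desired contradiction.

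The paper does precisely this iteration but skips minimal primes altogether. It takes a maximal $\ell$-filter $F$ containing a nonzero $s\in\{t,q\}^{\ell}$ and observes $t\in F$ and $x_i\notin F$. It then applies primeness of $F$ repeatedly to obtain a common upper bound of $x_1,\ldots,x_n$ outside $F$. This contradicts $s\in F$, since $s$ lies below that bound.

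As written, your argument proves the lemma only for $n\leq 2$.
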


\begin{proof} Let $Q\in \mathbb{P}^{\ell}_{MFP}$ and let $\{x_1,x_2,\ldots, x_n\} \subseteq Q$. Since $x_i\in \{x_1,x_2,\ldots, x_n\}^{u\ell}$ for all $i=1,2,\ldots, n$, we have $(\{x_1,x_2,\ldots, x_n\}^{u\ell})^{\perp } \subseteq x_i^{\perp }$ for all $i=1,2,\ldots, n$ and hence $(\{x_1,x_2,\ldots, x_n\}^{u\ell})^{\perp } \subseteq x_1^{\perp }\cap x_2^{\perp }\cap \cdots \cap x_n^{\perp }$. Now, we prove the reverse inclusion $ x_1^{\perp }\cap x_2^{\perp }\cap \cdots \cap x_n^{\perp } \subseteq (\{x_1,x_2,\ldots, x_n\}^{u\ell})^{\perp } $. Let $t\in x_1^{\perp }\cap x_2^{\perp }\cap \cdots \cap x_n^{\perp }$. We claim that $\{t,z\}^{\ell}=\{0\},~ \text{for all}~ z\in \{x_1,x_2,\ldots, x_n\}^{u\ell}$. Suppose on the contrary that there exists an element $q\in \{x_1,x_2,\ldots, x_n\}^{u\ell}$ such that $\{t,q\}^{\ell}\neq \{0\}$. Thus there is a non-zero element $s\in \{t,q\}^{\ell}$. Clearly, $s\in \{x_1,x_2,\ldots, x_n\}^{u\ell}$. Since $s\neq 0$, by Zorn’s Lemma, there is a maximal $\ell$-filter $F$ containing $s$. By the assumption that $Q\in \mathbb{P}^{\ell}_{MFP}$, $F$ is a prime filter. Clearly, $t\in F$. This together with $t\in x_1^{\perp }\cap x_2^{\perp }\cap \cdots \cap x_n^{\perp }$ and $F$ is an $\ell$-filter, we have $x_i\not \in F$, for each $i=1,2,\ldots , n$. Now, we show that there exists an element $z\in \{x_1,x_2,\ldots ,x_n\}^{u}$ such that $z\not \in F$ by using primeness of the filter $F$ for $(n-1)$ times. For simplicity, we can take the case when $n=3$. Since $x_1,x_2,x_3 \not \in F$ and $F$ is prime, we have $\{x_1,x_2\}^{u}\not \subseteq F$ and $\{x_1,x_3\}^{u}\not \subseteq F$. Thus, there exist elements $y_1\in \{x_1,x_2\}^{u}$ and $y_2\in \{x_1,x_3\}^{u}$ such that $y_1, y_2 \not \in F$. Again by primeness of the filter $F$, we have $\{y_1,y_2\}^{u}\not \subseteq F$. Thus there exists an element $z_1\in \{y_1,y_2\}^{u}$ such that $z_1\not \in F$. Clearly, $z_1\in \{x_1,x_2,x_3\}^{u} \setminus F $.   Thus, repeating this technique,  we can find an element $z\in \{x_1,x_2,\ldots,x_n\}^{u}$ such that $z\not \in F$.  As  $s\in \{x_1,x_2,\ldots , x_n\}^{u\ell}$ and $z\in \{x_1,x_2,\ldots ,x_n\}^{u},$ we have  $z\leq s.$ Then $z\not \in F$ implies that $s\not \in F$, a contradiction to the fact that $s\in F$. Thus, we have $s=0.$ Therefore, $\{t,q\}^{\ell}=\{0\},$ for every $q\in \{x_1,x_2,\cdots, x_n\}^{u\ell}$ and hence $t\in (\{x_1,x_2,\ldots , x_n\}^{u\ell})^{\perp }$. Thus, we have $(\{x_1,x_2,\ldots , x_n\}^{u\ell})^{\perp }=x_1^{\perp }\cap x_2^{\perp }\cap \cdots \cap x_n^{\perp }$.
\end{proof}

\begin{rem}
	We show that the assumptions $Q\in \mathbb{P}^{\ell}_{MFP}$ of Lemma \ref{1.16} is necessary. For the poset $Q$ depicted in Figure 1, we have $(\{a, b\}^{u\ell})^{\perp } \neq a^{\perp }\cap b^{\perp }$ and $Q\notin \mathbb{P}^{\ell}_{MFP}$. In fact, a maximal filter $F=[b)$ is not prime. 
\end{rem}	

\begin{center}
	\begin{tikzpicture}[scale=.91]
		\draw [fill=black] (-2.5,0) circle (.05);
		\draw [fill=black] (-2.5,0.5) circle (.05);
		\draw [fill=black] (-2.5,1) circle (.05);
		\draw [fill=black] (-2.5,1.5) circle (.05);
		\draw [fill=black] (-1.3,0.5) circle (.05);
		\draw [fill=black] (-1.3,1) circle (.05);
		\draw [fill=black] (-1.3,1.5) circle (.05);
		\draw [fill=black] (-1.3,2) circle (.05);
		\draw [fill=black] (-1.3,-1.6) circle (.05);
		\draw [fill=black] (-0.5,-1.6) circle (.05);
		\draw [fill=black] (-1.3,-2.3) circle (.05);
		
		\draw [line width=1pt] (-2.5,0)--(-2.5,0.5)--(-2.5,1)--(-2.5,1.5);
		\draw [line width=1pt] (-1.3,0.5)--(-1.3,1)--(-1.3,1.5)--(-1.3,2);
		\draw [line width=1pt] (-2.5,0)--(-1.3,0.5);
		\draw [line width=1pt] (-2.5,0.5)--(-1.3,1);
		\draw [line width=1pt] (-2.5,1)--(-1.3,1.5);
		\draw [line width=1pt] (-2.5,1.5)--(-1.3,2);
		
		\draw [line width=1pt] (-1.3,-1.6)--(-1.3,-2.3);
		\draw [line width=1pt] (-0.5,-1.6)--(-1.3,-2.3);
		\draw [line width=1pt] (-1.3,-0.9)--(-1.3,-1.5);
		\draw [line width=1pt] (-1.2,-0.9)--(-0.5,-1.6);
		\draw [line width=1pt] (-2.5,-1.2)--(-1.3,-2.3);
		
		\draw[dashed, line width=1pt, -{Triangle[width=3pt, length=3.5pt]}]  (-1.3,0.5) to [out=-90, looseness=0]  (-1.3,-0.7);
		\draw[dashed, line width=1pt, -{Triangle[width=3pt, length=3.5pt]}]  (-2.5,0) to [out=-90, looseness=0]  (-2.5,-1);
		
		\node at (-1.3,-2.5) {$0$};
		\node at (-1.1,-1.6) {$b$};
		\node at (-0.3,-1.6) {$c$};
		\node at (-2.7,1.5) {$a$};
	\end{tikzpicture}
	\vspace{.01in}
	
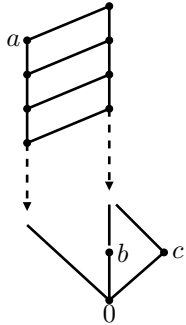
\captionof{figure}{A poset $Q\notin \mathbb{P}^{\ell}_{MFP}$ such that $(\{a, b\}^{u\ell})^{\perp } \neq a^{\perp }\cap b^{\perp }$.}
	\label{figure1}
\end{center}

Clearly, every quasi-complemented poset is 
weakly quasi-complemented. The converse is true for $0$-distributive join-semilattice; see Mundlik et al. \cite[Lemma 3.34]{MJH}. Now, we extend this result to posets.

\begin{thm}\label{1.20}
	Let $Q$ be a poset in $\mathbb{P}^{\ell}_{MFP}$ such that $Q$ satisfies $a.c.$ Then $Q$ is a quasi-complemented poset if and only if $Q$ is a weakly quasi-complemented poset. 
\end{thm}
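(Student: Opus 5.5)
The direction "quasi-complemented $\Rightarrow$ weakly quasi-complemented" is already noted in the text above; we still record it in the present setting. If $Q$ is quasi-complemented, then by Theorem \ref{2.3'}(iv), for each $x$ there is $y$ with $x^{\perp}=y^{\perp\perp}$. Hence $x^{\perp\perp}=y^{\perp\perp\perp}=y^{\perp}$, using the standard Galois identity $A^{\perp\perp\perp}=A^{\perp}$. So we may take $n=1$ and $y_1=y$, provided $y\neq x$.

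The degenerate cases need separate handling. If $y=x$, then $x^{\perp}=x^{\perp\perp}$, and since $x^\perp\cap x^{\perp\perp}=\{0\}$ this forces $x^\perp=\{0\}$. In that case $x^{\perp\perp}=Q=0^{\perp}$, so we use $y_1=0$ when $x\neq 0$. When $x=0$ we have $x^{\perp\perp}=\{0\}^{\perp}\ldots$, which reduces to $0^{\perp\perp}=Q^{\perp}=\{0\}$; this is handled by $y_1$ any nonzero element with $y_1^\perp=\{0\}$, or by using a.c. as in the proof of Theorem \ref{1.11'}.

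For the converse, assume $Q$ is weakly quasi-complemented and fix $x\in Q$. Choose $y_1,\dots,y_n$ with $x^{\perp\perp}=\bigcap_{i=1}^n y_i^{\perp}$. Applying a.c. repeatedly, by induction on $n$ exactly as in the proof of Theorem \ref{1.13}, we obtain $z\in Q$ with $z^{\perp}=\bigcap_{i=1}^n y_i^{\perp}$. One could instead use Lemma \ref{1.16} and take the ideal $\{y_1,\dots,y_n\}^{u\ell}$, but a.c. gives a single element, which is what Theorem \ref{2.3'}(iv) needs. Thus $x^{\perp\perp}=z^{\perp}$, and taking annihilators gives $x^{\perp}=x^{\perp\perp\perp}=z^{\perp\perp}$. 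By Theorem \ref{2.3'}(iv), since $Q\in\mathbb{P}^{\ell}_{MFP}$, the poset $Q$ is quasi-complemented.

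The main point to verify is the identity $A^{\perp\perp\perp}=A^{\perp}$ for subsets of a poset. This follows because $A\subseteq A^{\perp\perp}$ and $\perp$ is order-reversing, so it is routine. The only other care needed is the trivial cases $x=0$ and $x^\perp=\{0\}$ in the forward direction, where the distinctness requirement "$y_i$ distinct from $x$" must be checked. If Theorem \ref{2.3'}(iv) is read literally, the converse direction needs no distinctness at all, so the essential content is just the reduction via a.c. to a single annihilator.
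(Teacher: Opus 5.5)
Your proposal is correct and follows essentially the same route as the paper. The forward direction comes from Theorem \ref{2.3'}(iv) via $x^{\perp\perp}=y^{\perp\perp\perp}=y^{\perp}$. The converse iterates a.c.\ to collapse $\bigcap_{i=1}^n y_i^{\perp}$ into a single $z^{\perp}$ and then applies Theorem \ref{2.3'}(iv) to $x^{\perp}=z^{\perp\perp}$. Your extra bookkeeping about the requirement that the $y_i$ be distinct from $x$, which the paper passes over, is a harmless refinement. In the case $x=0$, the nonzero $y_1$ with $y_1^{\perp}=\{0\}$ that you invoke exists directly from the definition of quasi-complementedness, since $((0]\vee(y])^{\perp}=y^{\perp}$.
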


\begin{proof} The only if implication follows by Theorem \ref{2.3'} and by the definition of a weakly quasi-complemented poset.
	For the converse part, assume that $Q$ is a weakly quasi-complemented poset. Let $x\in Q$. Thus there exist $y_1, y_2,\cdots, y_n$ distinct from $x$ such that $x^{\perp \perp} = \bigcap_{i=1}^{n}y_{i}^{\perp }$. As $y_1, y_2\in Q$ and $Q$ satisfies the $a.c.$,  there exists $z_1\in Q$ such that $y_1^{\perp }\cap y_2^{\perp }=z_1^{\perp }$. Also, $z_1, y_3\in Q$, and $Q$ satisfies the $a.c.$, there exists $z_2\in Q$ such that $z_1^{\perp }\cap y_3^{\perp }=z_2^{\perp }$. Continuing in this way, we can find an element $z_{n-1}\in Q$ such that  $z_{n-2}^{\perp }\cap y_n^{\perp }=z_{n-1}^{\perp }$. Clearly, $z_{n-1}^{\perp } = y_1^{\perp }\cap y_2^{\perp }\cap \cdots \cap y_n^{\perp } =x^{\perp \perp} $. Thus $z_{n-1}^{\perp \perp }=x^{\perp \perp \perp}=x^{\perp }$. By Theorem \ref{2.3'}, $Q$ is a quasi-complemented poset. 
\end{proof}

The following result is due to Mundlik et al. \cite {MJH}, which gives the necessary condition for compactness of $Min(Q)$, which extends the result of Pawar \cite{PT 2}.

\begin{thm}[Mundlik et al. \cite {MJH}, Theorem 3.35]\label{3.35} Let $Q$ be a poset in $\mathbb{P}^{\ell}_{MFP}$. If $Min(Q)$ is compact, then $Q$ is a weakly quasi-complemented.   
\end{thm}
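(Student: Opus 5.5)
Fix $x\in Q$. The goal is to produce finitely many $y_1,\dots,y_n$ with $x^{\perp\perp}=\bigcap_{i=1}^n y_i^{\perp}$, and I will get them from compactness of $Min(Q)$. Two facts, both coming from Theorem \ref{2.3'}(iii) and Theorem \ref{1.6}, do most of the work.

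\begin{itemize}
\item[(a)] For a minimal prime $M$ and any $a$, we have $a^{\perp}\not\subseteq M$ if and only if $a\in M$. So $D'(a)$ is exactly the set of minimal primes containing $a^{\perp}$.
\item[(b)] For an ideal-like set $A$, $A^{\perp}=\{0\}$ exactly when no minimal prime misses every element of $A$. More precisely, $t\in A^{\perp}$ if and only if $D'(t)\cap D'(a)=\emptyset$ for all $a\in A$. This uses $\bigcap Min(Q)=\{0\}$ together with the fact that $\{t,a\}^{\ell}\neq\{0\}$ produces a minimal prime missing a nonzero lower bound.
\end{itemize}

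Consequently annihilators are determined by their traces on $Min(Q)$. Concretely, $t\in x^{\perp\perp}$ if and only if $D'(t)\subseteq \overline{\bigcup_{a\in x^{\perp}}D'(a)}$ inside $Min(Q)$, or equivalently $D'(t)\cap D'(x^{\perp})$-type conditions hold.

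The compactness step runs as follows. We have $V'(x)\cap\bigcap_{a\in x^{\perp}}V'(a)=\emptyset$: a minimal prime containing $x$ and all of $x^{\perp}$ would contain both $(x]$ and $x^{\perp}$, contradicting (iii). Each $V'(a)$ is closed in $Min(Q)$, and in fact clopen, since $V'(a)=\bigcup_{b\in a^{\perp}}D'(b)$ by (a). So $\{D'(x)\}\cup\{D'(a):a\in x^{\perp}\}$ is an open cover of $Min(Q)$. Compactness then gives $a_1,\dots,a_n\in x^{\perp}$ with
\[
V'(x)\cap V'(a_1)\cap\cdots\cap V'(a_n)=\emptyset .
\]
Combined with $V'(x^{\perp})\subseteq \bigcap_i V'(a_i)$, which holds because each $a_i\in x^{\perp}$, this yields $\bigcap_i V'(a_i)=V'(x^{\perp})=D'(x)$. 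These are the same manipulations as in the proof of Theorem \ref{1.13}.

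Now set $y_i=a_i$ and show $x^{\perp\perp}=\bigcap_i a_i^{\perp}$. Since $a_i\in x^{\perp}$, we get $x^{\perp\perp}\subseteq a_i^{\perp}$ for each $i$. For the reverse inclusion, take $t\in\bigcap_i a_i^{\perp}$ and $b\in x^{\perp}$, and suppose $\{t,b\}^{\ell}\ni s\neq0$. Choose a minimal prime $M$ with $s\notin M$; then $t,b\notin M$. From $t\in a_i^{\perp}$ and $t\notin M$ we get $a_i\in M$ for all $i$, so $M\in\bigcap V'(a_i)=D'(x)$, i.e. $x\notin M$. But $b\in x^{\perp}$ and $b\notin M$ force $x\in M$, a contradiction. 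Hence $t\in x^{\perp\perp}$.

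The $y_i$ must be distinct from $x$. If some $a_i=x$, then $x\in x^{\perp}$ forces $x=0$. In that case $x^{\perp\perp}=Q^{\perp}=\{0\}$, and we can choose any nonzero $y$ with $y^{\perp}=\{0\}$, or adjust the list; the trivial poset is handled separately. I expect this degenerate case to need a small ad hoc argument, for example using $x^{\perp\perp}=\{0\}=0$'s annihilator issue, and to be a minor nuisance rather than a real obstacle.

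The main obstacle I anticipate is the topological bookkeeping: verifying that $D'(a)$ is open and $V'(a)$ is closed in the subspace $Min(Q)$, so that compactness really applies to the family $\{V'(a)\}$ with the finite intersection property. Fact (a) makes this clean.
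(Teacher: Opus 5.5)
Your proof is correct and takes essentially the same route as the paper. The paper only cites Mundlik et al.\ for this result, but your compactness step is exactly the manipulation in the proof of Theorem~\ref{1.13}: cover $Min(Q)$ by $D'(x)$ and the sets $D'(a)$ with $a\in x^{\perp}$, then extract $a_1,\dots,a_n\in x^{\perp}$ with $\bigcap_i V'(a_i)=V'(x^{\perp})=D'(x)$. That proof goes on to merge the $a_i$ into a single $y$ using $a.c.$; you instead keep the finite family and check $\bigcap_i a_i^{\perp}\subseteq x^{\perp\perp}$ directly, using a minimal prime that avoids a nonzero lower bound, and that check is sound.

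The degenerate case you left open closes without any new idea.
\begin{itemize}
\item If $x=0$, then $x^{\perp}=Q$ and compactness gives $\bigcap_i V'(a_i)=\emptyset$. Since $V'(0)=Min(Q)$, deleting every $a_i$ equal to $0$ leaves this intersection unchanged. The remaining list is nonempty because $Min(Q)\neq\emptyset$ once $Q\neq\{0\}$ (Theorem~\ref{1.6}). Your last step then gives $\bigcap_i a_i^{\perp}=\{0\}=0^{\perp\perp}$ with every $a_i\neq 0$.
\item The alternative you suggest, a nonzero $y$ with $y^{\perp}=\{0\}$, need not exist. In $\{0,a,b\}$ with $a,b$ incomparable atoms there is no such $y$, yet $a^{\perp}\cap b^{\perp}=\{0\}$.
\item If $x\neq 0$, then $a_i\neq x$ is automatic. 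Appending $0\in x^{\perp}$ to the list guarantees $n\geq 1$ in the case where $D'(x)$ alone covers $Min(Q)$.
\end{itemize}

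Two of your preliminary remarks are inaccurate, though nothing in the proof uses them.
\begin{itemize}
\item The condition $D'(t)\subseteq\overline{\bigcup_{a\in x^{\perp}}D'(a)}=V'(x)$ characterizes $t\in x^{\perp}$, not $t\in x^{\perp\perp}$. The correct condition is $t\in x^{\perp\perp}$ if and only if $D'(t)\cap V'(x)=\emptyset$.
\item In (b), $A^{\perp}=\{0\}$ corresponds to $V'(A)$ having empty interior in $Min(Q)$, not to $V'(A)$ being empty.
\end{itemize}
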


\begin{rem}[Mundlik et al. \cite{MJH}, Remark 3.3]\label{2.27} It is clear that if $Q\in \mathbb{P}^{\ell}_{MFP}$, then the set of all minimal prime semi-ideals $Min_s(Q)$ and set of all minimal prime ideals $Min(Q)$ coincides. That is $Min(Q) = Min_s(Q)$ for $Q\in \mathbb{P}^{\ell}_{MFP}$.
\end{rem}

\begin{lem}[Cornish \cite{C}, Proposition 1.2]\label{2.20}
	A $0$-distributive lattice $L$ is a quasi-complemented if and only if $Min(L)$ is a compact Hausdorff space.
\end{lem}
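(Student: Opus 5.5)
The plan is to split the statement into a Hausdorff part, which I expect to hold for every $0$-distributive lattice, and a compactness part, which is the real content. Throughout, $L$ is a $0$-distributive lattice, so $L\in\mathbb{P}^{\ell}_{MFP}$ by Theorem \ref{2.3}, and all results proved above for $\mathbb{P}^{\ell}_{MFP}$ are available. Since $L$ is a lattice, $(a\vee b)^{\perp}=a^{\perp}\cap b^{\perp}$, so $L$ satisfies $a.c.$ Also, since minimal primes are prime ideals, $V'(a\vee b)=V'(a)\cap V'(b)$ and $D'(a\wedge b)=D'(a)\cap D'(b)$.

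\emph{Hausdorff.} By Theorem \ref{2.3'}(iii), a minimal prime $M$ contains $a$ if and only if $a^{\perp}\not\subseteq M$. This gives
\[
V'(a)=\bigcup_{b\in a^{\perp}}D'(b),
\]
so $V'(a)$ is open in $Min(L)$, and each $D'(a)$ is clopen. Now take distinct $P,M\in Min(L)$. By minimality neither contains the other, so there is some $a\in P\setminus M$. Then $V'(a)$ and $D'(a)$ are disjoint open sets containing $P$ and $M$ respectively, so $Min(L)$ is Hausdorff.

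\emph{Compact $\Rightarrow$ quasi-complemented.} By Theorem \ref{3.35}, compactness of $Min(L)$ makes $L$ weakly quasi-complemented. Since $L$ satisfies $a.c.$, Theorem \ref{1.20} upgrades this to quasi-complemented.

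\emph{Quasi-complemented $\Rightarrow$ compact.} This is the main step.
\begin{enumerate}
\item For each $x$, Theorem \ref{1.5} gives $y\in x^{\perp}$ with $x^{\perp}\cap y^{\perp}=\{0\}$. By $(\star)$ we have $V'(x)\cap V'(y)=\emptyset$, hence $V'(x)\subseteq D'(y)$. Conversely, $\{x,y\}^{\ell}=\{0\}$ forces every prime to contain $x$ or $y$, so $D'(y)\subseteq V'(x)$. Thus $V'(x)=D'(y)$: every basic closed set is also a basic open set.
\item Suppose $Min(L)=\bigcup_{i\in\Lambda}D'(a_i)$ has no finite subcover. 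Then for every finite $F\subseteq\Lambda$,
\[
\bigcap_{i\in F}V'(a_i)=V'\Bigl(\bigvee_{i\in F}a_i\Bigr)\neq\emptyset .
\]
\item Write $V'(a_i)=D'(y_i)$ using step 1. Then $D'\bigl(\bigwedge_{i\in F}y_i\bigr)\neq\emptyset$, so every finite meet of the $y_i$ is nonzero. Hence the $y_i$ generate a proper filter missing $0$.
\item By Zorn's lemma, extend this to a filter maximal with respect to missing $0$ (a maximal $\ell$-filter). As in the proof of Theorem \ref{1.6}, this filter is prime and its complement $P$ is a minimal prime ideal.
\item Then $y_i\notin P$ for all $i$, so $P\in D'(y_i)=V'(a_i)$, i.e.\ $a_i\in P$ for every $i$. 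So $P$ lies in no $D'(a_i)$, contradicting the cover.
\end{enumerate}
The only care needed is in step 4, namely checking that the argument of Theorem \ref{1.6} really yields minimality of $L\setminus F$ for the chosen maximal $\ell$-filter $F$; I expect that to go through as written.
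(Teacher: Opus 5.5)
Your proof is correct. The paper contains no argument for this lemma: it is quoted from Cornish [C, Proposition 1.2] and used only to attach (9) to the Main Theorem in the lattice case. You instead derive it from the paper's own $\mathbb{P}^{\ell}_{MFP}$ results. Your worry about Step 4 is unfounded. In a lattice every filter is an $\ell$-filter. If a prime $P'\subsetneq L\setminus F$ existed, then $L\setminus P'$ would be a proper filter strictly containing $F$, which is impossible.

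The gain from your route is that it shows where the lattice hypothesis is actually needed.
\begin{itemize}
\item Your Hausdorff argument uses only Theorem \ref{2.3'}(iii) and primeness. So $Min(Q)$ is Hausdorff, with clopen base $\{D'(a)\}$, for every $Q\in\mathbb{P}^{\ell}_{MFP}$.
\item Your compact $\Rightarrow$ quasi-complemented step uses only Theorems \ref{3.35} and \ref{1.20}. It is just $(9)\Rightarrow(8)\Rightarrow(1)$ of the Main Theorem, valid whenever $Q\in\mathbb{P}^{\ell}_{MFP}$ satisfies a.c.
\item In the converse, Step 1 holds in the whole class. Step 2 does not need joins, since nonempty finite intersections of the $V'(a_i)$ is exactly the absence of a finite subcover. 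So the lattice structure enters only in Steps 3--4, where finite meets of the $y_i$ give a single proper filter containing all of them.
\end{itemize}
That last step is the one a poset version would have to replace, which is the content of the paper's closing Conjecture.

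The compact $\Rightarrow$ quasi-complemented direction can also be done without the external Theorem \ref{3.35}. Fix $x$. Then $V'(x)$ is closed, hence compact, and by your identity it is covered by $\{D'(b): b\in x^{\perp}\}$. Choose $b_1,\dots,b_n$ covering it and set $y=b_1\vee\cdots\vee b_n$. By $0$-distributivity $y\in x^{\perp}$, and $D'(y)=\bigcup_i D'(b_i)=V'(x)$. If $0\neq z\in x^{\perp}\cap y^{\perp}$, Theorem \ref{1.6} gives a minimal prime $M$ with $z\notin M$. Then $x,y\in M$, so $M\in V'(x)\setminus D'(y)$, a contradiction. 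This verifies condition (2) of Theorem \ref{1.5} directly, without passing through weak quasi-complementation.
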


We close this section with the following Main Theorem. 
\vspace{0.4mm}

\begin{theorem*} \label{1}
	Consider the following statements for a poset $Q\in \mathbb{P}^{\ell}_{MFP}$.
	\begin{enumerate} 
		\item[(1)] $Q$ is a quasi-complemented poset.
		\item[(2)] For any $x \in Q$, there exists $y \in Q$ such that $x^{\perp}= y^{\perp \perp}$.
		\item[(3)] For every $x\in Q$, there exists $y\in x^{\perp}$ such that $x^{\perp}\cap y^{\perp}=\{0\}$.
		\item[(4)] For every $x\in Q$, there exists $y\in Q$ such that $\overline{\text{int}(V(x))}=\overline{D(y)}$.
		\item[(5)] For every $x\in Q$, there exists $y\in Q$ such that $V'(y)=V'(x^{\perp})$.
		\item[(6)] $\Gamma(Q)$ is a complemented.
		\item[(7)] $\Gamma(Q)$ is an uniquely complemented.
		\item[(8)] $Q$ is a weakly quasi-complemented.
		\item[(9)] $Min(Q)  ( = Min_s(Q))$ is compact.
	\end{enumerate}
	Then the implications $(1)\Leftrightarrow (2) \Leftrightarrow (3) \Leftrightarrow (4) \Leftrightarrow (5)$, $(3)\Rightarrow (6)\Leftrightarrow (7)$, $(1)\Rightarrow (8)$, and $(9)\Rightarrow (8)$ hold. Moreover, if the poset $Q$ satisfies $a.c.$, then the statements $(1)$ to $(8)$ are  equivalent. Further,  if $Q$ is a lattice, then statements $(1)$ to $(9)$ are  equivalent.
\end{theorem*}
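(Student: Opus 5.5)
The Main Theorem is essentially an assembly of the results proved earlier in this section, so I will obtain it by chaining them and checking that each arrow is covered by a stated result under the right hypotheses.

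\emph{The unconditional implications.} The equivalence $(1)\Leftrightarrow(2)$ is Theorem \ref{2.3'}(iv). Theorem \ref{1.5} gives $(1)\Leftrightarrow(3)\Leftrightarrow(4)$, and Corollary \ref{1.8} gives $(3)\Leftrightarrow(5)$. For $(3)\Rightarrow(6)$, take a vertex $x$ of $\Gamma(Q)$ and let $y\in x^{\perp}$ with $x^{\perp}\cap y^{\perp}=\{0\}$. Then $y\neq 0$, since otherwise $x^{\perp}=\{0\}$, contradicting that $x$ is a zero-divisor. Also $y\neq x$, since $\{x,x\}^{\ell}=(x]\neq\{0\}$. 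Hence $y$ is a vertex adjacent to $x$, and any vertex adjacent to both would be a nonzero element of $x^{\perp}\cap y^{\perp}$, which is impossible. So $x\bot y$. The equivalence $(6)\Leftrightarrow(7)$ is Theorem \ref{1.11}, valid for any poset with $0$. The implication $(1)\Rightarrow(8)$ follows from (2): $x^{\perp\perp}=y^{\perp\perp\perp}=y^{\perp}$, so $n=1$ with $y_1=y$ works, provided $y\neq x$. If $y=x$, then $x\in x^{\perp}$, which forces $x=0$; in that case $0^{\perp\perp}=\{0\}=z^{\perp}$ for any $z$ with $z^{\perp}=\{0\}$, which I expect to handle by taking a nonzero non-zero-divisor, or by citing the paper's remark that this is clear. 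The implication $(9)\Rightarrow(8)$ is Theorem \ref{3.35}.

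\emph{Under a.c.} Theorem \ref{1.11'} gives $(6)\Rightarrow(1)$, which closes the cycle through $(1)$–$(7)$. Theorem \ref{1.20} gives $(8)\Rightarrow(1)$. Hence $(1)$–$(8)$ are equivalent.

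\emph{The lattice case.} By Theorem \ref{2.3}, $Q$ is a $0$-distributive lattice, and, as noted before Corollary \ref{1.12'}, it satisfies a.c. because $(a\vee b)^{\perp}=a^{\perp}\cap b^{\perp}$. So $(1)$–$(8)$ are equivalent, and it remains to attach (9). Lemma \ref{2.20} (Cornish) gives that (1) implies $Min(Q)$ is compact Hausdorff, so $(1)\Rightarrow(9)$. Together with $(9)\Rightarrow(8)\Rightarrow(1)$, this closes the loop. The identification $Min(Q)=Min_s(Q)$ is Remark \ref{2.27}.

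The only step needing care is the degenerate $x=0$ (or $x^{\perp}=\{0\}$) bookkeeping in $(1)\Rightarrow(8)$ and $(3)\Rightarrow(6)$, where "distinct from $x$" and vertex membership must be checked. Everything else is direct citation.
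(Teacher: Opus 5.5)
Your proposal is correct and follows the paper's proof essentially step for step. You use the same chain of citations: Theorem \ref{2.3'}(iv), Theorem \ref{1.5}, Corollary \ref{1.8}, Theorem \ref{1.11} and Theorem \ref{3.35} for the unconditional part, Theorems \ref{1.11'} and \ref{1.20} under a.c., and Theorem \ref{2.3} with Lemma \ref{2.20} in the lattice case. Your explicit vertex checks in $(3)\Rightarrow(6)$ just fill in what the paper calls immediate.

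The $x=0$ loose end you flag in $(1)\Rightarrow(8)$ closes at once. Applying (1) to $x=0$ gives $y\neq 0$ with $y^{\perp}=((0]\vee(y])^{\perp}=\{0\}=0^{\perp\perp}$. In fact, $y=x$ in (2) can only happen when $Q=\{0\}$.
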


\begin{proof} Let $Q$ be a poset in $\mathbb{P}^{\ell}_{MFP}$.
Clearly, by Theorem \ref{2.3'} and Theorem \ref{1.5}, the first four statements are all equivalent. Furthermore, statements $(3)$ and $(5)$ are equivalent by Corollary \ref{1.8}. Thus, statements $(1)$ through $(5)$ are all equivalent.
	
Now, the implication $(3) \Rightarrow (6)$ follows by the assumption and from the definition of the complemented zero-divisor graph of $Q$. Moreover, by Theorem \ref{1.11}, statements $(6)$ and $(7)$ are equivalent.
	
The implication $(1) \Rightarrow (8)$ follows directly from Theorem \ref{2.3'} and the definition of a weakly quasi-complemented poset. Also, the implication $(9) \Rightarrow (8)$ follows from Theorem \ref{3.35} and Remark \ref{2.27}.
	
Now, assume that the poset $Q$ satisfies the condition $a.c.$ Then, by Theorem \ref{1.11'}, statements $(1)$ and $(6)$ are equivalent. Additionally, the converse of $(1) \Rightarrow (8)$ follows from Theorem \ref{1.20}. Therefore, under the assumption that $Q \in \mathbb{P}^{\ell}_{MFP}$ satisfies $a.c.$, all the statements $(1)$ to $(8)$ are equivalent.
	
Furthermore, if $Q$ is a lattice, then by Theorem \ref{2.3}, Lemma \ref{2.20}, and Corollary \ref{1.12'}, all the statements $(1)$ to $(9)$ are equivalent.
\end{proof}

\begin{rem} Note that, if we drop the condition that $Q$ satisfies $a.c.$ (or $Q$ is a lattice) then the reverse implications $(6)\Rightarrow (3), (8)\Rightarrow (1)$ need not be true. Consider a meet-semilattice $S=\{0,a,b\}$ with $0\leq a,b$ and $a$ and $b$ are incomparable. Then $(S,\leq )$ is $0$-distributive meet-semilattice and hence by Theorem \ref{2.3}, $S\in \mathbb{P}^{\ell}_{MFP}$. Clearly, $\Gamma(S)$ is complemented but for every $y\in S=0^{\perp}$, $0^{\perp}\cap y^{\perp}\neq \{0\}$. Also, $S$ does not satisfy $a.c.$   Thus $(6)\nRightarrow (3)$. Also, it is easy to see that $S$ is a weakly quasi-complemented poset but not  quasi-complemented. Thus $(8)\nRightarrow (1)$.  Also, the reverse implications $(8)\Rightarrow (9)$ need not be true if $Q$ does not satisfy $a.c.$
	
\begin{center}
	\begin{tikzpicture}[scale=1]
		\draw[line width=1pt] (-0.01,0.01) -- (-2,1);
		\draw[line width=1pt] (0,0) -- (-1,1);
		\draw[line width=1pt] (0,0) -- (1,1);
		\draw[line width=1pt] (0,0) -- (2,1);
		
		\draw[fill=black] (0,0) circle (1.6pt) node[anchor=north] {0};
		\draw[fill=black] (-2,1) circle (1.6pt) node[anchor=east] {$x_1$};
		\draw[fill=black] (-1,1) circle (1.6pt) node[anchor=west] {$x_2$};
		\draw[fill=black] (1,1) circle (1.6pt) node[anchor=west] {$x_3$};
		\draw[fill=black] (2,1) circle (1.6pt) node[anchor=west] {$x_4$};
		
		\draw[line width=1pt] (0,4) -- (-2,3);
		\draw[line width=1pt] (0,4) -- (-1,3);
		\draw[line width=1pt] (0,4) -- (1,3);
		\draw[line width=1pt] (0,4) -- (2,3);
		
		\draw[fill=black] (0,4) circle (1.6pt) node[anchor=south] {1};
		\draw[fill=black] (-2,3) circle (1.6pt) node[anchor=east] {$x'_1$};
		\draw[fill=black] (-1,3) circle (1.6pt) node[anchor=west] {$x'_2$};
		\draw[fill=black] (1,3) circle (1.6pt) node[anchor=west] {$x'_3$};
		\draw[fill=black] (2,3) circle (1.6pt) node[anchor=west] {$x'_4$};
		
		\draw[line width=1pt] (-2,1) -- (-1,3);
		\draw[line width=1pt] (-2,1) -- (1,3);
		\draw[line width=1pt] (-2,1) -- (2,3);
		\draw[line width=1pt] (-1,1) -- (-2,3);
		\draw[line width=1pt] (-1,1) -- (1,3);
		\draw[line width=1pt] (-1,1) -- (2,3);
		\draw[line width=1pt] (1,1) -- (-2,3);
		\draw[line width=1pt] (1,1) -- (-1,3);
		\draw[line width=1pt] (1,1) -- (2,3);
		\draw[line width=1pt] (2,1) -- (-2,3);
		\draw[line width=1pt] (2,1) -- (-1,3);
		\draw[line width=1pt] (2,1) -- (1,3);
		
		\draw[
		line width=1pt,
		black,
		line cap=round,
		dash pattern=on 0pt off 2pt
		] (2.6, 1) -- (3.5, 1);
		\draw[
		line width=1pt,
		black,
		line cap=round,
		dash pattern=on 0pt off 2pt
		] (2.6, 3) -- (3.5, 3);
	\end{tikzpicture}
\end{center}
\vspace{-.25in}

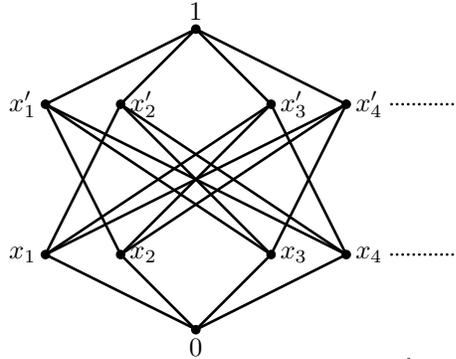
\captionof{figure}{A quasicomplemented poset $Q$ in $\mathbb{P}^{\ell}_{MFP}$ without $a.c.$}
\label{figure2}
	
	Let $Q$ be a poset under set inclusion, where $Q=\{X\subseteq \mathbb{N}| \text{ either } X=\{n\} \text{ or } X=\mathbb{N}\setminus\{n\} \text{ for every } n \in \mathbb{N}\}$. The Hasse diagram of $Q$ is shown in Figure 2. It is easy to observe that, $Q$ is quasicomplemented (and hence, weakly quasicomplemented) without $a.c.$, and $Min(Q)$ is not compact.
\end{rem}
Thus, we raise the following conjecture,
\begin{cthm*} Let $Q$ be a quasicomplemented poset in $\mathbb{P}^{\ell}_{MFP}$ with  $a.c.$ Is $Min(Q)$ compact?
\end{cthm*}

\section{Applications}

In this section, we show that the zero-divisor graph of a poset provides a powerful tool for visualizing and analyzing the structure of graphs associated with rings, in particular comaximal (ideal) graphs and zero-divisor graphs. Using Main Theorem, we prove that the comaximal (ideal) graphs and zero-divisor graphs of Artinian rings are always complemented (respectively, uniquely complemented).

Let $R$ be a commutative ring with identity, and $Id(R)$ denotes the set of all ideals of a ring $R$. As we know, $Id(R)$ forms a complete lattice with respect to the inclusion as the partial order and $(0)$ and $R$ as its least and the greatest element, respectively.

\begin{defn}[Ye and Wu \cite{YW}]
	Let $R$ be a commutative ring with identity. We associate a simple undirected graph with $R$, called as the \textit{comaximal ideal graph} $\mathbb{CIG}(R)$ of $R$, where the vertices of $\mathbb{CIG}(R)$
	are proper ideals of $R$ that are not contained in the Jacobson radical $J(R)$ of $R$ and two vertices $I$ and	$J$ are adjacent if and only if $I + J = R$.
\end{defn}

In \cite{KJ}, Khandekar and Joshi proved that the comaximal ideal graph of commutative ring $R$ is nothing but the zero-divisor graph of the dual of the lattice $L=Id(R)$, where $Id(R)$ is the set of all ideals of $R$. Note that the dual of lattice $L$, denoted by $L^\partial$, is a $0$-distributive lattice.

\begin{thm}[Khandekar et al. \cite{KJ}, Theorem 5.1 \label{kj}] Let $R$ be a commutative ring with identity and let $L^\partial$ be the dual of the lattice $Id(R)$ of all ideals of $R$. Then $\mathbb{CIG}(R) = \Gamma(L^\partial).$
\end{thm}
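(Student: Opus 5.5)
The plan is to unwind the definition of $\Gamma(L^\partial)$ in terms of ideals of $R$. We then check separately that the two graphs have the same vertex set and the same adjacency relation. No earlier result of the paper is needed beyond the definitions, plus the standard fact that every proper ideal of a commutative ring with identity lies in a maximal ideal.

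\textbf{Step 1: translating the poset notions.} In $L^\partial$ the order is reverse inclusion, so the least element $0$ of $L^\partial$ is the ideal $R$. For ideals $I,J$, the lower cone $\{I,J\}^{\ell}$ computed in $L^\partial$ is the set of all ideals $K$ with $I\subseteq K$ and $J\subseteq K$. This is exactly the set of ideals containing $I+J$. Hence $\{I,J\}^{\ell}=\{0\}$ in $L^\partial$ holds if and only if the only ideal containing $I+J$ is $R$, that is, if and only if $I+J=R$.

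\textbf{Step 2: adjacency.} By Step 1, two distinct vertices of $\Gamma(L^\partial)$ are adjacent exactly when $I+J=R$. This is the adjacency relation of $\mathbb{CIG}(R)$, so the two graphs agree once their vertex sets agree.

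\textbf{Step 3: vertex sets.} A vertex of $\Gamma(L^\partial)$ is an ideal $I\neq R$ for which there exists an ideal $J\neq R$ with $I+J=R$. I would show this is equivalent to $I$ being a proper ideal with $I\not\subseteq J(R)$.
\begin{itemize}
\item Suppose $I\not\subseteq J(R)$. Choose a maximal ideal $M$ with $I\not\subseteq M$. Then $I+M$ is an ideal properly containing the maximal ideal $M$, so $I+M=R$. Taking $J=M$, which is proper, shows $I$ is a nonzero zero-divisor of $L^\partial$.
\item Conversely, suppose $I+J=R$ with $J$ proper. Choose a maximal ideal $M\supseteq J$. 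If $I\subseteq M$, then $R=I+J\subseteq M$, which is impossible. So $I\not\subseteq M$, and therefore $I\not\subseteq J(R)$.
\end{itemize}
In both directions $I\neq R$, i.e., $I$ is proper. Hence $V(\Gamma(L^\partial))=V(\mathbb{CIG}(R))$.

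The only point needing care is Step 3, specifically the use of a maximal ideal containing a given proper ideal (Zorn's lemma). The remaining steps are a direct dualization of the definitions.
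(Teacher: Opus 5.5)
Your proof is correct and complete. Reversing inclusion turns $\{I,J\}^{\ell}=\{0\}$ in $L^\partial$ into $I+J=R$. The vertex sets then agree because $J(R)$ is the intersection of the maximal ideals (first inclusion) and every proper ideal lies in a maximal ideal (second inclusion).

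The paper gives no proof of this statement; it is quoted from \cite{KJ}, so there is no in-paper argument to compare against. Your direct check from the definitions supplies what the citation leaves implicit. It also confirms that the equality is literal (same vertex set and same edge set), which is what the paper relies on when it transfers complementedness from $\Gamma(Id(R)^\partial)$ to $\mathbb{CIG}(R)$.
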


The following discussion can be found in Khandekar et al. \cite{KJ}. 
Let $R$ be an Artinian ring. Then $R =\displaystyle \prod_{i=1}^{n}{R_i}$, where $R_i$
is an Artinian local ring for every $i$. It is easy to observe that $R$ has identity if and only if $R_i$ has identity for all $i$. Hence $Id(R) = Id(R_1) \times Id(R_2) \times \cdots \times Id(R_n)$, where
$Id(R_1) \times Id(R_2) \times \cdots \times Id(R_n)$ denotes the (lattice) direct product of the ideal lattices $Id(R_i)$ of $R_i$.
Hence, $Id(R)^\partial = Id(R_1)^\partial \times Id(R_2)^\partial \times \cdots \times  Id(R_n)^\partial$. Clearly from Theorem \ref{2.3}, Theorem \ref{2.4'} and Lemma \ref{3.8}, $Id(R_i)$ for each $i$, is a dually pseudocomplemented lattice and hence $\displaystyle \prod_{i=1}^{n}{Id(R_i)}$ is also a dually pseudocomplemented lattice, that is, the dual concept of a pseudocomplemented lattice. Thus, $Id(R)^\partial$ is a pseudocomplemented lattice and hence a quasicomplemeted lattice. By Main theorem, $\Gamma(Id(R)^\partial)$ is a complemented and therefore by Theorem \ref{kj} comaximal ideal graph $\mathbb{CIG}(R)$ is always a complemented, equivalently a uniquely complemented graph. 

\begin{thm} Let $R$ be an Artinian ring. The comaximal ideal graph $\mathbb{CIG}(R)$ is a complemented graph, (equivalently a uniquely complemented graph).  
\end{thm}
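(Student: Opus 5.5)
The plan is to realize $\mathbb{CIG}(R)$ as a zero-divisor graph of a lattice and then apply the Main Theorem. By Theorem \ref{kj}, $\mathbb{CIG}(R)=\Gamma(L^\partial)$, where $L=Id(R)$. So it suffices to show two things: that $L^\partial$ lies in $\mathbb{P}^{\ell}_{MFP}$, and that $L^\partial$ is quasi-complemented. Then the implication $(1)\Rightarrow(6)\Leftrightarrow(7)$ of the Main Theorem, valid for any $Q\in\mathbb{P}^{\ell}_{MFP}$, gives that $\Gamma(L^\partial)$ is complemented and uniquely complemented.

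Since $R$ is Artinian, I will write $R=\prod_{i=1}^n R_i$ with each $R_i$ an Artinian local ring. Then $Id(R)\cong\prod_{i=1}^n Id(R_i)$, and hence $Id(R)^\partial\cong\prod_{i=1}^n Id(R_i)^\partial$.

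The key step is to show that each $Id(R_i)^\partial$ is pseudocomplemented. In $Id(R_i)^\partial$, the zero is $R_i$ and the meet is the ideal sum. Because $R_i$ is local with unique maximal ideal $\mathfrak m_i$, for proper ideals $I,J$ we have $I+J\subseteq\mathfrak m_i\neq R_i$. So two elements of the dual lattice have meet equal to zero only when one of them is $R_i$ itself. Consequently the pseudocomplement of $R_i$ (the dual zero) is $(0)$ (the dual top), and the pseudocomplement of every other element is $R_i$ (the dual zero). This verifies pseudocomplementation directly.

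A finite product of pseudocomplemented lattices is pseudocomplemented, with pseudocomplements taken componentwise; this is a routine check that $\{x,y\}^\ell=\{0\}$ holds componentwise. Hence $L^\partial$ is pseudocomplemented. By Theorem \ref{2.4'}, $L^\partial$ is $0$-distributive. Since it is a lattice, Theorem \ref{2.3} places it in $\mathbb{P}^{\ell}_{MFP}$. By Lemma \ref{3.8}, it is quasi-complemented. Applying the Main Theorem then finishes the proof.

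The main obstacle is the structural input rather than the lattice theory: the decomposition of an Artinian ring into finitely many local factors, and the compatibility of the ideal lattice with finite products. I will quote both as standard facts. One degenerate case must be checked separately. If $n=1$, i.e. $R$ is local, then $J(R)=\mathfrak m$ contains every proper ideal, so the graph has no vertices and is vacuously complemented. This case is consistent with the general argument.
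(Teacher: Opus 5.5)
Your proposal is correct and follows essentially the same route as the paper. Both arguments decompose $R$ into Artinian local factors, show that $Id(R)^\partial\cong\prod_i Id(R_i)^\partial$ is pseudocomplemented (hence $0$-distributive, so in $\mathbb{P}^{\ell}_{MFP}$, and quasi-complemented), and then apply the Main Theorem together with $\mathbb{CIG}(R)=\Gamma(L^\partial)$. Your direct check, via locality, that each $Id(R_i)^\partial$ is pseudocomplemented in fact supplies the justification for the step the paper only attributes to Theorems \ref{2.3}, \ref{2.4'} and Lemma \ref{3.8}.
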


In 1995, Sharma and Bhatwadekar \cite{SB} introduced a graph
$\mathbb{CG}(R)$ on a commutative ring $R$ with identity, whose vertices are
the elements of $R$ and two distinct vertices $x$ and $y$ are adjacent
if and only if $Rx + Ry = R$.
Maimani et al. \cite{MSSY}	named this graph $\mathbb{CG}(R)$ as the \textit{comaximal graph} of $R$.
\begin{thm}[Ghadge et al.\cite{GKJ}, Theorem 3.16 \label{pg}]
	Let $R$ be a finite commutative ring with identity.	Then $\mathbb{CG}(R)=\Gamma(\mathbb{L})$.  
\end{thm}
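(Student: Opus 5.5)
Since $\mathbb{L}$ is only specified in Ghadge et al.\ \cite{GKJ}, I will first fix a concrete model of it and then prove the identity $\mathbb{CG}(R)=\Gamma(\mathbb{L})$ by comparing vertex sets and edges separately. The model I would take is the one suggested by Theorem \ref{kj}. Let $\mathbb{L}$ be the poset whose points are the non-unit elements of $R$ together with one extra point $\hat 0$ that represents all the units. The order is: $\hat 0$ is the least element, and for non-units $x\neq y$ we put $x\le y$ if and only if $Ry\subsetneqq Rx$, \emph{or} $x,y$ lie in a common fibre of a fixed linear order chosen inside each class $\{z : Rz=I\}$. The linear order inside each class is there only to make the relation antisymmetric, so that distinct ring elements generating the same ideal remain distinct points, exactly as they are distinct vertices of $\mathbb{CG}(R)$. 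This is the dual order of $Id(R)$, ``blown up'' along principal ideals. I expect that this is what \cite{GKJ} intends, up to the convention of deleting units and elements of $J(R)$, which are the isolated or universal vertices of $\mathbb{CG}(R)$.

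\textbf{Step 1: edges.} I would show that for non-units $x\neq y$ we have $\{x,y\}^{\ell}=\{\hat 0\}$ in $\mathbb{L}$ if and only if $Rx+Ry=R$. For the direction ``$\Leftarrow$'', a lower bound $z\neq\hat 0$ would satisfy $Rx+Ry\subseteq Rz\neq R$, which is a contradiction. For the direction ``$\Rightarrow$'', suppose $Rx+Ry\neq R$; then I must produce a non-unit $z$ with $Rx+Ry\subseteq Rz$. Since $R$ is finite, it is Artinian, so $R\cong\prod_{i=1}^n R_i$ with each $R_i$ local. Because $Rx+Ry$ is proper, there is some index $i$ at which both components $x_i$ and $y_i$ are non-units. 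I would take $z=(1,\dots,1,m,1,\dots,1)$, where $m$ is a generator of an ideal of $R_i$ containing $x_i$ and $y_i$.

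\textbf{Step 2: vertices.} With this description, $x$ is a nonzero zero-divisor of $\mathbb{L}$ exactly when $x$ is comaximal with some non-unit. Using the product decomposition again, this holds exactly when some component $x_i$ is a unit, that is, when $x\notin J(R)$. This matches the vertex set of $\mathbb{CG}(R)$ once units and elements of $J(R)$ are removed. The map $x\mapsto x$ then gives the graph isomorphism.

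\textbf{Main obstacle.} The hard part is Step 1, ``$\Rightarrow$'': the local factor $R_i$ need not have a principal ideal containing both $x_i$ and $y_i$, since its maximal ideal may fail to be principal. If that happens, I would instead enlarge $\mathbb{L}$ to include all proper ideals as additional points, ordered by reverse inclusion, so that the maximal ideals themselves become lower bounds. The lower-bound computation is then exactly the one in Theorem \ref{kj}. The price is that I must check that these added ideal-points are not themselves zero-divisors contributing extra vertices, or else that \cite{GKJ} counts them in the same way.
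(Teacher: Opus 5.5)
The paper does not prove this statement. It is imported from \cite{GKJ}, $\mathbb{L}$ is defined only there, and the only property the paper uses is that $\mathbb{L}$ is a pseudocomplemented lattice. Judged on its own, your proposal has a genuine gap, and it is the one you flag. With your order, direction ``$\Rightarrow$'' of Step 1 fails whenever $Rx+Ry$ is proper but lies in no proper principal ideal, and this happens for finite rings.

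Take $R=\mathbb{F}_2[s,t]/(s,t)^2$, which is local with $\mathfrak{m}=J(R)=\{0,s,t,s+t\}$. Here $Rs=\{0,s\}$, $Rt=\{0,t\}$ and $R(s+t)=\{0,s+t\}$. So your poset is $\hat 0<s,t,s+t<0_R$, which is $M_3$, and $\Gamma(\mathbb{L})$ is a triangle on $\{s,t,s+t\}$. But no two non-units of a local ring are comaximal, and all of them lie in $J(R)$, so the comaximal graph is empty. Step 2 fails with it, since $s\in J(R)$ becomes a vertex. Moreover $M_3$ is not $0$-distributive (the paper notes $M_3\notin\mathbb{P}^{\ell}_{MFP}$), so your model cannot be the pseudocomplemented lattice the paper needs.

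The fallback does not help, because adjoining ideal-points creates vertices with no ring counterpart. In $R_1\times\mathbb{F}_2$ with $R_1$ as above, the non-principal maximal ideal $\mathfrak{m}_1\times\mathbb{F}_2$ has only $\hat 0$ and itself below it. Hence $\{\mathfrak{m}_1\times\mathbb{F}_2,(1,0)\}^{\ell}=\{\hat 0\}$, and it is a new vertex of $\Gamma$. This is exactly the ``price'' you left unchecked.

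The missing idea is that comaximality in a finite ring sees only maximal ideals, not principal ideals. Write $R\cong\prod_{i=1}^n R_i$ with $(R_i,\mathfrak{m}_i)$ local, and set $N(x)=\{i : x_i\in\mathfrak{m}_i\}$. Then $Rx+Ry=R$ if and only if $N(x)\cap N(y)=\emptyset$. So define the order through $N(x)$, which amounts to comparing $\sqrt{Rx}$ instead of $Rx$. Collapse all units to $\hat 0$. For non-units put $x<y$ if $N(x)\subsetneqq N(y)$, or if $N(x)=N(y)$ and $x$ precedes $y$ in a fixed linear order of that fibre.

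This is the Boolean lattice $\mathbf{2}^n$ with each nonempty subset blown up into a chain. Every fibre is nonempty: put $0$ on the coordinates in the subset and $1$ elsewhere. The result is a pseudocomplemented lattice.

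With this order, Step 1 goes through. A common lower bound $z\neq\hat 0$ forces $\emptyset\neq N(z)\subseteq N(x)\cap N(y)$. Conversely, if $i\in N(x)\cap N(y)$, the least element of the fibre $N^{-1}(\{i\})$ lies below both $x$ and $y$. Step 2 also goes through: the nonzero zero-divisors are the non-units with $N(x)\neq\{1,\dots,n\}$, that is, $x\notin J(R)=\prod_i\mathfrak{m}_i$.

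Your original argument is fine when every $\mathfrak{m}_i$ is principal (e.g. $R=\mathbb{Z}_n$), after replacing your $z$ by the least element of its fibre. It fails as soon as some $\mathfrak{m}_i$ needs two generators. By Nakayama, two minimal generators of $\mathfrak{m}_i$ then lie in no common proper principal ideal.
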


In \cite{GKJ}, it is proved that the comaximal graph of an Artinian ring is the zero-divisor graph of a specially constructed lattice $\mathbb{L}$. The construction of $\mathbb{L}$ is given in \cite[Construction 3.6]{GKJ}. By \cite[Corollary 3.9 and Theorem 3.16]{GKJ}, it is clear that $\mathbb{L}$
is a pseudocomplemented lattice and hence it is a quasicomplemeted lattice. By  Main theorem, $\Gamma(\mathbb{L})$ is complemented, and therefore by Theorem \ref{pg} the comaximal graph $\mathbb{CG}(R)$ is always  complemented, equivalently a uniquely complemented graph. 

\begin{thm} Let $R$ be an Artinian ring. The comaximal graph $\mathbb{CG}(R)$ is a complemented, (equivalently a uniquely complemented graph).  
\end{thm}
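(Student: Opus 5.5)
The plan is to reduce the statement to the Main Theorem via Theorem \ref{pg}, exactly as was done for $\mathbb{CIG}(R)$. Let $R$ be an Artinian ring. Theorem \ref{pg} is stated for finite commutative rings with identity, so first we must make sure the identification $\mathbb{CG}(R)=\Gamma(\mathbb{L})$ is available in our setting. Here we rely on the discussion in \cite{GKJ}: there $\mathbb{L}$ is built from the decomposition $R=\prod_{i=1}^n R_i$ into Artinian local rings (Construction 3.6). For $x\in R$, the only information relevant for comaximality is the set of maximal ideals containing $x$. Concretely, $Rx+Ry=R$ if and only if no maximal ideal $\mathfrak m_i$ contains both $x$ and $y$. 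This already holds in the Artinian case, because an Artinian ring has only finitely many maximal ideals and each $R_i$ is local.

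The next step is to record that $\mathbb{L}$ is pseudocomplemented, citing \cite[Corollary 3.9 and Theorem 3.16]{GKJ}. As a sanity check, one can picture $\mathbb{L}$ as essentially built from the finite Boolean lattice $2^n$ of subsets of maximal ideals, with elements indexed by ring elements. In this picture, the lower cone of $\{x,y\}$ is $\{0\}$ exactly when the two corresponding sets of non-containing maximal ideals are disjoint. The pseudocomplement of an element then corresponds to the complementary set of maximal ideals. By Lemma \ref{3.8}, $\mathbb{L}$ is therefore quasi-complemented. By Theorem \ref{2.4'} it is $0$-distributive, and since it is a lattice, Theorem \ref{2.3} gives $\mathbb{L}\in\mathbb{P}^{\ell}_{MFP}$.

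We can then invoke the Main Theorem in its lattice form (equivalently Corollary \ref{1.12'}): condition (1) holds, so (6) holds, that is, $\Gamma(\mathbb{L})$ is complemented. Theorem \ref{1.11} upgrades this to uniquely complemented. Transporting through $\mathbb{CG}(R)=\Gamma(\mathbb{L})$ yields the statement.

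The main obstacle is the bookkeeping in the identification itself. Vertices of $\Gamma(\mathbb{L})$ are only the nonzero zero-divisors, whereas $\mathbb{CG}(R)$ has every element of $R$ as a vertex, and units are adjacent to everything. One has to check that the convention in \cite{GKJ} (which vertices are kept, and how units are treated) matches the graph under discussion. If it does not match, one should argue directly instead. Take a vertex $x$, let $S$ be the set of maximal ideals not containing $x$, and choose $y$ with exactly the complementary pattern, using the Chinese remainder theorem on $\prod R_i$ (for instance $y$ with components $1$ or $0$). Then $x\perp y$, since any $z$ comaximal with both would lie outside every maximal ideal, i.e.\ be a unit. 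If units are admitted as common neighbours, this must be handled separately.
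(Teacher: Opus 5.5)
Your main line is the paper's own proof. You identify $\mathbb{CG}(R)$ with $\Gamma(\mathbb{L})$ via Theorem \ref{pg} and use that $\mathbb{L}$ is pseudocomplemented. That makes it quasi-complemented by Lemma \ref{3.8}, and in $\mathbb{P}^{\ell}_{MFP}$ by Theorems \ref{2.4'} and \ref{2.3}, a hypothesis check the paper leaves implicit. You then apply the Main Theorem and Theorem \ref{1.11}. Your finite-versus-Artinian worry applies equally to the paper, which quotes Theorem \ref{pg} only for finite rings but uses it for Artinian ones.

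Two of your side remarks need correcting.

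\textbf{Units.} The hedge that units ``must be handled separately'' cannot succeed. Take the vertex set to be all of $R$, as in the definition quoted in the paper. Then units are universal vertices and the conclusion is false. For instance $\mathbb{CG}(\mathbb{Z}_3)=K_3$. In $\mathbb{Z}_2\times\mathbb{Z}_2$, the vertex $(1,0)$ has only the neighbours $(0,1)$ and $(1,1)$; these are adjacent to each other, so neither is a complement. So the statement, and the identification with $\Gamma(\mathbb{L})$ (whose vertices are the nonzero zero-divisors), only make sense for the graph on non-units outside $J(R)$.

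For that graph your Chinese-remainder argument is already a complete proof. The conditions $x\notin J(R)$ and $x$ a non-unit guarantee that the element $y$ with complementary pattern is again a vertex. It works for any ring with finitely many maximal ideals, without \cite{GKJ} or the Main Theorem. Uniqueness also follows directly: if a complement $y'$ of $x$ omitted some maximal ideal $\mathfrak m_k$ from both patterns, an element lying only in $\mathfrak m_k$ would be a common neighbour. So every complement of $x$ has exactly the complementary pattern.

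\textbf{Sanity check.} In your sanity check the sets are swapped. $Rx+Ry=R$ means the sets of maximal ideals \emph{containing} $x$ and $y$ are disjoint, not the non-containing ones. With three maximal ideals, an $x$ lying only in $\mathfrak m_1$ and a $y$ lying only in $\mathfrak m_2$ are comaximal, although both avoid $\mathfrak m_3$. This picture is not used later, so nothing breaks.
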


 Angsuman Das \cite{AD} defined and studied the \textit{nonzero component union graph} (also known as \textit{skeleton graph}) $\mathbb{UG(V)}$ of a finite dimensional vector space $\mathbb{V}$ over the field $\mathbb{F}$ with respect to the basis $\mathcal{B} = \{v_1, \cdots , v_n\}$ as follows: The vertex set of graph $\mathbb{UG(V)}$ is $\mathbb{V} \setminus \{0\}$ and for any $a, b \in \mathbb{V} \setminus \{0\}$, $a$ is adjacent to $b$ if and only if $S_\mathcal{B}(a) \cup S_\mathcal{B}(b) = \mathcal{B}$, where $S_\mathcal{B}(a)=\{v_i ~|~ a_i \neq 0, ~a = a_1v_1 +\cdots + a_nv_n\}$.
 
  Recently, Khandekar et al. \cite{KCJ} proved that the nonzero component union graph of a finite dimensional vector spaces can be studied via the zero-divisor graphs of posets. For this result, we need the concept of join of graphs.
	
	The \textit{join of two graphs} $G$ and $H$ is the graph formed from disjoint copies of $G$ and $H$ by connecting each vertex of $G$ to each vertex of $H$. We denote the join of graphs $G$ and $H$
	by $G \vee H$; see West \cite{W}.

\begin{thm}[Khandekar et al. \cite{KCJ}, Theorem 3.4] $\mathbb{UG(V)} = \Gamma(\mathcal{L^\partial}) \vee K_t$, where $t = |\mathbb{V}_{12 \cdots n}| = (|F|-1)^n$ and $K_t$ is a complete graph on $t$ vertices.
\end{thm}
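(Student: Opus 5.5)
The statement is quoted from Khandekar et al.\ \cite{KCJ}, so the plan is to reconstruct the poset $\mathcal{L}$ and then compare the two graphs vertex by vertex and edge by edge.

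\textbf{The poset $\mathcal{L}$.} I would take $\mathcal{L}$ to be the following poset.
\begin{enumerate}
\item[(i)] Its elements are the zero vector $0$, every nonzero vector $a\in\mathbb{V}$ with $S_\mathcal{B}(a)\neq\mathcal{B}$, and one extra element $1$. The element $1$ stands for the class $\mathbb{V}_{12\cdots n}$ of the $t=(|F|-1)^n$ vectors of full support.
\item[(ii)] The order is $a\le b$ if and only if $a=b$ or $S_\mathcal{B}(a)\subsetneqq S_\mathcal{B}(b)$.
\item[(iii)] We declare $0$ to be the least element and $1$ the greatest element.
\end{enumerate}
Passing to the dual $\mathcal{L}^\partial$, the element $1$ becomes the least element. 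Moreover, for $a,b$ in $\mathcal{L}^\partial$ we have $\{a,b\}^{\ell}$ computed in $\mathcal{L}^\partial$ equal to $\{a,b\}^{u}$ computed in $\mathcal{L}$. Hence $a,b$ are adjacent in $\Gamma(\mathcal{L}^\partial)$ exactly when $\{a,b\}^u=\{1\}$ in $\mathcal{L}$.

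\textbf{Key lemma.} For distinct non-full-support nonzero vectors $a,b$, we have $\{a,b\}^u=\{1\}$ in $\mathcal{L}$ if and only if $S_\mathcal{B}(a)\cup S_\mathcal{B}(b)=\mathcal{B}$.
\begin{enumerate}
\item[(a)] If the union is all of $\mathcal{B}$, any $c\neq 1$ with $a,b\le c$ would need $S_\mathcal{B}(c)\supseteq S_\mathcal{B}(a)\cup S_\mathcal{B}(b)=\mathcal{B}$. That is impossible, since every element other than $1$ has proper support.
\item[(b)] Conversely, suppose the union $U$ is a proper subset of $\mathcal{B}$.
   \begin{enumerate}
   \item[(b1)] If $U$ strictly contains both supports, any vector with support exactly $U$ lies in $\{a,b\}^u$.
   \item[(b2)] If $U=S_\mathcal{B}(a)$, then $b\le a$, because either $S_\mathcal{B}(b)\subsetneqq S_\mathcal{B}(a)$ or the supports are equal. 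In the second case I instead choose a vector whose support strictly contains $U$ but is still proper. Such a vector exists as long as $U\neq\mathcal{B}$, and this needs care when $|U|=n-1$.
   \end{enumerate}
\end{enumerate}

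\textbf{Main obstacle.} The boundary case in (b2) is where I expect trouble. If $S_\mathcal{B}(a)=S_\mathcal{B}(b)$ has size $n-1$ and $a\ne b$, then $\{a,b\}^u=\{1\}$, so $a$ and $b$ would be adjacent in $\Gamma(\mathcal{L}^\partial)$ but not in $\mathbb{UG(V)}$. I would fix this by changing the order to a preorder-free version in which vectors with the same support are mutually comparable, for instance by ordering them linearly inside each support class. Equivalently, I would follow the exact construction of \cite{KCJ}, where equal-support vectors must share a common upper bound below $1$. I expect verifying this lemma to be the only delicate point.

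\textbf{Identifying vertices and edges.}
\begin{enumerate}
\item[(1)] The zero vector becomes the top of $\mathcal{L}^\partial$. It is not a zero-divisor, and it is not a vertex of $\mathbb{UG(V)}$ either.
\item[(2)] Every nonzero vector $a$ with proper support is a zero-divisor of $\mathcal{L}^\partial$. Indeed, a vector with support $\mathcal{B}\setminus S_\mathcal{B}(a)$ exists (that set is nonempty), and by the lemma it is adjacent to $a$. So the vertex set of $\Gamma(\mathcal{L}^\partial)$ is exactly the set of nonzero vectors of proper support.
\item[(3)] By the lemma, the edges of $\Gamma(\mathcal{L}^\partial)$ among these vertices coincide with the edges of $\mathbb{UG(V)}$ among them.
\item[(4)] The $t$ full-support vectors satisfy $S_\mathcal{B}(c)\cup S_\mathcal{B}(x)=\mathcal{B}$ for every vertex $x$. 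Hence they are adjacent to each other, forming $K_t$, and adjacent to every other vertex, which is precisely the join $\Gamma(\mathcal{L}^\partial)\vee K_t$.
\end{enumerate}
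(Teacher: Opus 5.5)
\textbf{Gap in the write-up.} As written, your proof is carried out for the order (ii), and for that order the Key lemma is false whenever $n\ge 2$ and $|F|\ge 3$. Two distinct vectors with the same support $S$ of size $n-1$ have no upper bound other than $1$. So they are adjacent in $\Gamma(\mathcal{L}^\partial)$, although $S\cup S\neq\mathcal{B}$.

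For instance, take $n=2$ and $F=\mathbb{Z}_3$. Your $\mathcal{L}$ is $0$, the four pairwise incomparable elements $v_1,2v_1,v_2,2v_2$, and $1$. Then $\Gamma(\mathcal{L}^\partial)=K_4$, whereas the non-full-support part of $\mathbb{UG(V)}$ is $K_{2,2}$.

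You diagnosed this correctly, but the repair has to be the definition of $\mathcal{L}$, not an afterthought. Your ``equivalently'' is also too loose. The common upper bound below $1$ cannot be a newly added element: if $a$ is a vertex and $c\ge a$ with $c\neq 1$, then $\{c,y\}^u\subseteq\{a,y\}^u$ for every $y$. Hence $c$ is itself a vertex of $\Gamma(\mathcal{L}^\partial)$, i.e.\ an extra vertex not present in $\mathbb{UG(V)}$. The common upper bound must therefore be one of the vectors of that support class.

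\textbf{The repair works.} Fix a linear order $\prec$ on each class $\{a : S_\mathcal{B}(a)=S\}$ with $\emptyset\neq S\subsetneqq\mathcal{B}$. Set $a\le b$ if and only if one of the following holds: $a=b$; or $S_\mathcal{B}(a)\subsetneqq S_\mathcal{B}(b)$; or $S_\mathcal{B}(a)=S_\mathcal{B}(b)$ and $a\prec b$. Keep $0$ least and $1$ greatest.
\begin{enumerate}
\item[(i)] This is a partial order, and it still satisfies $a\le c\Rightarrow S_\mathcal{B}(a)\subseteq S_\mathcal{B}(c)$. So your part (a) is unchanged.
\item[(ii)] In part (b) the boundary case disappears. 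If $S_\mathcal{B}(a)\subseteq S_\mathcal{B}(b)$ (equality allowed), then $a,b$ are comparable and the larger one is an upper bound different from $1$. If the supports are incomparable, any vector of support $U$ is such an upper bound.
\item[(iii)] Your steps (1)--(4) then go through verbatim.
\end{enumerate}

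\textbf{What the chain version buys.} The paper gives no proof of its own; it only cites \cite{KCJ}. But the chain version is also what the paper needs immediately afterwards. With it, $\mathcal{L}$ is a lattice and $\mathcal{L}^\partial$ is pseudocomplemented: the pseudocomplement in $\mathcal{L}^\partial$ of a nonzero vector $a$ of proper support is the $\prec$-least vector of support $\mathcal{B}\setminus S_\mathcal{B}(a)$. By contrast, your antichain order (ii) is not even a lattice for $n\ge 3$ and $|F|\ge 3$, since two distinct vectors of support $\{v_1\}$ have several minimal upper bounds. The chain device is the same as in the LaGrange--Roy order \cite{LR} recalled before Theorem~\ref{2.1}, where each annihilator class is well-ordered.
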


In the above Theorem, a lattice $\mathcal{L}^\partial$ is constructed which is pseudocomplemented and hence quasicomplemented. Therefore by Main Theorem, $\Gamma(\mathcal{L^\partial})$ is a complemented graph. However, since $\mathbb{UG(V)}$ is the join of $ \Gamma(\mathcal{L^\partial})$ and the complete graph  $K_t$,  it is easy to observe that  $\Gamma(\mathcal{L^\partial}) \vee K_t$ is not a complemented graph. Therefore, $\mathbb{UG(V)}$ is not  complemented, and hence not a uniquely complemented graph. 

\begin{thm}
	The nonzero component union graph $\mathbb{UG(V)}$ of a finite dimensional vector space $\mathbb{V}$ over a field $\mathbb{F}$ with respect to the basis $\mathcal{B}$ is neither  complemented nor  uniquely complemented. 
\end{thm}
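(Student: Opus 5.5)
The plan is to work directly with the description $\mathbb{UG(V)} = \Gamma(\mathcal{L^\partial}) \vee K_t$ (Khandekar et al.), or equivalently with the definition of $\mathbb{UG(V)}$. I will exhibit one vertex that has no complement. The vertices of $K_t$ are the full-support vectors, that is, those $a$ with $S_\mathcal{B}(a)=\mathcal{B}$. Such an $a$ is adjacent to every other vertex of $\mathbb{UG(V)}$. So I fix a full-support vector $a$, for instance $a=v_1+\cdots+v_n$, and show that no vertex $b$ satisfies $a\bot b$.

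First I reduce the condition $a\bot b$. Every vertex $c\neq a,b$ is adjacent to $a$. Hence $a\bot b$ holds exactly when $b$ has no neighbour other than $a$. It therefore suffices to find, for each vertex $b\neq a$, some vertex $c\notin\{a,b\}$ adjacent to $b$. I split into two cases according to $S_\mathcal{B}(b)$.

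\emph{Case $S_\mathcal{B}(b)=\mathcal{B}$.} Then $b$ is itself full-support, so $b$ is adjacent to everything. If $n\geq 2$, the vector $c=v_1$ is not full-support, so $c\notin\{a,b\}$, and $c$ is adjacent to $b$. If $n=1$ and $t=|F|-1\ge 3$, a third nonzero scalar multiple of $v_1$ serves as $c$. The remaining possibilities are $n=1$ with $t\le 2$. If $t=1$, the graph is $K_1$ and has no complemented vertex at all. If $t=2$, the graph is $K_2$; this is a genuine exception, so the statement must implicitly assume a nondegenerate setting. I would note this hypothesis explicitly, for example $n\ge 2$.

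\emph{Case $S:=S_\mathcal{B}(b)\subsetneq\mathcal{B}$.} I take $c=\sum_{v_i\in\mathcal{B}\setminus S}v_i$. This vector is nonzero. Moreover $c\neq b$, because their supports are disjoint and both are nonempty. Also $c\neq a$ unless $S=\emptyset$, which is impossible since $b\neq 0$. Since $S\cup(\mathcal{B}\setminus S)=\mathcal{B}$, $c$ is adjacent to $b$. Because $c$ is also adjacent to $a$, the pair $a,b$ has a common neighbour, and $a\bot b$ fails.

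Thus $a$ has no complement, so $\mathbb{UG(V)}$ is not complemented. By the definition of uniquely complemented (or via Theorem \ref{1.11}), it is not uniquely complemented either. The main obstacle is only the degenerate small cases: $n=1$ with $|F|\le 3$. These must be either excluded by hypothesis or checked by hand. The generic argument, which uses the complementary-support vector, is routine.
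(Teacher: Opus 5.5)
Your proof is correct and is the paper's idea made explicit. The paper only cites the decomposition $\mathbb{UG(V)}=\Gamma(\mathcal{L}^\partial)\vee K_t$ and asserts that it is easy to see such a join is not complemented; its appeal to the Main Theorem to get $\Gamma(\mathcal{L}^\partial)$ complemented is not needed for that conclusion. You instead check directly from supports that a full-support vertex $a$ has no complement, using the complementary-support vector as a common neighbour of $a$ and any $b$. So you need neither the decomposition nor the Main Theorem.

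Your case analysis also catches something the paper overlooks. For $n=1$ and $|\mathbb{F}|=3$ the graph is $K_2$, which is complemented and indeed uniquely complemented. The statement is therefore false there and needs a hypothesis such as $n\geq 2$; for $n=1$ it holds over every other field.

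One small point: rely on the definition for the uniquely-complemented step. Theorem \ref{1.11} is stated for graphs of the form $\Gamma(Q)$, and $\mathbb{UG(V)}$ is not presented that way.
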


Now, we apply Main Theorem to the zero-divisor graphs of a reduced (multiplicative) semigroup $S$ with $0$, meaning that $S$ contains no non-zero nilpotent elements. Let $S$ be a semigroup with $0$ and $1$. We associate to the semigroup $S$ a simple undirected graph $G(S)$, the zero-divisor graph of $S$, with the vertex set	 $V(G(S)) = \{x\in S\setminus\{0\} ~|~ xy=0$ for some $y\in S\setminus\{0\}\}$
and two vertices $a$ and $b$ in $V(G(S))$ are adjacent if and only if $ab=0$.

Let $S$ be a reduced (multiplicative) commutative semigroup with $0\neq 1$. Define
a relation $\leq $ such that $r\leq s$ in $S$ if and only if either $ann(s) \subsetneqq ann(r) $ or $r \leq s$ in some
predetermined well-order on the set $[r] = \{x \in S ~|~ ann(r) = ann(x)\}$. Note that $ann(s) =\{x \in S\; |\; xs=0 \}$. LaGrange and Roy \cite[Remark 3.4]{LR} proved that $\leq $ is a partial order on $S$.
We recall the following result due to  Kadu et al. \cite{KJG}.

\begin{theorem}[Kadu et al. \cite{KJG}, Theorem 3.4]\label{2.1} Let $S$ be a reduced (multiplicative) commutative semigroup. Then the following statements are true.
	\begin{enumerate}\item	$(S; \leq ) $ is a meet-semilattice. \item  For $a, b \in S$, we have $ab = 0$ if and only if $a\wedge b = 0$. In this case, $ann(a)=a^{\perp}$. Therefore, the zero-divisor graph $G(S)$ of a semigroup $S$ and the zero-divisor graph $\Gamma (S)$ of $S$ (treated as a meet-semilattice) are essentially the same, that is, $\Gamma (S) = G(S)$. 
	\end{enumerate}
\end{theorem}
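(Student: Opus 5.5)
The plan is to prove the two parts of Theorem \ref{2.1} in order, working directly from the definition of $\leq$ and from reducedness. Reducedness will be used in the following form: if $x^2=0$ then $x=0$, and hence $xy=0$ implies $yx=0$ (by commutativity) and $x\in ann(x)$ only when $x=0$. Throughout, the key device is the annihilator preorder: $r\leq s$ forces $ann(s)\subseteq ann(r)$.

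For (1), I first want a candidate for $a\wedge b$, and I will take the product $ab$. Since $ann(a)\cup ann(b)\subseteq ann(ab)$, we get $ann(a)\subseteq ann(ab)$. If this inclusion is strict, then $ab\leq a$ directly. If instead $ann(ab)=ann(a)$, the elements $ab$ and $a$ lie in the same class $[a]$, and the well-order on that class need not put $ab$ below $a$. This is the main obstacle: $ab$ is a lower bound only up to the annihilator class.

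To overcome it, I will take the meet to be the least element, in the well-order, of a suitable annihilator class. The class I intend to use is $[ab]$, and the argument needs two facts:
\begin{itemize}
\item[(a)] every common lower bound $c$ of $a$ and $b$ satisfies $ann(c)\supseteq ann(ab)$;
\item[(b)] when $ann(c)=ann(ab)$, the element $c$ lies in $[ab]$, so the least element of $[ab]$ is below it.
\end{itemize}

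For (a), suppose $c\leq a$ and $c\leq b$, so $ann(a),ann(b)\subseteq ann(c)$. Let $x\in ann(ab)$, so $xab=0$. Then $xa\in ann(b)\subseteq ann(c)$, giving $xac=0$, so $xc\in ann(a)\subseteq ann(c)$. Hence $xc^2=0$, and therefore $(xc)^2=x\cdot xc^2=0$. Reducedness gives $xc=0$, so $x\in ann(c)$. This proves $ann(ab)\subseteq ann(c)$.

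With (a) and (b) in hand, I still have to check that the least element $m$ of $[ab]$ is itself below $a$ and $b$. If $ann(a)\subsetneq ann(ab)$ there is nothing to do. In the degenerate case $ann(ab)=ann(a)$, we have $[ab]=[a]$, so $m\leq a$ in the well-order. I also need $m\leq b$, which I will get by comparing $ann(b)$ with $ann(ab)$ in the same way. I expect the degenerate equal-annihilator cases, including the case where $m$ must coincide with the predetermined minimum, to take the most care. If this approach stalls, I would instead quote LaGrange and Roy's construction, since the paper allows it.

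For (2), suppose first that $ab=0$. Then $ann(ab)=S$, and since $0$ is the least element, step (1) gives $a\wedge b=0$. Conversely, suppose $a\wedge b=0$, that is, $0$ is the only common lower bound. The plan is to show that $ab\neq 0$ would produce a nonzero common lower bound. Here I rely on the meet computed in (1) lying in $[ab]$, and the class $[ab]$ is different from $[0]$ because $ab\neq 0$. Thus $a\wedge b\neq 0$, a contradiction, so $ab=0$.

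It then follows that $a^{\perp}=\{y\mid a\wedge y=0\}=\{y\mid ay=0\}=ann(a)$. So the two graphs have the same vertex set and the same adjacency relation, and therefore $\Gamma(S)=G(S)$.
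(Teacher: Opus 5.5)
\textbf{Step (b) proves the wrong inequality.} Your lemma (a) is correct and is the right tool. But for $m$ to be $a\wedge b$ you need $c\le m$ for every common lower bound $c$. Showing that the least element of $[ab]$ lies below such $c$ only makes $m$ a small lower bound, not the greatest one, and the least element of $[ab]$ is generally not the meet.

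Your ``degenerate'' case $ann(ab)=ann(a)$ forces $ann(b)\subseteq ann(a)$. So $a,b$ are comparable, and $a\wedge b$ is simply the smaller of the two, which is usually not the least element of $[ab]=[a]$.

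If $a,b$ are incomparable, then $ann(a)$ and $ann(b)$ are $\subseteq$-incomparable. So $ann(ab)\supseteq ann(a)\cup ann(b)$ strictly contains both. Hence \emph{every} element of $[ab]$ is a common lower bound, and by (a) the common lower bounds are exactly the $c$ with $ann(c)\supseteq ann(ab)$. The meet must therefore be the \emph{greatest} element of $[ab]$ in its well-order. As soon as $|[ab]|\ge 2$, the least element is strictly below another lower bound.

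\textbf{A well-order need not have a greatest element, so (1) fails as stated.} Take the monoid $M=\{0\}\cup\{t^n : n\ge 0\}$ with $t^mt^n=t^{m+n}$, and $S=M\times M\times M$ with componentwise product; $S$ is reduced and commutative with $0\ne 1$. Let $a=(1,1,0)$ and $b=(1,0,1)$. The common lower bounds of $a,b$ are exactly the elements $(u,0,0)$ with $u\in M$. If the well-order on $[ab]=\{(t^n,0,0): n\ge 0\}$ has type $\omega$, then $a\wedge b$ does not exist.

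So part (1), for an arbitrary predetermined well-order, is false, and your plan cannot prove it. You need an extra hypothesis, for example $S$ finite or each class's well-order having a largest element. Under such a hypothesis the meet is the smaller of $a,b$ when they are comparable, and $\max[ab]$ otherwise.

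\textbf{Part (2) should not go through the meet.} $\Gamma(S)$ only uses the condition $\{a,b\}^{\ell}=\{0\}$, so argue directly:
\begin{itemize}
\item If $ab=0$, then (a) gives $ann(c)=S$ for every common lower bound $c$. Hence $c^2=0$, and $c=0$ by reducedness.
\item If $ab\neq 0$ (so $a,b\neq 0$), either $a,b$ are comparable and the smaller one is a nonzero common lower bound, or they are incomparable and $ab$ itself is a nonzero common lower bound.
\end{itemize}
This yields $a^{\perp}=ann(a)$ and $\Gamma(S)=G(S)$ with no assumption on the well-orders.
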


In view of the above result, it is evident that a reduced (multiplicative) commutative semigroup with $0$ satisfies the annihilator condition ($a.c.$) if and only if the associated poset derived from $S$ also satisfies $a.c.$ The following result is due to Devhare et al. \cite{DJL}.

\begin{theorem}[Devhare et al. \cite{DJL}, Theorem 3.4]\label{2.2} Let $S$ be a reduced (multiplicative)  commutative semigroup with $0$. If $S$ satisfies $a.c.$, then the partially ordered set $S$ is $0$-distributive.
\end{theorem}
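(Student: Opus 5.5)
We verify $0$-distributivity directly from the definition, working in the order $\leq$ of LaGrange and Roy. The tools are Theorem \ref{2.1}, which says $ab=0$ iff $a\wedge b=0$ and $ann(a)=a^{\perp}$, together with the annihilator condition and reducedness. The aim is to reduce the poset condition to the semigroup equation $t^2=0$.

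\emph{Setup.} Let $x,y,z\in S$ with $\{x,y\}^{\ell}=\{0\}=\{x,z\}^{\ell}$, so $xy=0=xz$. Let $t\in\{x,\{y,z\}^u\}^{\ell}$; we must show $t=0$.

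\emph{Step 1: $t$ kills $y$ and $z$.} Since $t\leq x$, the definition of $\leq$ gives $ann(x)\subseteq ann(t)$: either the inclusion is strict, or $t$ and $x$ lie in the same annihilator class. Hence $y,z\in ann(x)\subseteq ann(t)$, so $ty=tz=0$. By $a.c.$ choose $w$ with $ann(w)=ann(y)\cap ann(z)$; then $t\in ann(w)$.

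\emph{Step 2: an upper bound $u$ of $y,z$ with $ann(u)=ann(w)$.} This is the key step, and I expect it to be the main obstacle, because the well-order inside a single class $[r]$ prevents simply taking $u=w$. I would split into cases.
\begin{enumerate}
\item If $ann(w)\subsetneqq ann(y)$ and $ann(w)\subsetneqq ann(z)$, then $y,z\leq w$, so take $u=w$.
\item Otherwise, without loss of generality $ann(w)=ann(y)$, which forces $ann(y)\subseteq ann(z)$.
\begin{enumerate}
\item If $ann(y)\subsetneqq ann(z)$, then $z\leq y$, so $u=y$ works and $ann(u)=ann(w)$.
\item If $ann(y)=ann(z)$, then $y$ and $z$ lie in one class $[y]$. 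Take $u$ to be the larger of $y$ and $z$ in the predetermined well-order on $[y]$; again $ann(u)=ann(w)$.
\end{enumerate}
\end{enumerate}

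\emph{Step 3: conclude $t=0$.} Since $u\in\{y,z\}^u$ and $t$ is a lower bound of every element of $\{y,z\}^u$, we have $t\leq u$, and hence $ann(u)\subseteq ann(t)$ as in Step 1. But $t\in ann(w)=ann(u)\subseteq ann(t)$, so $t^2=0$. Since $S$ is reduced, $t=0$. Therefore $\{x,\{y,z\}^u\}^{\ell}=\{0\}$, and the poset $S$ is $0$-distributive.

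The only delicate points are the case split in Step 2 and checking the fact that $t\leq x$ implies $ann(x)\subseteq ann(t)$, which holds in both clauses of the definition of $\leq$. The rest is routine.
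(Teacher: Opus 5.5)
Your proof is correct. The paper does not prove this statement itself; it only quotes it from Devhare et al.\ \cite{DJL} and uses it as an input to Theorem \ref{2.4}. So there is no in-paper argument to compare with, and your direct verification from the definition of the LaGrange--Roy order is a self-contained route.

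The steps that carry the weight all check out:
\begin{itemize}
\item In Step 1, $t\le x$ gives $ann(x)\subseteq ann(t)$ under either clause of the definition of $\le$.
\item The translation $\{x,y\}^{\ell}=\{0\}\Rightarrow xy=0$ is exactly Theorem \ref{2.1}(2). It can also be checked directly: if $xy\neq 0$, the least element of the class $[xy]$ in its well-order is a nonzero common lower bound of $x$ and $y$.
\item Step 2 handles the only real obstruction. When $ann(w)$ equals $ann(y)$ or $ann(z)$, whether $w$ lies above $y$ and $z$ depends on the arbitrary well-order. You resolve this by replacing $w$ with $y$, or with the larger of $y,z$ in $[y]$, while keeping $ann(u)=ann(y)\cap ann(z)$.
\item Step 3 then closes with reducedness.
\end{itemize}

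In effect you have isolated the crux of the result: $a.c.$ guarantees an upper bound of $y$ and $z$ whose annihilator is exactly $ann(y)\cap ann(z)$. Once such a bound exists, every element of $\{x,\{y,z\}^u\}^{\ell}$ squares to $0$.
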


Hence, the following result is immediate from Theorem \ref{2.1},  Theorem \ref{2.2}, and Theorem \ref{2.3}.

\begin{theorem}\label{2.4} Let $S$ be a (multiplicative) reduced commutative semigroup with $0$. If $S$ satisfies $a.c$., then $S\in \mathbb{P}^{\ell}_{MFP}$ under the partial order mentioned before Theorem \ref{2.1} and $\Gamma (S) = G(S)$. 
\end{theorem}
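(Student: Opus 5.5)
The argument is a short chaining of earlier results. Let $S$ be a reduced commutative semigroup with $0$ satisfying $a.c.$, ordered by the relation $\leq$ defined before Theorem \ref{2.1}.

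The first step is to invoke Theorem \ref{2.1}(1), which says $(S;\leq)$ is a meet-semilattice with least element $0$. We should check that $0$ really is the least element. Since $ann(0)=S$, for any $r$ either $ann(r)\subsetneqq ann(0)$ or $ann(r)=S$. The equality case happens only when $r=0$, because $r\cdot r=0$ forces $r=0$ in a reduced semigroup. So $0\leq r$ for every $r$.

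Next, Theorem \ref{2.1}(2) gives $ab=0$ if and only if $a\wedge b=0$, that is, $\{a,b\}^{\ell}=\{0\}$. Hence $ann(a)=a^{\perp}$ for every $a$. It follows that $ann(x)\cap ann(y)=ann(z)$ holds exactly when $x^{\perp}\cap y^{\perp}=z^{\perp}$, so the semigroup annihilator condition carries over verbatim to the poset annihilator condition, as remarked before Theorem \ref{2.2}.

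Theorem \ref{2.2} then shows that the poset $S$ is $0$-distributive. Because $S$ is a meet-semilattice, Theorem \ref{2.3} of Pawar and Thakare converts $0$-distributivity into $S\in\mathbb{P}^{\ell}_{MFP}$. Finally, the identification $\Gamma(S)=G(S)$ is part (2) of Theorem \ref{2.1.}: both graphs have the same vertex set, namely the nonzero elements having a nonzero annihilating partner, and the same adjacency, since $ab=0$ if and only if $\{a,b\}^{\ell}=\{0\}$.

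The only delicate point is making sure the hypotheses of the cited theorems match. In particular, the $a.c.$ used in Theorem \ref{2.2} must be the semigroup version, and the transfer between the two versions through $ann(a)=a^{\perp}$ must be stated explicitly. Beyond that, the proof is a direct composition of Theorems \ref{2.1}, \ref{2.2} and \ref{2.3}.
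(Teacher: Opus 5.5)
Your proposal is correct and matches the paper's argument: the paper also treats the result as immediate from Theorem \ref{2.1} (meet-semilattice structure and $\Gamma(S)=G(S)$), Theorem \ref{2.2} ($a.c.$ gives $0$-distributivity) and Theorem \ref{2.3} (for meet-semilattices, $0$-distributivity is equivalent to membership in $\mathbb{P}^{\ell}_{MFP}$). Two details you add are correct but left implicit in the paper: the check that $0$ is the least element (reducedness forces $[0]=\{0\}$), and the transfer of $a.c.$ between the semigroup and poset settings via $ann(a)=a^{\perp}$.
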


In view of Theorem \ref{2.4} and Main Theorem, we have the following result.

\begin{theorem} Let $S$ be a (multiplicative) reduced commutative semigroup with $0$. If $S$ satisfies $a.c.$, then following statements are equivalent;
	\begin{enumerate}
		\item[(1)] $S$ is a quasi-complemented meet-semilattice under the partial order given before Theorem \ref{2.1}.
		\item[(2)] 	for any $x \in S$, there exists $y \in S$ such that $x^{\perp}= y^{\perp \perp}$.
		\item[(3)] for every $x\in S$, there exists $y\in x^{\perp}$ such that $x^{\perp}\cap y^{\perp}=\{0\}$.
		\item[(4)] for every $x\in S$, there exists $y\in x^{\perp}$ such that $\overline{int(V(x))}=\overline{D(y)}$.
		\item[(5)] for every $x\in S$, there exists $y\in S$ such that $V'(y)=V'(x^{\perp})$.
		\item[(6)] $\Gamma(S)$ is complemented.
		\item[(7)] $\Gamma(S)$ is an uniquely complemented.
		\item[(8)] $G(S)$ is complemented.
		\item[(9)] $G(S)$ is an uniquely complemented.
		\item[(10)] $S$ is a weakly quasi-complemented meet-semilattice.
	\end{enumerate}	
	
\end{theorem}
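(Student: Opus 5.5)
The plan is to reduce everything to the Main Theorem, applied to $S$ viewed as a poset under the order defined before Theorem \ref{2.1}. By Theorem \ref{2.4}, since $S$ is reduced and satisfies $a.c.$, we have $S\in\mathbb{P}^{\ell}_{MFP}$ and $\Gamma(S)=G(S)$. The first check is that the $a.c.$ hypothesis transfers to the poset. By Theorem \ref{2.1}(2), $ann(a)=a^{\perp}$ for every $a\in S$, so the semigroup annihilator condition (for $x,y$ there is $z$ with $ann(x)\cap ann(y)=ann(z)$) is literally the poset condition $x^{\perp}\cap y^{\perp}=z^{\perp}$. This is the remark made right after Theorem \ref{2.1}.

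With $S\in\mathbb{P}^{\ell}_{MFP}$ and $a.c.$ in hand, the ``moreover'' clause of the Main Theorem yields the equivalence of its statements $(1)$--$(8)$. These correspond here to items $(1),(2),(3),(4),(5),(6),(7),(10)$.

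Item $(4)$ as stated here asks for $y\in x^{\perp}$, whereas Main Theorem $(4)$ only asks for $y\in Q$, so the two formulations must be reconciled.
\begin{itemize}
\item From $(3)$: Theorem \ref{1.5}, $(2)\Rightarrow(3)$, produces a witness $y$ that already lies in $x^{\perp}$, so the stronger form of $(4)$ holds.
\item Conversely, the stronger form trivially implies Main Theorem $(4)$. Alternatively, the proof of $(3)\Rightarrow(1)$ in Theorem \ref{1.5} shows that any such $y$ automatically lies in $x^{\perp}$.
\end{itemize}
This is the only point needing more than citation, and it is routine.

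Finally, $(6)\Leftrightarrow(8)$ and $(7)\Leftrightarrow(9)$ hold because $\Gamma(S)=G(S)$ as graphs (Theorem \ref{2.1}(2)), with the same vertex sets and the same adjacency, since $ab=0$ iff $a\wedge b=0$. Complementedness and unique complementedness are purely graph-theoretic properties, so they transfer verbatim. Alternatively, $(8)\Leftrightarrow(9)$ follows directly from Theorem \ref{1.11}. Chaining these equivalences gives the equivalence of all ten statements. I expect no step to be a real obstacle; the only care needed is in the translation of $a.c.$ and of the $y\in x^{\perp}$ refinement in $(4)$.
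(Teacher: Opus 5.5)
Your proposal is correct and follows the paper's route. The paper proves this statement in one line, by invoking Theorem \ref{2.4} (which gives $S\in\mathbb{P}^{\ell}_{MFP}$ and $\Gamma(S)=G(S)$) together with the Main Theorem. Your two additions fill in details the paper leaves implicit: you transfer $a.c.$ via $ann(a)=a^{\perp}$, and you use the proof of Theorem \ref{1.5} to check that the witness $y$ in (4) can be taken in $x^{\perp}$.
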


\noindent{\bf Acknowledgments:}
The authors thanks both the referees for their suggestions which improved the presentation of the paper.
	
	The third author is financially supported by the Science and Engineering Research Board (DST) via Project CRG/2022/002184.


\begin{thebibliography}{99}
	\bibitem{AN} D. D.	Anderson and M. Naseer, {\it Beck’s coloring of a commutative ring},	J. Algebra, {\bf 159(2)} (1993), 500–514.
	
	\bibitem{AL} D. F. Anderson and P. S. Livingstone, {\it The zero-divisor graph of a commutative ring}, J. Algebra, {\bf 217(2)} (1999), 434-447.
	
	\bibitem{ALS} D. F. Anderson, R. Levy, J.Shapiro, {\it Zero-divisor graphs, von Newmann regular rings, and Boolean Algebras}, J. Pure Appl. Algebra, {\bf 180(3)} (2003), 221-241.
	
	
	\bibitem{B} I. Beck,  {\it Coloring of commutative rings}, J. Algebra, {\bf 116(1)} (1988), 208-226.
	
	\bibitem{CPRL} C. Bender, P. Cappaert, R. DeCoste, and L. DeMeyer, {\it Complemented zero-divisor graphs associated with finite commutative semigroups}, Comm. Algebra, \textbf{52(7)} (2024), 2852-2867.
	
	\bibitem{BAL} D. Bennis, B. E. Alaoui, and R. L'Hamri,  {\it Rings whose associated extended zero divisor graphs are complemented}, Bull. Korean Math. Soc., {\bf 61(3)} (2024), 763-777.
	
	
	\bibitem{C} W. H. Cornish, {\it quasi-complemented lattices}, Comment. Math. Univ. Carolin. \textbf{15(3)} (1974), 501-511.
	
	\bibitem{AD} A. Das, \textit{Nonzero Component Union-Graph of a Finite Dimensional	Vector Space.}, Linear Multilinear Algebra, \textbf{65(6)}
	(2017), 1276-1287.
	
	\bibitem{DMS} F. DeMeyer, T. McKenzie and K. Schneider, {\it The zero-divisor graph of a commutative semigroup}, Semigroup Forum, {\bf 65(2)} (2002), 206-214.
	
	\bibitem{DJL} S. U. Devhare, Vinayak Joshi and J. D. LaGrange,  {\it On the connectedness of the complement of the zero-divisor graph of a poset}, Quaest. Math., {\bf 42(7)} (2019), 939-951. 
	
	\bibitem{GKJ} P. Gadge, N. Khandekar and Vinayak Joshi, {\it On the comaximal graph of a ring}, AKCE Int. J. Graphs Comb., {\bf 21(2)} (2024), 143-151.
	
	\bibitem{G} G. Gr\"atzer, {\it General Lattice Theory}, Birkh\"auser Basel (1998).
	
	\bibitem{GV} P. A. Grillet and J. C. Varlet, {\it Complementedness conditions in lattices}, Bull. Soc. Roy. Sci. Liège, {\bf 36} (1967), 628-642.
		
	\bibitem{H}	R. Hala\v{s},  {\it Pseudocomplemented ordered sets}, Arch. Math. (Brno), {\bf 29(3-4)} (1993), 153–160.
	
	\bibitem{HJ} R. Hala\v{s} and M. Jukl, {\it On Beck’s coloring of posets}, Discrete Math., {\bf 309(13)} (2009), 4584–4589.
	
		
	\bibitem{Jr}	C. Jayaram, {\it Quasicomplemented semilattices}, Acta Math. Acad. Sci. Hung., {\bf 39} (1982), 39-47.
	
		
	\bibitem{J} Vinayak Joshi, {\it Zero-divisor graph of a poset with respect to an ideal}, Order, {\bf 29(3)} (2012), 499-506.
	
	\bibitem{VK} Vinayak Joshi and M. Khalid, {\it Quassi-complemented posets}, Asian-Eur. J. Math., {\bf 15(10)} (2022), 2250183.
	
	\bibitem{JA 1} Vinayak Joshi  and A. U. Khiste, {\it On the zero-divisor graph of a pm-lattice}, Discrete Math., {\bf 312(12-13)} (2012), 2076-2082.
	
	\bibitem{JA 2} Vinayak Joshi  and A. U. Khiste, {\it  The zero-divisor graphs of Boolean posets}, Math. Slovaca, {\bf 64(2)} (2014), 511–519.
	
	
	\bibitem{VM} Vinayak Joshi and N. D. Mundlik, {\it Baer ideals in $0$-distributive posets}, Asian-Eur. J. Math., {\bf 9(3)} (2016), 1650055 (16 pages).
	
	\bibitem{JW} Vinayak Joshi and B. N. Waphare, {\it Characterizations of $0$-distributive posets}, Math. Bohem., {\bf 130(1)} (2005), 73–80. 
	
	\bibitem{KJG}  G. Kadu, Vinayak Joshi and S. Gonde, {\it On weakly perfect annilating-ideal graphs}, Bull. Aust. Math. Soc., {\bf 104(3)} (2021), 362–372.
	
	\bibitem{KCJ} N. Khandekar, P.J. Cameron and Vinayak Joshi, \textit{Component graphs of vector spaces and zero-divisor graphs of ordered sets}, AKCE Int. J. Graphs Comb.,(2025), https://doi.org/10.1080/09728600.2025.2449683, 1-7.
	
	\bibitem{KJ}  N. Khandekar and Vinayak Joshi, {\it Chordal and perfect zero-divisor graphs of posets and applications to graphs associated with algebraic structures}, Math. Slovaca, {\bf 73(5)} (2023), 1099–1118.	
	
	\bibitem{L}	J. D. LaGrange, {\it Complemented zero divisor graphs and Boolean	rings}, J. Algebra, {\bf 315(2)} (2007), 600-611.
	
	\bibitem{LR}  J. D. LaGrange and K. A. Roy, {\it  Poset graphs and the lattice of graph annihilators}, Discrete Math., {\bf 313(10)} (2013), 1053–1062.
	
	\bibitem{LW} D.	Lu and T. Wu,  {\it The zero-divisor graphs of partially ordered sets and an application to semigroups}, Graphs Combin., {\bf 26(6)} (2010), 793–804.
	
	\bibitem{MSSY} H. Maimani, M. Salimi, A. Sattari, S. Yassemi, {\it Comaximal graph of commutative rings}, J. Algebra, \textbf{319(4)}(2019), 1801-1808.
	
	\bibitem{MJH} N. D. Mundlik, Vinayak Joshi and R. Hala\v{s}, {\it The hull–kernel topology on prime ideals in	posets}, Soft Computing, {\bf 21} (2017), 1653–1665.
	
	\bibitem{NWD} S. K. Nimbhorkar, M. P. Wasadikar and L. Demeyer, {\it  Coloring of meet-semilattices}, Ars Combin., {\bf 84} (2007), 97–104.
	
	
	
	\bibitem{PT 1} Y. S. Pawar and N. K. Thakare,  {\it $0$-Distributive semilattices}, Canad. Math. Bull., {\bf 21(4)}(1978), 469–481.
	
	\bibitem{PT 2}	Y. S. Pawar and N. K. Thakare, {\it The space of minimal prime ideals in a $0$-distributive semilattices}, Period. Math. Hungar., {\bf 13(4)} (1982), 309–319.
	
	\bibitem{SB} P. D. Sharma, S. M. Bhatwadekar, {\it A note on graphical representation of rings}, J. Algebra {\bf 176(1)}, (1995), 124-147.
	
	\bibitem{V} P. V. Venkatanarasimhan, {\it Pseudo-complements in posets}, Proc. Amer. Math. Soc., {\bf 28(1)} (1971), 9–17.
	
	\bibitem{SV} S. Visweswaran, {\it When Is the Complement of the Zero-Divisor Graph of a Commutative Ring Complemented?}, ISRN Algebra, {\bf 4} (2012), 13 pages.
	
	\bibitem{W} D. B. West, {\it Introduction to Graph Theory} (2nd Edition), Prentice Hall, India (2005).

   \bibitem{YW} M. Ye, T. Wu, {\it 	Comaximal ideal graphs of commutative rings}, J. Algebra Appl. 11(6) (2012), Art. ID
   1250114.

	

	
\end{thebibliography}
\end{document}